\setlist[enumerate,1]{label=(\alph*)}
\setlist[enumerate,2]{label=(\roman*), ref=\theenumii.\roman*}
\begin{document}

\newtheorem{thm}{Theorem}[section]
\newtheorem{lem}[thm]{Lemma}
\newtheorem{prop}[thm]{Proposition}
\newtheorem{cor}[thm]{Corollary}
\newtheorem*{conjecture}{Conjecture}
\theoremstyle{remark}
\newtheorem{remark}[thm]{Remark}
\theoremstyle{definition}
\newtheorem{assumption}[thm]{Assumption}
\newtheorem{definition}[thm]{Definition}
\newtheorem{example}[thm]{Example}

\title{On 
$e$-cuspidal pairs of finite groups of exceptional Lie Type}

\date{\today}

\author{Ruwen Hollenbach}
\address{IAZD, Leibniz Universität Hannover,
         30167 Hannover, Germany.}
\email{hollenbach@math.uni-hannover.de}

\keywords{Basic sets, $e$-Harish-Chandra theory, inequalities for
  blocks of finite groups of Lie type}

\subjclass[2010]{20C15,20C20 20C33}

\begin{abstract}
Let $G$ be a simple, simply connected algebraic group of exceptional type defined over $\mathbb{F}_q$ with Frobenius endomorphism $F: G \to G$. Let $\ell \nmid q$ be a good prime for $G$. We determine the number of irreducible Brauer characters in the quasi-isolated $\ell$-blocks of $G^F$. This is done by proving that generalized $e$-Harish-Chandra theory holds for the Lusztig series associated to quasi-isolated elements of $G^{*F}$.

\end{abstract}

\maketitle

\section{Introduction}
\noindent

The results in this paper are part of the author's PhD thesis \cite{HollenbachDiss}. The underlying goal was and still is to show that finite quasi-simple groups do not yield minimal counterexamples to a conjecture that was proposed recently by Malle and Robinson \cite{Malle-Robinson} (referred to as the Malle--Robinson conjecture here). Let $G$ be a finite group and let $\ell$ be a prime dividing the order of $G$. They proposed the following.
\begin{conjecture}[Malle--Robinson, {\cite[Conjecture 1]{Malle-Robinson}}]\label{Malle-Robinson Conjecture}
Let $B$ be an $\ell$-block of $G$ with defect group $D$. Then 
\begin{align*}
l(B) \leq \ell^{s(D)},
\end{align*}
where $l(B)$ denotes the number of irreducible Brauer characters in $B$ and  $s(D)$ denotes the sectional $\ell$-rank of $D$.
\end{conjecture}
In \cite{Malle-Robinson} this conjecture was proved to hold for the blocks of $p$-solvable groups and some quasi-simple finite groups. However, not much is known in the case of finite groups of Lie type in cross-characteristic.
In this article we prove the Malle--Robinson conjecture for  the so-called quasi-isolated $\ell$-blocks (see Section 4) of the finite quasi-simple groups of exceptional Lie type in cross characteristic. The proof involves steps that should be of independent interest. The reason we focus on quasi-isolated blocks is the reduction by Bonnafé-Dat-Rouquier \cite{BDR} of block theoretic questions for groups of Lie type to a quasi-isolated setting.
 \\

Let $G$ be a simple, simply connected algebraic group of exceptional type defined over $\mathbb{F}_q$ with Frobenius endomorphism $F: G \to G$, or let $G$ be a simple, simply connected algebraic group of type $D_4$ defined over $\mathbb{F}_q$ with Frobenius endomorphism $F:G \to G$ such that $G^F=\text{} ^3\!D_4(q)$, where $G^F:=\{g \in G \, \mid \, F(g)=g\}$. It is well-known that then $G^F/Z$, for any $Z \leq Z(G^F)$, is a finite quasi-simple group.

Now, let $\ell \nmid q$ be a good prime for $G$ and further assume that $\ell \neq 3$ if $G^F=\text{}^3\!D_4(q)$.  \\

We start by determining the number of quasi-isolated $\ell$-blocks of $G^F$ and their general structure. By results of Cabanes--Enguehard \cite{Cabanes-Enguehardtwisted}, there is a bijection between the blocks of $G^{F}$ and the $G^F$-conjugacy classes of its $e$-cuspidal pairs. Thus it remains to determine the $G^{F}$-conjugacy classes of $e$-cuspidal pairs corresponding to the quasi-isolated blocks. These pairs and some additional information that is needed in Section 4 can be found in the tables in Section 3. Next, in order to determine $l(B)$ for a given quasi-isolated $\ell$-block $B$ we prove that \textit{generalized $e$-Harish-Chandra theory} (see Section 2) holds in the Lusztig series associated to $B$ (by Theorem \ref{Lusztigseries blocks}). 
This step is carried out case-by-case for the different exceptional types in Section 3 and culminates in the following.
\begin{restatable}{thm}{A}\label{e-Harish-Chandra good prime} Let $G$, $F$, and $\ell$ be as above. If $s \in G^{*F}$ is a semisimple, quasi-isolated $\ell'$-element then generalized $e$-Harish-Chandra theory holds in $\mathcal{E}(G^F,s)$.
\end{restatable}
\noindent
Note that the assertion of Theorem \ref{e-Harish-Chandra good prime} is known to hold when $G$ is of type $F_4$ or $E_8$ and $q>2$ by \cite{EnguehardJD} and  when $e=1$ or $2$ for all finite groups of exceptional Lie type by \cite{Malle-Kessar}. \\
 
In Section 4 we then  prove the Malle--Robinson conjecture for the quasi-isolated $\ell$-blocks of $G^F$. Let $B$ be a quasi-isolated $\ell$-block of $G^F$ with defect group $D$. Theorem 1.1 enables us to determine an explicit basic set for $B$ as follows. By \cite{Cabanes-Enguehardtwisted}, $B$ is parametrized by an $e$-cuspidal pair $(L, \lambda)$ and $\operatorname{Irr}(B)$ contains the set $\mathcal{E}(G^F,(L, \lambda))$ of irreducible constituents of $R_L^G(\lambda)$ (see Section 2). An immediate consequnce of Theorem \ref{e-Harish-Chandra good prime} is that $\mathcal{E}(G^F, (L, \lambda))$ is a basic set for $B$. Hence, $l(B)=|\mathcal{E}(G^F,(L, \lambda))|$. Furthermore, by \cite{Cabanes-Enguehardtwisted} we have $s(Z(L)_\ell^F) \leq s(D)$ where the left side can be determined using the tables in Section 3. In fact, it turns out that $s(Z(L)_\ell^F)$ is enough to verify the Malle--Robinson conjecture for $B$ and  we have the stronger inequality 
\[l(B) \leq s^{s(L)_\ell^F}. \]
Standard Clifford theoretic arguments then imply the Malle--Robinson conjecture for the quasi-isolated blocks of $G^F/Z(G^F)$.

\begin{restatable}{thm}{B} \label{Conjecture good}
Let $G$, $F$, and $\ell$ be as in Theorem \ref{e-Harish-Chandra good prime}. Let $B=b_{G^F}(L,\lambda)$ be a quasi-isolated $\ell$-block of $G^F$. Then $\mathcal{E}(G^F, (L, \lambda))$ is an ordinary basic set for $B$. Moreover, the Malle--Robinson conjecture holds for all quasi-isolated $\ell$-blocks of $G^F$ and of $G^F/Z(G^F)$.
\end{restatable}

Applying \cite[Theorem 7.7]{BDR} then yields the following corollary to Theorem \ref{Conjecture good}.

\begin{restatable}{cor}{C} \label{minimal}
Let $H$ be a finite quasi-simple group of exceptional Lie type. Let $\ell$ be a prime and let $B$ be an $\ell$-block of $H$. Then $B$ is not a minimal counterexample to the Malle--Robinson conjecture for $\ell > 5$ if $H=E_8(q)$ and for $\ell \geq 5$ otherwise.
\end{restatable}

\textit{Acknowledgements.} I would like to thank Jay Taylor for helpful remarks on an earlier version and pointing out some mistakes in the previous tables. I would also like to thank Gunter Malle for his constant support over the course of my PhD and his supervision on this project.

\section{Generalized $e$-Harish-Chandra theory}
Let $G$ be a connected reductive group defined over $\mathbb{F}_q$ with Frobenius endomorphism $F$. We start by recalling some relevant notions and results. Let $e \geq 1$ be an integer. Suppose that $S$ is an $F$-stable torus of $G$ with complete root datum $\mathbb{S}$ (see \cite[Definition 22.10]{Malle-Testerman}). Then $S$ is called an \textbf{$e$-torus} if $|\mathbb{S}|=\Phi_e(X)^a$ for some non-negative integer $a$, where $|\mathbb{S}|$ denotes the order polynomial of $\mathbb{S}$ and $\Phi_e$ denotes the $e$-th cyclotomic polynomial. A Levi subgroup $L$ of $G$ is called \textbf{$e$-split} if $L=C_{G}(S)$ is the centralizer of an $e$-torus $S$ of $G$.

\begin{prop}\label{e-split Levi}
Let $G$ be a connected reductive group defined over $\mathbb{F}_q$ with Frobenius endomorphism $F:G \to G$. If $L$ is an $e$-split Levi subgroup of $G$, then $L=C_{G}(Z^\circ(L)_{\Phi_e})$, where $Z^\circ(L)_{\Phi_e}$ denotes the $\Phi_e$-part of the torus $Z^\circ(L)$.
\end{prop}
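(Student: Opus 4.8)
The plan is to exhibit $L$ as the centralizer of the $\Phi_e$-part of its own connected center by a sandwiching argument. By the definition of an $e$-split Levi subgroup, write $L = C_G(S)$ for some $e$-torus $S$. Since $L$ centralizes $S$ by construction, $S$ lies in the center $Z(L)$; being a torus, $S$ is connected, so in fact $S \subseteq Z^\circ(L)$. Moreover $S$ is $F$-stable, whence $L = C_G(S)$ and hence $Z^\circ(L)$ are $F$-stable, so that the $\Phi_e$-part $Z^\circ(L)_{\Phi_e}$ is well defined.

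The key step is to show $S \subseteq Z^\circ(L)_{\Phi_e}$. For this I would invoke the generic Sylow theory of Brou\'{e}--Malle--Michel: the $\Phi_e$-part of an $F$-stable torus $T$ is its unique maximal $e$-subtorus, and it contains every $e$-subtorus of $T$. This follows from the decomposition of the cocharacter lattice $Y(T)$ into $\phi$-isotypic components, where the action of $F$ on $Y(T)\otimes\mathbb{R}$ is written as $q\phi$ with $\phi$ of finite order. Indeed, an $e$-subtorus corresponds to an $F$-stable direct summand whose associated polynomial order $|\mathbb{S}|$ is a power of $\Phi_e$, and every such summand lies in the $\Phi_e$-isotypic part defining $T_{\Phi_e}$. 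Applying this with $T = Z^\circ(L)$ and the $e$-subtorus $S$ gives $S \subseteq Z^\circ(L)_{\Phi_e}$.

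With this inclusion in hand the two containments are immediate. On the one hand, $S \subseteq Z^\circ(L)_{\Phi_e}$ yields $C_G(Z^\circ(L)_{\Phi_e}) \subseteq C_G(S) = L$. On the other hand, $Z^\circ(L)_{\Phi_e} \subseteq Z^\circ(L) \subseteq Z(L)$ is centralized by all of $L$, so $L \subseteq C_G(Z^\circ(L)_{\Phi_e})$. Combining the two gives $L = C_G(Z^\circ(L)_{\Phi_e})$, as desired.

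I expect the only genuine content to be the key step, namely identifying $S$ as a subtorus of the $\Phi_e$-part of $Z^\circ(L)$; everything else is formal manipulation of centralizers. The one point worth flagging is that $Z^\circ(L)_{\Phi_e}$ may strictly contain $S$, so one cannot simply replace $S$ by $Z^\circ(L)_{\Phi_e}$ at the outset. The sandwiching argument is precisely what absorbs this gap, since both $C_G(S)$ and $C_G(Z^\circ(L)_{\Phi_e})$ are forced to coincide with $L$ regardless of whether the inclusion $S \subseteq Z^\circ(L)_{\Phi_e}$ is proper.
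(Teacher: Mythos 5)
Your proof is correct and follows essentially the same sandwiching argument as the paper: both reduce to the inclusion $S \subseteq Z^\circ(L)_{\Phi_e}$ (which the paper dismisses as ``clear'' and you justify via the $\phi$-isotypic decomposition of the cocharacter lattice) and then trap $C_G(Z^\circ(L)_{\Phi_e})$ between $C_G(S)=L$ and the trivial containment $L \subseteq C_G(Z^\circ(L)_{\Phi_e})$. The only cosmetic difference is that the paper cites $L=C_G(Z^\circ(L))$ for the second inclusion where you use the elementary fact that $L$ centralizes its own center.
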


\begin{proof}
Since $L$ is $e$-split, there exists an $e$-torus $S$ of $G$ such that $L=C_G(S)$. Clearly, $S \subseteq Z^\circ(L)_{\Phi_e}$. Since $L=C_G(Z^\circ(L))$ (see \cite[1.21 Proposition]{Digne-Michel}), we have
\begin{align*}
L=C_G(S) \supseteq C_G(Z^\circ(L)_{\Phi_e}) \supseteq C_G(Z^\circ(L))=L.
\end{align*}
Hence, $L=C_G(Z^\circ(L)_{\Phi_e})$.
\end{proof}

Let $R_{L \subseteq P}^G$ denote Lusztig induction from an $F$-stable Levi subgroup $L$ contained in a parabolic subgroup $P \subseteq G$ to $G$ and let $^*\!R_{L \subseteq P}^G$ denote Lusztig restriction (see \cite[11.1 Definition]{Digne-Michel}). We say that an irreducible character $\chi$ of $G^F$ is \textbf{$e$-cuspidal} if $^*\!R_{L \subseteq P}^G(\chi)=0$ for every $e$-split Levi subgroup $L$ contained in a proper parabolic subgroup $P \subseteq G$. Let $\lambda \in \operatorname{Irr}(L^F)$ for an $e$-split Levi subgroup $L \subseteq G$. Then we call $(L, \lambda)$ an \textbf{$e$-split pair}. We define a binary relation on $e$-split pairs by setting $(M, \zeta) \leq_e (L, \lambda)$ if $M \subseteq L$ and $\langle ^*\!R_{M \subseteq Q}^L(\lambda), \zeta \rangle \neq 0$. Since the Lusztig restriction of a character is in general not a character, but a generalized character, the relation $\leq_e$ might not be transitive. We denote the transitive closure of $\leq_e$ by $\ll_e$. If $(L, \lambda)$ is minimal for the partial order $\ll_e$, we call $(L, \lambda)$ an \textbf{$e$-cuspidal pair} of $G^F$. Moreover, we say $(L, \lambda)$ is a \textbf{proper} $e$-cuspidal pair if $L \subsetneq G$ is a proper $F$-stable Levi subgroup of $G$.  \\

Let $G^*$ be a group in duality with $G$ with respect to an $F$-stable maximal torus $T$ of $G$ (see \cite[13.10 Definition]{Digne-Michel}). By results of Lusztig, $\operatorname{Irr}(G^F)$ is a disjoint union of so-called (rational) Lusztig series $\mathcal{E}(G^F,s)$, where $s$ runs over the $G^{*F}$-conjugacy classes of semisimple elements of the dual group $G^*$ (see \cite[p. 138]{Digne-Michel}). 

The following definition can be found in \cite[2.2.1 Definition]{Enguehard}. Let $s \in G^{*F}$ be semisimple. For an $e$-split pair $(L, \lambda)$ we set $N_{G^F}(L, \lambda):= \{ g \in N_{G^F}(L) \, \mid \, \lambda^g = \lambda \}$ and call $W_{G^F}(L, \lambda):= N_{G^F}(L, \lambda)/L^F$ its \textbf{relative Weyl group}. We say that \textbf{generalized $e$-Harish-Chandra theory} holds in $\mathcal{E}(G^F,s)$ if, for any $\chi \in \mathcal{E}(G^F,s)$ there exists an $e$-cuspidal pair $(L, \lambda)$ of $G^F$, uniquely defined up to $G^F$-conjugacy, and an integer $a \neq 0$ such that
\begin{align*}
^*\!R_{L \subseteq P}^G \chi = a \left(\sum_{g \in N_{G^F}(L)/N_{G^F}(L, \lambda)} \lambda^g \right)
\end{align*}
for every parabolic subgroup $P \subseteq G$ containing $L$. \\

Recall the following classical result about the block theory of finite groups of Lie type.

\begin{thm}[{\cite[2.2 Théorème]{Broue}, \cite[Theorem 3.1]{Hiss}}]\label{Lusztigseries blocks}
Let $s \in G^{*F}$ be a semisimple $\ell'$-element. Then we have the following.\\
\begin{tabular}{rp{13,7cm}}
(a) & The set $
\mathcal{E}_\ell(G^F,s):= \bigcup_{t \in C_{G^*}(s)_\ell^F} \mathcal{E}(G^F,st)$
 is a union of $\ell$-blocks of $G^F$. \\
(b) & Any $\ell$-block contained in $\mathcal{E}_\ell(G^F,s)$ contains a character of $\mathcal{E}(G^F,s)$.
\end{tabular}
\end{thm}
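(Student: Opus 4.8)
The plan is to prove both parts through the theory of central characters, which detects $\ell$-blocks. Recall that two irreducible characters $\chi,\chi'\in\operatorname{Irr}(G^F)$ lie in the same $\ell$-block if and only if their central characters $\omega_\chi,\omega_{\chi'}\colon Z(\mathcal OG^F)\to\mathcal O$ agree after reduction modulo the maximal ideal of $\mathcal O$, where $(\mathcal O,k)$ is a suitably large $\ell$-modular system; moreover, by a theorem of Brauer it suffices to test this congruence on the class sums $\widehat C$ of $\ell$-regular (that is, $\ell'$-)elements. Since $\operatorname{Irr}(G^F)=\bigsqcup_{[u]}\mathcal E(G^F,u)$ runs over the $G^{*F}$-classes of semisimple elements $u$, and $\mathcal E_\ell(G^F,s)$ is exactly the union of those series $\mathcal E(G^F,u)$ whose label $u$ has $\ell'$-part $G^{*F}$-conjugate to $s$, part (a) reduces to the following separation statement: if $\chi\in\mathcal E(G^F,u)$ and $\chi'\in\mathcal E(G^F,u')$ lie in the same $\ell$-block, then the $\ell'$-parts of $u$ and $u'$ are conjugate in $G^{*F}$.

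For the separation statement I would compute the reduced central character on $\ell$-regular classes. Writing each character of $\mathcal E(G^F,u)$ as a $\mathbb Z$-combination of Deligne--Lusztig characters $R_{T}^{G}(\theta)$ attached to pairs $(T,\theta)$ in duality with $u$, the value $\omega_\chi(\widehat C)=|C|\,\chi(g_C)/\chi(1)$ on an $\ell'$-class $C$ is controlled, modulo $\ell$, by the semisimple label through the character formula for $R_{T}^{G}$ evaluated on $\ell$-regular elements. The cleanest way to package this is to show directly that the central function
\begin{equation*}
e_s=\sum_{\chi\in\mathcal E_\ell(G^F,s)}e_\chi,
\end{equation*}
a priori an idempotent of $Z(\mathbb CG^F)$, in fact has $\ell$-integral coefficients, i.e. $e_s\in Z(\mathcal OG^F)$. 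Being a central idempotent of $\mathcal OG^F$, it is then automatically a sum of block idempotents, which is precisely part (a). The integrality of $e_s$ is the heart of the matter and is exactly the computation carried out in \cite{Broue}: one expands the coefficient of $e_s$ at $g=g_\ell g_{\ell'}$, uses the disjointness of Lusztig series together with the behaviour of $R_{T}^{G}$ under the decomposition into $\ell$- and $\ell'$-parts, and shows the resulting rational number is an $\ell$-adic integer.

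Granting (a), part (b) asks that each block idempotent $b$ occurring in $e_s$ meet the distinguished series $\mathcal E(G^F,s)$ (the summand with $t=1$). I would argue by Jordan decomposition: the $t=1$ series corresponds, in the centralizer $C_{G^*}(s)^{F}$, to its unipotent characters, and the remaining series $\mathcal E(G^F,st)$ with $t\neq 1$ are glued to it $\ell$-locally. Concretely, one shows that $\mathcal E(G^F,s)$ already spans, modulo $\ell$, the Grothendieck group of Brauer characters of the whole union $\mathcal E_\ell(G^F,s)$ — a triangularity property of the decomposition matrix with respect to the partition into series, which is the content of \cite[Theorem 3.1]{Hiss}. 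If some block $b\le e_s$ contained no character of $\mathcal E(G^F,s)$, then its modular irreducibles could only be lifted by ordinary characters lying in other blocks, contradicting the vanishing of decomposition numbers across distinct blocks; hence $b$ must contain a character of $\mathcal E(G^F,s)$.

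The main obstacle throughout is the integrality statement for $e_s$ in the second paragraph: the reduction to central characters and the basic-set argument for (b) are essentially formal once it is available, but establishing that the mixed sum over $\ell$-elements $t$ produces $\ell$-adically integral coefficients genuinely requires the fine structure of Deligne--Lusztig characters — in particular their values on $\ell$-sections and the disjointness of the series $\mathcal E(G^F,st)$ — and cannot be bypassed by purely formal block-theoretic manipulations.
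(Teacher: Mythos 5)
This theorem is not proved in the paper at all: it is recalled verbatim from the literature, with part (a) attributed to Brou\'e--Michel and part (b) to Hiss, so the only meaningful comparison is against those cited proofs. For part (a) your strategy is the right one and is essentially Brou\'e--Michel's: reduce to showing that $e_s=\sum_{\chi\in\mathcal E_\ell(G^F,s)}e_\chi$ lies in $Z(\mathcal O G^F)$, whence it is a sum of block idempotents. You correctly identify the $\ell$-adic integrality of $e_s$ as the real content and correctly locate where it comes from (disjointness of the series $\mathcal E(G^F,st)$ and the behaviour of $R_T^G(\theta)$ under splitting $\theta$ into its $\ell$- and $\ell'$-parts), even if you do not carry out the computation.

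Part (b) contains a genuine gap. You deduce it from the claim that the restrictions to $\ell$-regular elements of the characters in $\mathcal E(G^F,s)$ span the Brauer characters of all of $\mathcal E_\ell(G^F,s)$, and you attribute that spanning statement to \cite[Theorem 3.1]{Hiss}. But \cite[Theorem 3.1]{Hiss} \emph{is} statement (b) itself, so as written your argument for (b) is circular. The spanning (ordinary generating/basic set) property is a strictly stronger result --- it is Theorem \ref{Geck} in this paper, due to Geck--Hiss and Geck --- and it requires additional hypotheses ($\ell$ good for $G$ and $\ell\nmid|(Z(G)/Z^\circ(G))_F|$), whereas (b) holds for every $\ell\nmid q$. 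So even granting the citation, your route would only prove (b) under hypotheses not present in the statement, by appeal to a theorem that lies deeper than what is being proved. The actual argument of Hiss is more elementary: since a character $\theta''$ of $T^F$ of $\ell$-power order is trivial on $\ell'$-elements, one has $d^G R_T^G(\hat s\hat t)=d^G R_T^G(\hat s)$ for $t$ an $\ell$-element, and one then compares block projections of these two virtual characters (using that $d^G$ commutes with the projection onto a block and pairing against projective characters) to force every block of $\mathcal E_\ell(G^F,s)$ to meet the constituents of some $R_T^G(\hat s)$, all of which lie in $\mathcal E(G^F,s)$. Your final contradiction step would be fine once a correct, unconditional justification of the link between $\mathcal E(G^F,st)$ and $\mathcal E(G^F,s)$ on $\ell$-regular elements is supplied along these lines.
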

\noindent
Now, $e$-cuspidal pairs yield a refinement  of the result above and enable us to paraemtrize the blocks occuring in the union in Theorem \ref{Lusztigseries blocks} (a).  Let $e_\ell(q)$ denote the multiplicative order of $q$ modulo $\ell$.

\begin{thm}[{\cite[Theorem 4.1]{Cabanes-Enguehardtwisted}}]\label{blocks good l}
Let $G$ be a connected reductive group defined over $\mathbb{F}_q$ with Frobenius endomorphism $F:G \to G$. Let $\ell$ be a good prime for $G$ not dividing $q$. Furthermore, assume that $\ell \neq  3$ if $G^F$ has a component of type $^3\!D_4(q)$. Let $s \in G^{*F}$ be a semisimple $\ell'$-element. If $e=e_\ell(q)$, then we have the following. 
\begin{enumerate}
\item There is a natural bijection
\begin{align*}
b_{G^F}(L,\lambda) \longleftrightarrow (L,\lambda)
\end{align*}
between the $\ell$-blocks of $G^F$ contained in $\mathcal{E}_\ell(G^F,s)$ and the $e$-cuspidal pairs $(L, \lambda)$, up to $G^F$-conjugation, such that $s \in L^{*F}$ and $\lambda \in \mathcal{E}(L^F,s)$, where $b_{G^F}(L, \lambda)$ is the unique block containing the irreducible constituents of $R_L^G(\lambda)$.
\item If $B=b_{G^F}(L, \lambda)$, then $\operatorname{Irr}(B) \cap \mathcal{E}(G^F,s)= \{ \chi \in \operatorname{Irr}(G^F) \mid (L, \lambda) \ll_e (G, \chi) \}$.
\end{enumerate}
\end{thm}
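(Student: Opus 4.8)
The plan is to combine the block-theoretic input of Theorem~\ref{Lusztigseries blocks} with $e$-Harish-Chandra theory, and I would begin with a reduction to unipotent characters. When the centralizer $C_{G^*}(s)$ is a Levi subgroup of $G^*$, the Bonnafé--Rouquier Morita equivalence identifies $\mathcal{E}_\ell(G^F,s)$ with the union of unipotent $\ell$-blocks of a group dual to $C_{G^*}(s)$, compatibly with Lusztig induction and hence with the relation $\ll_e$; the assertion then reduces to the parametrization of unipotent blocks by unipotent $e$-cuspidal pairs, for which the $e$-Harish-Chandra theory of Broué--Malle--Michel is available under the present hypotheses on $\ell$. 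Since this reduction is exactly what breaks down when $s$ is quasi-isolated (so that $C_{G^*}(s)$ need not be Levi), the general statement must be obtained by a direct block-theoretic argument that does \emph{not} presuppose the multiplicity formula defining generalized $e$-Harish-Chandra theory.

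For that direct argument I would first show the map $(L,\lambda)\mapsto b_{G^F}(L,\lambda)$ is well defined, i.e. that the irreducible constituents of $R_L^G(\lambda)$ lie in a single $\ell$-block, and more generally that $(L,\lambda)\ll_e(G,\chi)$ forces $\chi$ into that block. The mechanism is control of central characters modulo $\ell$: because $e=e_\ell(q)$, the $\ell$-part of $|Z^\circ(L)^F|$ is governed by $\Phi_e$, so a single step $\leq_e$ of Lusztig restriction does not change the central character of the ambient block. This pins each $e$-Harish-Chandra series inside one block and gives well-definedness.

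Granting this, surjectivity is immediate: a block $B\subseteq\mathcal{E}_\ell(G^F,s)$ contains some $\chi\in\mathcal{E}(G^F,s)$ by Theorem~\ref{Lusztigseries blocks}(b), and choosing a pair minimal for $\ll_e$ below $(G,\chi)$ produces an $e$-cuspidal $(L,\lambda)$ with $s\in L^{*F}$, $\lambda\in\mathcal{E}(L^F,s)$ and $B=b_{G^F}(L,\lambda)$. For injectivity up to $G^F$-conjugacy I would recover the pair from the local structure of $B$: a defect group of $B$ determines $Z^\circ(L)_{\Phi_e}$ up to conjugacy, whence $L=C_G(Z^\circ(L)_{\Phi_e})$ by Proposition~\ref{e-split Levi}, while $\lambda$ is pinned down as the cuspidal source of $B$. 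Part~(2) then follows formally: the inclusion $\supseteq$ is the containment of the $e$-Harish-Chandra series in $B$ from the previous step, and $\subseteq$ holds because any $\chi\in\operatorname{Irr}(B)\cap\mathcal{E}(G^F,s)$ lies above an $e$-cuspidal pair that must coincide with $(L,\lambda)$ by injectivity.

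The main obstacle is the block-compatibility of Lusztig induction together with the defect-group identification used for injectivity: one needs precise control of how $R_L^G$ interacts with the $\ell$-block decomposition, and this is genuinely delicate when centralizers of semisimple elements are non-Levi or disconnected. The hypotheses that $\ell$ be good and that $\ell\neq 3$ when $G^F={}^3\!D_4(q)$ are what make the defect groups of $e$-cuspidal pairs transparent and guarantee that the $e$-Harish-Chandra partition refines the $\ell$-block partition rather than merely being correlated with it.
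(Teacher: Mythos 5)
This statement is not proved in the paper at all: it is Theorem~4.1 of Cabanes--Enguehard \cite{Cabanes-Enguehardtwisted}, quoted verbatim and used as a black box, so there is no internal proof to compare your proposal against. Judged on its own terms, your sketch is a reasonable roadmap of how such a theorem is established, but it remains a roadmap rather than a proof: the two assertions you yourself flag as ``the main obstacle'' --- that all irreducible constituents of $R_L^G(\lambda)$ for an $e$-cuspidal pair lie in a single $\ell$-block (indeed that $\ll_e$ preserves blocks), and that the pair $(L,\lambda)$ can be recovered from the local structure of $b_{G^F}(L,\lambda)$ --- are precisely the substance of the cited theorem, and your appeals to ``control of central characters modulo $\ell$'' and to ``the cuspidal source of $B$'' do not supply them. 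Equality of central characters on $\ell$-elements only places constituents in the same union of blocks, not in one block, and the injectivity step is the hardest part: recovering $Z^\circ(L)_{\Phi_e}$ from a defect group via Proposition~\ref{e-split Levi} does not yet separate $\lambda$ from another $e$-cuspidal character of the same $L^F$; this requires the full analysis of maximal subpairs and their canonical characters carried out in \cite{Cabanes-Enguehardtwisted}. Two smaller points: the Bonnaf\'e--Rouquier Morita equivalence you invoke postdates the theorem (2003 versus 1999) and is not how the original argument runs, and in any case, as you note, it only disposes of the non-quasi-isolated case, so it cannot carry the main burden.
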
 

\noindent
Throughout this work we will work under the following core assumption.
\begin{assumption} \label{assumption} $G$ is connected reductive, defined over $\mathbb{F}_q$ with Frobenius endomorphism $F:G \to G$, $\ell \nmid q$ is odd and good for $G$ and $e=e_\ell(q)$. If $G^F$ has a component of type $^3\!D_4$ then $\ell \geq 5$. Furthermore, $s \in G^{*F}$ will be a semisimple $\ell'$-element.
\end{assumption}
The following proposition shows how the notion of \textit{generalized $e$-Harish-Chandra theory holding in a Lusztig series} is related to the parametrisation of blocks by the $e$-cuspidal pairs. Let $\mathcal{E}(G^F,(L, \lambda)):= \{ \chi \in \operatorname{Irr}(G^F) \mid (L, \lambda) \leq_e (G, \chi) \}$ be the \textbf{$e$-Harish-Chandra series} associated to an $e$-cuspidal pair $(L, \lambda)$.

\begin{prop}{\cite[Proposition 2.2.2]{EnguehardJD}}\label{Prop:e-Harish-Chandra blockwise} Suppose Assumption \ref{assumption} holds. Then generalized $e$-Harish-Chandra theory holds in $\mathcal{E}(G^F,s)$ if and only if, for any $e$-cuspidal pair $(L, \lambda)$ of $G^F$ with $\lambda \in \mathcal{E}(L^F,s)$, we have
\begin{align*}
\mathcal{E}(G^F,(L, \lambda))=\{ \chi \in \operatorname{Irr}(G^F) \mid (L, \lambda) \ll_e (G, \chi) \}.
\end{align*}
\end{prop}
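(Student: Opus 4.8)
The plan is to play both implications off the block-theoretic partition furnished by Theorem~\ref{blocks good l}. Abbreviate $\mathcal{B}(L,\lambda):=\{\chi\in\operatorname{Irr}(G^F)\mid(L,\lambda)\ll_e(G,\chi)\}$ for the set appearing on the right. Since $\ll_e$ is by definition the transitive closure of $\leq_e$, the inclusion $\mathcal{E}(G^F,(L,\lambda))\subseteq\mathcal{B}(L,\lambda)$ holds trivially, so only the reverse inclusion ever carries content. The structural facts I will lean on are that, by Theorem~\ref{blocks good l}(2), $\mathcal{B}(L,\lambda)=\operatorname{Irr}(b_{G^F}(L,\lambda))\cap\mathcal{E}(G^F,s)$, and that, by Theorem~\ref{blocks good l}(1), these sets are pairwise disjoint as $(L,\lambda)$ ranges over the $e$-cuspidal pairs with $\lambda\in\mathcal{E}(L^F,s)$ up to $G^F$-conjugacy, because non-conjugate such pairs label distinct blocks. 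I will also use freely that $\leq_e$ and the $e$-Harish-Chandra series are stable under $G^F$-conjugacy and that ${}^*\!R_L^G$ carries $\mathcal{E}(G^F,s)$ into $\mathbb{Z}\mathcal{E}(L^F,s)$.

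For the forward direction I assume generalized $e$-Harish-Chandra theory and fix an $e$-cuspidal pair $(L,\lambda)$ with $\lambda\in\mathcal{E}(L^F,s)$. Given $\chi\in\mathcal{B}(L,\lambda)\subseteq\mathcal{E}(G^F,s)$, applying the hypothesis to $\chi$ produces an $e$-cuspidal pair $(M,\mu)$ and the displayed formula for ${}^*\!R_M^G\chi$; the coefficient of $\mu$ in that formula is $a\neq0$, so $(M,\mu)\leq_e(G,\chi)$ and hence $\chi\in\mathcal{B}(M,\mu)$. Now $\chi$ lies in $\mathcal{B}(L,\lambda)\cap\mathcal{B}(M,\mu)$, so these two blocks coincide and Theorem~\ref{blocks good l}(1) makes $(L,\lambda)$ and $(M,\mu)$ $G^F$-conjugate; transporting $(M,\mu)\leq_e(G,\chi)$ along this conjugation yields $(L,\lambda)\leq_e(G,\chi)$, that is, $\chi\in\mathcal{E}(G^F,(L,\lambda))$.

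For the reverse direction I assume the series equality and verify the definition of generalized $e$-Harish-Chandra theory for arbitrary $\chi\in\mathcal{E}(G^F,s)$. Existence and uniqueness of the attached pair are formal: $\chi$ sits in a single block $b_{G^F}(L,\lambda)\subseteq\mathcal{E}_\ell(G^F,s)$, and the assumed equality $\mathcal{B}(L,\lambda)=\mathcal{E}(G^F,(L,\lambda))$ together with the disjointness above both fixes $(L,\lambda)$ up to conjugacy and yields $(L,\lambda)\leq_e(G,\chi)$. The real work is the Lusztig restriction formula. I first claim that every irreducible constituent $\mu$ of ${}^*\!R_L^G\chi$ is $e$-cuspidal in $L$: if not, its $e$-cuspidal support $(M,\nu)$ in $L$ satisfies $M\subsetneq L$, and chaining $(M,\nu)\ll_e(L,\mu)\leq_e(G,\chi)$ gives $(M,\nu)\ll_e(G,\chi)$; the assumed equality then puts $\chi$ into $\mathcal{B}(M,\nu)$ as well as $\mathcal{B}(L,\lambda)$, forcing $(M,\nu)$ and $(L,\lambda)$ to be $G^F$-conjugate, which is impossible for order reasons since $M\subsetneq L$. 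With this in hand each constituent $\mu$ is $e$-cuspidal with $(L,\mu)\leq_e(G,\chi)$, and the same disjointness argument confines $\mu$ to the $N_{G^F}(L)$-orbit of $\lambda$; thus ${}^*\!R_L^G\chi$ is supported on that orbit.

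To finish I must see that the orbit members carry one common nonzero multiplicity. Here I invoke $G^F$-equivariance of Lusztig functors: for $g\in N_{G^F}(L)$ one has $({}^*\!R_{L\subseteq P}^G\chi)^g={}^*\!R_{L\subseteq{}^gP}^G\chi$ since $\chi^g=\chi$, and together with parabolic-independence of ${}^*\!R_{L\subseteq P}^G$ (available under assumption (A)) this shows ${}^*\!R_L^G\chi$ is $N_{G^F}(L)$-stable, so its coefficients are constant along the orbit; the common value $a$ is nonzero because $(L,\lambda)\leq_e(G,\chi)$. As parabolic-independence also makes ${}^*\!R_{L\subseteq P}^G\chi$ the same for every $P$, this is exactly the asserted formula. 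I expect the genuine obstacle to be the claim that the constituents of ${}^*\!R_L^G\chi$ are $e$-cuspidal in $L$: this is the one point where a purely set-theoretic hypothesis about series must be converted, through the block partition of Theorem~\ref{blocks good l}, into control over the virtual character ${}^*\!R_L^G\chi$ itself. The multiplicity step is comparatively soft, though it does depend on parabolic-independence of Lusztig induction.
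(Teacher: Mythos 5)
The paper offers no proof of Proposition \ref{Prop:e-Harish-Chandra blockwise} at all --- it is quoted directly from Enguehard --- so there is no in-paper argument to measure yours against; I can only assess it on its merits, and it is essentially correct. The mechanism you use is the natural one: identify $\{\chi \mid (L,\lambda)\ll_e(G,\chi)\}$ with $\operatorname{Irr}(b_{G^F}(L,\lambda))\cap\mathcal{E}(G^F,s)$ via Theorem \ref{blocks good l}(b), and exploit the disjointness of distinct blocks together with the bijection of Theorem \ref{blocks good l}(a) to force any two $e$-cuspidal pairs lying below the same $\chi$ to be $G^F$-conjugate. Your reduction of the restriction formula in the reverse direction to (i) $e$-cuspidality of the constituents of ${}^*\!R_L^G\chi$ and (ii) $N_{G^F}(L)$-stability of ${}^*\!R_L^G\chi$ is sound. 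Two things you use silently deserve explicit mention. First, in the ``$e$-cuspidal support'' step you need that an $e$-split Levi subgroup of the $e$-split Levi $L$ is again $e$-split in $G$, so that the minimal pair $(M,\nu)$ produced inside $L$ really is an $e$-cuspidal pair of $G$ (with $\nu\in\mathcal{E}(M^F,s)$) to which the block bijection applies; this is standard but is the hinge of the contradiction and should be cited. Second, as you yourself flag, both the well-definedness of $\leq_e$ without reference to a parabolic and the constancy of the multiplicity along the $N_{G^F}(L)$-orbit rest on independence of ${}^*\!R_{L\subseteq P}^G$ from $P$, i.e.\ essentially on the Mackey formula. That is consistent with the paper's own convention of suppressing the parabolic from the notation, but it means your argument, like the general one, does not by itself cover $^2\!E_6(2)$, $E_7(2)$, $E_8(2)$, which the paper has to treat by separate ad hoc arguments in Proposition \ref{Exceptions, Mackey not proved}.
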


\begin{cor} \label{e-Harish-Chandra alternative} Suppose Assumption \ref{assumption} holds. Then generalized $e$-Harish-Chandra theory holds in $\mathcal{E}(G^F,s)$ if and only if 
\begin{align*}
\mathcal{E}(G^F,s)= \dot\bigcup_{(L,\lambda)/G^F} \mathcal{E}(G^F,(L, \lambda)),
\end{align*}
where $(L, \lambda)$ runs over the $G^F$-conjugacy classes of $e$-cuspidal pairs of $G$ with $s \in L^{*F}$ and $\lambda \in \mathcal{E}(L^F,s)$.
\end{cor}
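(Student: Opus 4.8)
The plan is to deduce the corollary from Proposition \ref{Prop:e-Harish-Chandra blockwise} by combining it with the block-theoretic partition furnished by Theorem \ref{blocks good l}, after which only an elementary set-theoretic comparison remains. For each $G^F$-conjugacy class of $e$-cuspidal pairs $(L,\lambda)$ with $s \in L^{*F}$ and $\lambda \in \mathcal{E}(L^F,s)$, I would introduce the abbreviations
\[
A_{(L,\lambda)} := \mathcal{E}(G^F,(L,\lambda)) = \{\chi \in \operatorname{Irr}(G^F) \mid (L,\lambda) \leq_e (G,\chi)\}, \qquad B_{(L,\lambda)} := \{\chi \in \operatorname{Irr}(G^F) \mid (L,\lambda) \ll_e (G,\chi)\}.
\]
Since $\ll_e$ is by definition the transitive closure of $\leq_e$, one has $A_{(L,\lambda)} \subseteq B_{(L,\lambda)}$ for every such pair. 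With this notation, Proposition \ref{Prop:e-Harish-Chandra blockwise} says precisely that generalized $e$-Harish-Chandra theory holds in $\mathcal{E}(G^F,s)$ if and only if $A_{(L,\lambda)} = B_{(L,\lambda)}$ for every indexing pair, so the task is to convert this pointwise equality into the single global identity of the corollary.

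The first substantial step is to record the \emph{unconditional} decomposition
\[
\mathcal{E}(G^F,s) = \dot\bigcup_{(L,\lambda)/G^F} B_{(L,\lambda)}
\]
as a disjoint union. This is exactly what Theorem \ref{blocks good l} provides under assumption (A): part (1) sets up a bijection between the $\ell$-blocks $B$ contained in $\mathcal{E}_\ell(G^F,s)$ and the indexing pairs $(L,\lambda)$, and part (2) identifies $\operatorname{Irr}(B)\cap\mathcal{E}(G^F,s)$ with $B_{(L,\lambda)}$ for $B=b_{G^F}(L,\lambda)$. Because $\mathcal{E}(G^F,s)\subseteq\mathcal{E}_\ell(G^F,s)$ and the latter is a union of $\ell$-blocks by Theorem \ref{Lusztigseries blocks}(a), every $\chi\in\mathcal{E}(G^F,s)$ lies in exactly one such block; as the irreducible characters of distinct blocks are disjoint, the sets $B_{(L,\lambda)}$ genuinely partition $\mathcal{E}(G^F,s)$. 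In particular each $B_{(L,\lambda)}$, and hence each $A_{(L,\lambda)}$, is contained in $\mathcal{E}(G^F,s)$.

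It then remains to prove the set-theoretic equivalence: given a fixed disjoint covering $\mathcal{E}(G^F,s)=\dot\bigcup_j B_j$ and inclusions $A_j\subseteq B_j$, one has $A_j=B_j$ for all $j$ if and only if the $A_j$ likewise form a disjoint covering of $\mathcal{E}(G^F,s)$. The forward implication is immediate by substitution. For the converse I would fix an index $i$ and intersect $B_i$ with the assumed decomposition $\mathcal{E}(G^F,s)=\dot\bigcup_j A_j$, obtaining $B_i=\bigcup_j (B_i\cap A_j)$; since $A_j\subseteq B_j$ and $B_i\cap B_j=\varnothing$ for $j\neq i$, every term with $j\neq i$ vanishes, leaving $B_i=B_i\cap A_i=A_i$. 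Chaining this equivalence with Proposition \ref{Prop:e-Harish-Chandra blockwise} yields the corollary. The step I expect to require the most care is this backward direction, where the disjointness encoded in the symbol $\dot\bigcup$ must be used in earnest: from the inclusions $A_j\subseteq B_j$ alone the equalities $A_j=B_j$ cannot be recovered, and it is exactly the combination of these inclusions with the hard block partition of Theorem \ref{blocks good l} that forces them.
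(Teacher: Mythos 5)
Your proposal is correct and follows essentially the same route as the paper: the unconditional disjoint decomposition of $\mathcal{E}(G^F,s)$ into the sets $\{\chi \mid (L,\lambda) \ll_e (G,\chi)\}$ via Theorem \ref{blocks good l}, the inclusion $\mathcal{E}(G^F,(L,\lambda)) \subseteq \{\chi \mid (L,\lambda) \ll_e (G,\chi)\}$, and the reduction to Proposition \ref{Prop:e-Harish-Chandra blockwise}. You merely spell out the final set-theoretic comparison more explicitly than the paper does.
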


\begin{proof}
By Theorem \ref{blocks good l} we have
\begin{align*}
\mathcal{E}(G^F,s) &= \dot \bigcup_{(L,\lambda)/G^F} \left( \operatorname{Irr}(b_{G^F}(L, \lambda)) \cap \mathcal{E}(G^F,s) \right) \\
				&= \dot \bigcup_{(L,\lambda)/G^F} \{ \chi \in \operatorname{Irr}(G^F) \mid (L, \lambda) \ll_e (G, \chi) \},
\end{align*}
where $(L, \lambda)$ runs over the $G^F$-conjugacy classes of $e$-cuspidal pairs of $G$ with $s \in L^{*F}$ and $\lambda \in \mathcal{E}(L^F,s)$. Since $\mathcal{E}(G^F,(L, \lambda))$ is contained in $\{ \chi \in \operatorname{Irr}(G^F) \mid (L, \lambda) \ll_e (G, \chi) \}$ by definition, the assertion follows from Proposition \ref{Prop:e-Harish-Chandra blockwise}.
\end{proof}

\begin{remark}
Note that, even though the statements about $e$-cuspidal pairs and $e$-Harish-Chandra theory in this section seem like they do not depend on $\ell$, the proofs of these statements heavily rely on $\ell$ satisfying the conditions in Assumption \ref{assumption} as can be seen in the proofs of the results cited in this section.
\end{remark}

The next tool we need in order to prove that generalized $e$-Harish-Chandra theory holds in the various Lusztig series is the \textit{Mackey formula}. Unfortunately it is not yet known if this formula holds in full generality, which forces us to define the following notion. We say that \textbf{the Mackey formula holds} for $G^F$, if for all $F$-stable Levi subgroups $N$ of $G$ the Mackey formula 
\begin{align*}
^*R_{L\subseteq P}^N \circ R_{M \subseteq Q}^N=\sum_x R_{L \cap ^x\!M \subseteq L \cap ^x\!Q}^L \circ ^*\!R_{L \cap ^x\!M \subseteq P \cap ^x\!M}^{^x\!M} \circ \operatorname{ad}x,
\end{align*}
where $x$ runs over a set of representatives of $L^F \setminus \mathcal{S}(L,M,N)^F/M^F$ with $\mathcal{S}(L,M,N)= \{x \in N \mid L \cap \text{}^x\!M \text{ contains a maximal torus of } N \}$, holds for every pair of parabolic subgroups $P$ and $Q$ of $N$ with $F$-stable Levi complements $L$ and $M$ respectively. \\

Since we focus on the case where $G$ is a simple, simply connected algebraic group of exceptional type, the only cases for which the Mackey formula is still open are  $^2\! E_6(2)$ and $E_8(2)$ (see \cite[Theorem]{Bonnafe-Michel} and \cite[Theorem 1.6]{Jay}). Recall that an immediate consequence of the Mackey formula is that Lusztig induction $R_{L \subseteq P}^G$ and restriction $^*\!R_{L \subseteq P}^G$ are independent of the chosen parabolic $P$ containing $L$. Because of this, we will omit the parabolic subgroups from the subscript of Lusztig induction and restriction from now on. \\

The following result by Enguehard may serve as a first indicator of how important the Mackey formula is in the context of this paper. The result yields the assertion of Theorem \ref{e-Harish-Chandra good prime} for the groups of type $F_4$ and the groups of type $E_8$ as long as we assume $q >2$.

\begin{thm}[{\cite[2.2.4 Proposition]{EnguehardJD}}] \label{e-Harish-Chandra connected center}
Suppose that Assumption \ref{assumption} holds. In addition suppose that the centre of $G$ is connected and that the Mackey formula holds for $G^F$. Then generalized $e$-Harish-Chandra theory holds in $\mathcal{E}(G^F,s)$.
\end{thm}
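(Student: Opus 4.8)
The plan is to reduce, via the equivalences already set up, to the case of unipotent characters. By Corollary \ref{e-Harish-Chandra alternative} it suffices to show that $\mathcal{E}(G^F,s)$ is the disjoint union of the $e$-Harish-Chandra series $\mathcal{E}(G^F,(L,\lambda))$, where $(L,\lambda)$ runs over the $G^F$-classes of $e$-cuspidal pairs with $s\in L^{*F}$ and $\lambda\in\mathcal{E}(L^F,s)$. Equivalently, by Proposition \ref{Prop:e-Harish-Chandra blockwise} and the definition of the $e$-Harish-Chandra series, it suffices to prove that for each such pair the one-step series $\{\chi:(L,\lambda)\leq_e(G,\chi)\}$ already coincides with the transitive-closure series $\{\chi:(L,\lambda)\ll_e(G,\chi)\}$. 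One inclusion is immediate from the definition of $\ll_e$, so the entire content is a transitivity statement: any chain witnessing $(L,\lambda)\ll_e(G,\chi)$ must collapse to $(L,\lambda)\leq_e(G,\chi)$.

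First I would pass to unipotent characters by means of Jordan decomposition. Since $Z(G)$ is connected, the centralizer $C_{G^*}(s)$ is connected reductive, so Lusztig's Jordan decomposition furnishes a bijection $\mathcal{E}(G^F,s)\longrightarrow\mathcal{E}(C_{G^*}(s)^F,1)$ onto the unipotent characters of $C_{G^*}(s)^F$. Under the standing assumption that the Mackey formula holds for every $F$-stable Levi subgroup, this bijection commutes with Lusztig induction and restriction (Bonnaf\'e--Michel), and therefore matches $e$-split Levi subgroups, $e$-cuspidal characters and the relation $\leq_e$ in $\mathcal{E}(G^F,s)$ with the analogous data in the unipotent series of $C_{G^*}(s)^F$. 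It thus suffices to establish transitivity, equivalently generalized $e$-Harish-Chandra theory, for the unipotent characters of the connected reductive group $C_{G^*}(s)^F$.

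For unipotent characters this is precisely the generalized $e$-Harish-Chandra theory of Brou\'e--Malle--Michel. Their proof of the partition into $e$-series, of the uniqueness up to conjugacy of the $e$-cuspidal support, and of the multiplicity formula runs on the Mackey formula: it is what allows one to compose the Lusztig restriction functors along a chain of $e$-split Levi subgroups and to keep track of the resulting cross terms. Applying their result to $C_{G^*}(s)^F$ and transporting the conclusion back through Jordan decomposition yields the desired coincidence of the two series, and Corollary \ref{e-Harish-Chandra alternative} then gives generalized $e$-Harish-Chandra theory in $\mathcal{E}(G^F,s)$.

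The main obstacle, and the reason both hypotheses enter, is this transitivity step. Because $^*\!R_{L\subseteq P}^G$ turns a genuine character into a virtual one, in a chain $(M,\zeta)\leq_e(L,\lambda)\leq_e(G,\chi)$ the nonzero multiplicities $\langle{}^*\!R_M^L\lambda,\zeta\rangle$ and $\langle{}^*\!R_L^G\chi,\lambda\rangle$ need not combine to a nonzero $\langle{}^*\!R_M^G\chi,\zeta\rangle$: in principle the remaining constituents of $^*\!R_L^G\chi$ could cancel the wanted contribution of $\zeta$. Excluding this cancellation is exactly what the genuine partition into $e$-Harish-Chandra series provides, and securing that partition needs the Mackey formula (to make $^*\!R_M^L\circ{}^*\!R_L^G={}^*\!R_M^G$ usable and to evaluate the scalar products) together with the connectedness of $Z(G)$ (which yields connected centralizers and hence a Jordan decomposition compatible with all of these Lusztig functors). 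Once transitivity is in hand, $\ll_e$ collapses onto $\leq_e$ on the pairs in question, and the theorem follows.
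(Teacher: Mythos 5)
First, note that the paper does not actually prove this statement in general: it is quoted from Enguehard's \emph{Towards a Jordan decomposition of blocks of finite reductive groups}, Proposition 2.2.4. What the paper does prove, for the groups where it needs the conclusion ($F_4(q)$ and $E_8(q)$), is a re-verification by a completely different, computational route: it lists the $e$-cuspidal pairs explicitly via Theorem \ref{e-cuspidal pairs}, computes $R_L^G(\lambda)$ through the uniform projection $\pi_{uni}$ together with the norm $\|R_L^G(\lambda)\|^2=|W_{G^F}(L,\lambda)|$ supplied by the Mackey formula, and checks by inspection that the resulting constituents exhaust $\mathcal{E}(G^F,s)$, so that Corollary \ref{e-Harish-Chandra alternative} applies. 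Your reduction to Corollary \ref{e-Harish-Chandra alternative} and your diagnosis of the real difficulty (the possible failure of transitivity of $\leq_e$ because Lusztig restriction produces virtual characters) are both correct and match the framework the paper sets up in Section 1.

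The genuine gap is the central step of your argument: the claim that, under the stated hypotheses, Jordan decomposition $\mathcal{E}(G^F,s)\to\mathcal{E}(C_{G^*}(s)^F,1)$ \emph{commutes with Lusztig induction and restriction}, so that $e$-cuspidality and the relation $\leq_e$ can simply be transported to the unipotent side and back. This is not a theorem of Bonnaf\'e--Michel -- their result is the Mackey formula itself -- and it does not follow from the Mackey formula plus connectedness of $Z(G)$. Connectedness of $Z(G)$ gives you a connected $C_{G^*}(s)$ and a canonically normalized Jordan decomposition (Digne--Michel), whose defining compatibilities involve the Deligne--Lusztig characters $R_T^G$, i.e.\ the uniform part, not general $R_L^G$. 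The commutation of Jordan decomposition with Lusztig induction from arbitrary ($e$-split) Levi subgroups is precisely the hard point; it is known only after nontrivial work (and is the main content of Enguehard's proof of 2.2.4, which has to combine the uniform-projection information with the block-theoretic disjointness coming from Theorem \ref{blocks good l} rather than invoke commutation as a black box). As written, your proof assumes the conclusion's essential ingredient. Everything downstream of that step -- applying Brou\'e--Malle--Michel to the unipotent characters of $C_{G^*}(s)^F$ and transporting back -- is fine once the commutation is secured, but you would need to either cite a precise commutation statement covering the $e$-split Levi subgroups that occur, or replace that step by an argument in the spirit of the paper's Section 2 (uniform projections, norms from the Mackey formula, and disjointness of blocks) that avoids it.
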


\noindent
By results of Kessar--Malle \cite{Malle-Kessar}, the assertion of Theorem \ref{e-Harish-Chandra good prime} also holds when $e=1$ or $e=2$ unless $G=E_6$ or $E_7$ and $s$ is semisimple, quasi-isolated of order 6. 
Thus, apart from these two exceptions (which we include in our treatment of $E_6$ and $E_7$ in Section 3), we can focus our attention on the situation where $G^F=E_6(q)$, $^2\!E_6(q)$, $E_7(q)$ or $E_8(2)$ and $e \geq 3$. The proof of Theorem \ref{e-Harish-Chandra good prime} is done case-by-case. However, since we need to tweak our argument slightly when $q=2$, we put that part of the proof at the end of Section 3. \\

For most $e \in \mathbb{N}$ the assertion of Theorem \ref{e-Harish-Chandra good prime} is obviously true. In order to distinguish these cases from the ones for which we need to put in work, we introduce the following notion.

\begin{definition}
For a semisimple element $s \in G^{*F}$ we define $\delta(G^F,s):=\{e \in \mathbb{N} \text{ } | \text{ } \exists \text{ a } \linebreak \text{proper }  e \text{-cuspidal pair } (L, \lambda) \text{ of } G^F \text{ with } \lambda \in \mathcal{E}(L^F,s) \}$. We say an integer $e$ is \textbf{relevant} for a semisimple element $s \in G^{*F}$ if it occurs in $\delta(G^F,s)$. Furthermore, if $e$ is relevant, then we call a proper $e$-cuspidal pair $(L, \lambda)$ of $G^F$ \textbf{relevant}.
\end{definition}
\noindent
The following easy conclusion justifies this terminology.

\begin{prop} \label{Proof wenn e zu gross ist}
Let $G$ be a connected reductive group defined over $\mathbb{F}_q$ with Frobenius endomorphism $F:G \to G$. Let $s \in G^{*F}$ be semisimple. If $e$ is not relevant for $s$, then generalized $e$-Harish-Chandra theory holds in $\mathcal{E}(G^F,s)$.
\end{prop}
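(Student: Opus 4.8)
The plan is to show that the hypothesis collapses the generalized $e$-Harish-Chandra partition of $\mathcal{E}(G^F,s)$ to the trivial one, i.e.\ that every character in the series is already $e$-cuspidal. First I would unwind the definition of relevance: saying that $e$ is not relevant for $s$ means exactly that $G^F$ admits no proper $e$-cuspidal pair $(L,\lambda)$ with $\lambda\in\mathcal{E}(L^F,s)$. Consequently, the only $e$-cuspidal pairs $(L,\lambda)$ carrying the series $s$ (that is, with $s\in L^{*F}$ and $\lambda\in\mathcal{E}(L^F,s)$) are those with $L=G$, namely the pairs $(G,\chi)$ where $\chi\in\mathcal{E}(G^F,s)$ is itself $e$-cuspidal.

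The key step is a descent argument establishing that in fact every $\chi\in\mathcal{E}(G^F,s)$ is $e$-cuspidal. Suppose not. Then there is a proper $e$-split Levi $L_1\subsetneq G$ with ${}^*R_{L_1}^G\chi\neq0$, and choosing an irreducible constituent $\mu_1$ yields $(L_1,\mu_1)\leq_e(G,\chi)$. Because Lusztig restriction respects the partition into Lusztig series, $\mu_1$ lies in $\mathcal{E}(L_1^F,t)$ for some $t\in L_1^{*F}$ that is $G^{*F}$-conjugate to $s$. If $\mu_1$ is not $e$-cuspidal in $L_1^F$ I would repeat the step inside $L_1$, producing a proper $e$-split Levi $L_2\subsetneq L_1$ and a constituent $\mu_2$ of ${}^*R_{L_2}^{L_1}\mu_1$, and iterate. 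Since the Levi subgroups strictly shrink, the process terminates at an $e$-cuspidal character $\mu_k$ of a proper $e$-split Levi $L_k$; by transitivity of the $e$-split condition $L_k$ is a proper $e$-split Levi of $G$, so $(L_k,\mu_k)$ is a proper $e$-cuspidal pair of $G^F$. After conjugating the chain so that its terminal dual label becomes $s$, this is a proper $e$-cuspidal pair in the series of $s$, contradicting non-relevance.

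With the claim proved, the conclusion is immediate from the definition: for each $\chi\in\mathcal{E}(G^F,s)$ the pair $(G,\chi)$ is $e$-cuspidal and ${}^*R_G^G\chi=\chi$, which equals $1\cdot\bigl(\sum_{g\in N_{G^F}(G)/N_{G^F}(G)_\chi}\chi^g\bigr)$, since $N_{G^F}(G)=G^F$ fixes $\chi$ and the sum reduces to the single term $\chi$; moreover this pair is unique, because $e$-cuspidality of $\chi$ forces any pair $(L,\lambda)$ with ${}^*R_L^G\chi\neq0$ to satisfy $L=G$ and $\lambda=\chi$. Hence generalized $e$-Harish-Chandra theory holds in $\mathcal{E}(G^F,s)$. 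Equivalently, one observes that each $e$-Harish-Chandra series $\mathcal{E}(G^F,(G,\chi))$ is the singleton $\{\chi\}$, so that $\mathcal{E}(G^F,s)$ is their disjoint union and the statement follows from Corollary \ref{e-Harish-Chandra alternative} when (A) is in force.

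The step I expect to be the genuine obstacle is bookkeeping rather than conceptual: one must check that the semisimple label $s$ is preserved through the successive restrictions, up to the $G^{*F}$-conjugacy that matches the $G^F$-conjugacy built into the definition of $\delta(G^F,s)$, and that $e$-split Levi subgroups compose transitively so that the terminal pair is $e$-cuspidal in $G^F$ and not merely in $L_1^F$. Both are standard properties of Lusztig restriction and of $e$-split Levi subgroups, but they are precisely the points where care about the chosen representative of the class of $s$ is needed.
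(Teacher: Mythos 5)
Your proposal is correct and follows essentially the same route as the paper: non-relevance forces every character of $\mathcal{E}(G^F,s)$ to be $e$-cuspidal, so the $e$-Harish-Chandra series are singletons $\mathcal{E}(G^F,(G,\chi))=\{\chi\}$ and the conclusion follows (the paper invokes Corollary \ref{e-Harish-Chandra alternative}, you also verify the definition directly). The only difference is that you spell out the descent through proper $e$-split Levi subgroups, with the bookkeeping of Lusztig series and transitivity of $e$-splitness, which the paper compresses into the phrase ``by definition.''
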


\begin{proof} We start with an easy observation: If $\chi \in \mathcal{E}(G^F,s)$ and $(L, \lambda)$ is an $e$-split pair with $(L, \lambda) \ll_e (G, \chi)$, then $\lambda \in \mathcal{E}(L^F,s)$. Since $\ll_e$ is the transitive closure of $\leq_e$, there exists an $e$-split pair $(M, \zeta)$ such that $(L, \lambda) \leq_e (M, \zeta)$ and $(M, \zeta) \leq_e (G, \chi)$. By the definition of $\leq_e$ and \cite[Corollary 6]{Lusztig}, it follows that $\zeta \in \mathcal{E}(M^F,s)$ and then using \cite[Corollary 6]{Lusztig} again it follows that $\lambda \in \mathcal{E}(L^F,s)$. 

For every $\chi \in \mathcal{E}(G^F,s)$ there exists a unique $e$-cuspidal pair $(L, \lambda)$ (up to $G^F$-conjugacy) such that $(L, \lambda) \ll_e (G, \chi)$ by Theorem \ref{blocks good l}, and by our observation we know that $\lambda \in \mathcal{E}(L^F,s)$. Now, if $e$ is not relevant for $s$, then $(L, \lambda)=(G, \chi)$. Hence, every $\chi \in \mathcal{E}(G^F,s)$ is $e$-cuspidal. Since, clearly,
\begin{align*}
\mathcal{E}(G^F,s)= \dot\bigcup_{\chi \in \mathcal{E}(G^F,s)} \mathcal{E}(G^F, (G, \chi))
\end{align*}
the assertion follows by Corollary \ref{e-Harish-Chandra alternative}.
\end{proof}

Next we will show that we can determine the relevant integers by using only unipotent data. This follows mainly from the following result. In fact, we will use this result throughout Section 3 as well.

For every pair $(L,s)$ consisting of an $F$-stable Levi subgroup of $G$ and a semisimple element $s \in L^{*F}$ we fix a Jordan decomposition $J_s^L: \mathcal{E}(L^F,s) \to \mathcal{E}(C_{L^*}^\circ(s),1)$ as in \cite[Proposition 5.1]{Lusztigdisconnected}.

\begin{thm}[{\cite[Theorem 4.2.]{Cabanes-Enguehardtwisted}}] \label{e-cuspidal pairs}
Suppose that Assumption \ref{assumption} holds. Then an element $\chi \in \mathcal{E}(G^F,s)$ is $e$-cuspidal if and only if it satisfies the following conditions. 
\begin{enumerate}
\item $Z^\circ(C_{G^*}^\circ(s))_{\Phi_e}=Z^\circ(G^*)_{\Phi_e}$ and 
\item $J_s^G(\chi)$ is a $C_{G*}(s)^F$-orbit of $e$-cuspidal unipotent character of $C_{G^*}^\circ(s)^F$.
\end{enumerate}
\end{thm}
\noindent 

\begin{prop} \label{intersection levi centraliser} Let $G$ be a connected reductive group, $s \in G$ a semisimple element and $L \subseteq G$ a Levi subgroup of $G$ containing $s$. Then $C_L^\circ(s)=L \cap C_G^\circ(s)$ is a Levi subgroup of $C_G^\circ(s)$ and every Levi subgroup of $C_{G}^\circ(s)$ is of that form. Moreover,  $L \cap C_{G}^\circ(s) \subseteq C_{G}^\circ(s)$ is $e$-split if and only if $L \subseteq G$ is $e$-split. 
\end{prop}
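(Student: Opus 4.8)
The plan is to prove all three assertions at once by identifying $L \cap C_G^\circ(s)$ with the centralizer of a torus inside $H := C_G^\circ(s)$. The two structural facts I would lean on are that $L = C_G(Z^\circ(L))$ (this is \cite[1.21 Proposition]{Digne-Michel}, already invoked in Proposition \ref{e-split Levi}) and, conversely, the standard fact that the centralizer of any subtorus of a connected reductive group is a Levi subgroup of that group, every Levi subgroup arising in this way. The key preliminary observation is that, since $s \in L$ and $Z^\circ(L)$ is central in $L$, the connected torus $Z^\circ(L)$ is centralized by $s$, whence $Z^\circ(L) \subseteq C_G^\circ(s) = H$.

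For the first assertion I would compute $L \cap H = C_G(Z^\circ(L)) \cap H = C_H(Z^\circ(L))$: an element of $H$ lies in $L$ exactly when it centralizes $Z^\circ(L)$. As $Z^\circ(L)$ is a subtorus of the connected reductive group $H$, the subgroup $C_H(Z^\circ(L))$ is a Levi subgroup of $H$, and in particular it is connected, so $L \cap H$ is genuinely a Levi of $H$. For surjectivity, given any Levi subgroup $M$ of $H$, I would write $M = C_H(Z^\circ(M))$ and set $L := C_G(Z^\circ(M))$. Since $Z^\circ(M) \subseteq H$ commutes with $s$, this $L$ is a Levi of $G$ containing $s$, and the same intersection computation yields $L \cap H = C_H(Z^\circ(M)) = M$. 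This disposes of the first two claims.

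For the ``in particular'' I would use the criterion of Proposition \ref{e-split Levi}, namely that a Levi is $e$-split if and only if it is the centralizer of the $\Phi_e$-part of its own connected centre. One direction is immediate: if $L = C_G(S)$ for an $e$-torus $S$, then $S \subseteq Z^\circ(L) \subseteq H$, so $S$ is an $e$-torus of $H$ and $L \cap H = C_H(S)$ is $e$-split in $H$. For the converse I would start from $M = L \cap H = C_H(S')$ with $S' = Z^\circ(M)_{\Phi_e}$ an $e$-torus of $H$, form the $e$-split Levi $C_G(S')$ of $G$ (note $s \in C_G(S')$ because $S' \subseteq H$), and record that $C_G(S') \cap H = C_H(S') = M = L \cap H$.

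The main obstacle is precisely this converse: from $C_G(S') \cap H = L \cap H$ I must still deduce $C_G(S') = L$, i.e. that intersecting with $H$ does not conflate $L$ with the $e$-split Levi $C_G(S')$. The way I would attempt this is to compare connected centres: one always has $Z^\circ(L) \subseteq Z^\circ(M)$, hence $Z^\circ(L)_{\Phi_e} \subseteq Z^\circ(M)_{\Phi_e}$, and I would try to upgrade the hypothesis that $M$ is $e$-split into the statement that $Z^\circ(L)_{\Phi_e}$ and $Z^\circ(M)_{\Phi_e}$ cut out the same Levi of $G$. Here the $F$-stability of $H$ — equivalently, of the subsystem $\Phi(H) = \{\alpha : \alpha(s) = 1\}$ — is essential and must genuinely be used: passing to the cocharacter space with its $F$-action, the $\Phi_e$-part is the intersection with the $q\zeta$-isotypic piece ($\zeta$ a primitive $e$-th root of unity), and I expect the compatibility of this operation with the $F$-stable subsystem $\Phi(H)$ to force the required equality of centralizers. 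This is the step I would treat with the most care, since a naive comparison of centralizers taken in $G$ versus in $H$ is exactly where a spurious discrepancy can otherwise creep in.
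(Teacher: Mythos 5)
Your proof of the first two assertions is correct and follows the paper's own argument almost verbatim: establish $Z^\circ(L)\subseteq C_G^\circ(s)$ (the paper does this by placing $s$ in a maximal torus of $L$ and using that $Z(L)$ lies in every maximal torus of $L$; your argument via centrality of $Z^\circ(L)$ and connectedness is equally valid), rewrite $L\cap C_G^\circ(s)=C_{C_G^\circ(s)}(Z^\circ(L))$ using $L=C_G(Z^\circ(L))$, and for surjectivity send a Levi subgroup $M$ of $C_G^\circ(s)$ to $C_G(Z^\circ(M))$. For the ``in particular'' the paper offers only the sentence that it follows from Proposition~\ref{e-split Levi}, so you are being more careful than the source; and the two implications you do complete --- that $L$ $e$-split implies $L\cap C_G^\circ(s)$ $e$-split, and that every $e$-split Levi subgroup of $C_G^\circ(s)$ equals $L\cap C_G^\circ(s)$ for some $e$-split $L$ --- are exactly what is invoked later in the proof of Proposition~\ref{proper e-cuspidal pairs}.

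The step you flag as the main obstacle, however, is not a gap you can close: the literal converse ``$L\cap C_G^\circ(s)$ $e$-split $\Rightarrow$ $L$ $e$-split'' is false for an arbitrary Levi $L$ containing $s$, because $L\mapsto L\cap C_G^\circ(s)$ is not injective. Take $G=\operatorname{GL}_2$ split over $\mathbb{F}_q$ with $q$ odd, $s=\operatorname{diag}(1,-1)$, so that $C_G^\circ(s)=T$ is the diagonal torus, and let $L=T$, $e=2$. Then $L\cap C_G^\circ(s)=T$ is the whole of $C_G^\circ(s)$, hence trivially $2$-split there, but $Z^\circ(L)_{\Phi_2}=T_{\Phi_2}=1$ and $C_G(T_{\Phi_2})=G\neq T$, so $L$ is not $2$-split in $G$. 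Your own computation already exhibits the failure mode: one gets $C_G(S')\cap C_G^\circ(s)=L\cap C_G^\circ(s)$ with $C_G(S')\supsetneq L$, and no analysis of the $F$-action on the cocharacter space will force $C_G(S')=L$. The resolution is not to upgrade the hypothesis but to weaken the claim: the correct statement, and all that the paper ever uses, is that $L\mapsto L\cap C_G^\circ(s)$ carries the $e$-split Levi subgroups of $G$ containing $s$ onto the $e$-split Levi subgroups of $C_G^\circ(s)$ --- which is precisely what you proved.
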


\begin{proof}
As a semisimple element, $s$ lies in at least one maximal torus $S$ of $L$, which then is a maximal torus of $G$. Now, $Z(L)$ lies in every maximal torus of $L$. In particular, $Z(L)$ lies in $S$. In other words, we have $Z(L) \subseteq S \subseteq  C_G^\circ(s)$.
As $L=C_G(Z^\circ(L))$ (see \cite[1.21 Proposition]{Digne-Michel}), we have $L \cap C_G^\circ(s)=C_{C_G^\circ(s)}(Z^\circ(L))$. Since $Z^\circ(L)$ is a torus of $C_G^\circ(s)$, $C_{C_G^\circ(s)}(Z^\circ(L))$ is a Levi subgroup of $C_G^\circ(s)$, proving the first part.

Let $M$ be a Levi subgroup of $C_{G}^\circ(s)$.  Then $M\!=\!C_{C_G^\circ(s)}(Z^\circ(M))$. Now, $L=C_{G}(Z^\circ(M))$ is a Levi subgroup such that $M=L \cap C_{G}^\circ(s)$. The second part follows from Proposition \ref{e-split Levi}. 
\end{proof}

\begin{prop}\label{proper e-cuspidal pairs} 
Suppose that Assumption \ref{assumption} holds with $s \in G^{*F}$  a semisimple, quasi-isolated $\ell'$-element. Then $\delta(G^F,s)= \delta(C_{G^*}^\circ(s)^F,1)$.
\end{prop}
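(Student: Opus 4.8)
The plan is to transport the comparison into the centraliser $H:=C_{G^*}^\circ(s)$ by combining the Jordan decomposition of $e$-cuspidal pairs provided by Theorem \ref{e-cuspidal pairs} with the correspondence of Levi subgroups in Proposition \ref{intersection levi centraliser}. Given an $e$-split Levi $M$ of $H$, Proposition \ref{e-split Levi} yields $M=C_H(Z^\circ(M)_{\Phi_e})$, so that $L^*:=C_{G^*}(Z^\circ(M)_{\Phi_e})$ is an $e$-split Levi of $G^*$ with $C_{L^*}^\circ(s)=L^*\cap H=M$ and $Z^\circ(L^*)_{\Phi_e}=Z^\circ(M)_{\Phi_e}$. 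Invoking Theorem \ref{e-cuspidal pairs} inside $L$, a character $\lambda\in\mathcal{E}(L^F,s)$ is $e$-cuspidal if and only if $M$ supports an $e$-cuspidal unipotent character, condition (1) of that theorem being automatic from $Z^\circ(L^*)_{\Phi_e}=Z^\circ(M)_{\Phi_e}$. This sets up a correspondence between $e$-cuspidal pairs $(L,\lambda)$ of $G^F$ with $\lambda\in\mathcal{E}(L^F,s)$ and $e$-cuspidal unipotent pairs of $H^F$; the one remaining point is that the word ``proper'' need not transfer through it.

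The inclusion $\delta(H^F,1)\subseteq\delta(G^F,s)$ is then immediate and uses no hypothesis on $s$: if $M\subsetneq H$ is a proper $e$-split Levi carrying an $e$-cuspidal unipotent character, then $Z^\circ(M)_{\Phi_e}\supsetneq Z^\circ(H)_{\Phi_e}\supseteq Z^\circ(G^*)_{\Phi_e}$ forces $Z^\circ(M)_{\Phi_e}\not\subseteq Z(G^*)$, so $L=C_{G^*}(Z^\circ(M)_{\Phi_e})$ is proper and $(L,\lambda)$ is a proper $e$-cuspidal pair in $\mathcal{E}(L^F,s)$. For the reverse inclusion, start from a proper $(L,\lambda)$ with $\lambda\in\mathcal{E}(L^F,s)$ and set $M=C_{L^*}^\circ(s)$; then $M\subsetneq H$ exactly when $Z^\circ(M)_{\Phi_e}\supsetneq Z^\circ(H)_{\Phi_e}$, and since $Z^\circ(M)_{\Phi_e}=Z^\circ(L^*)_{\Phi_e}\supsetneq Z^\circ(G^*)_{\Phi_e}$ this holds as soon as condition (1) of Theorem \ref{e-cuspidal pairs} holds for $(G,s)$, i.e. $Z^\circ(H)_{\Phi_e}=Z^\circ(G^*)_{\Phi_e}$. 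The only obstruction is the degenerate situation $M=H$, which can occur solely when this condition fails, and which forces $s$ to be non-isolated.

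Everything therefore reduces to the statement, call it $(\star)$: if $Z^\circ(H)_{\Phi_e}\neq Z^\circ(G^*)_{\Phi_e}$ then $e\in\delta(H^F,1)$. Since $e\in\delta(H^F,1)$ is equivalent to the Sylow $\Phi_e$-torus of $H$ being non-central — equivalently, to $\Phi_e$ dividing the order polynomial of $[H,H]$ — the content of $(\star)$ is that every $\Phi_e$ occurring in $Z^\circ(H)$ beyond $Z^\circ(G^*)$ already occurs in the semisimple part $[H,H]$. I expect $(\star)$ to be the main obstacle. It is genuinely false without a hypothesis forcing $[H,H]$ to be large — it fails already when $H$ is a torus — so quasi-isolatedness must enter essentially, through the identity $(Z^\circ(H)^{A(s)})^\circ=Z^\circ(C_{G^*}(s))=Z^\circ(G^*)$: the component group $A(s)$ acts on $Z^\circ(H)$ with connected fixed-point torus exactly $Z^\circ(G^*)$, so any new cyclotomic factor of $Z^\circ(H)$ is forced by a non-trivial $A(s)$-action.

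The way I would finish is to exploit that this same $A(s)$-action also permutes the simple factors of $[H,H]$. Writing $F=q\sigma$ on $X(Z^\circ(H))$ with $\sigma$ of finite order, the new factors $\Phi_e$ correspond to eigenvalues $q\zeta$ of $F$ with $\zeta\neq 1$, hence to genuine cycles of $\sigma$; the corresponding cycles on the components of $[H,H]$ fuse them into simple factors defined over $\mathbb{F}_{q^{d}}$, whose order polynomials contain precisely these $\Phi_e$. Concretely this must be checked against Bonnafé's list of centralisers $C_{G^*}^\circ(s)$ (Proposition \ref{Bonnafe}); the delicate cases are the non-isolated ones with $A(s)\neq 1$ and a small semisimple part, above all $C_{G^*}^\circ(s)$ of type $A_1^4$ in $E_6$, where one must verify that the triality twist responsible for a factor $\Phi_3$ or $\Phi_6$ of $Z^\circ(H)$ simultaneously fuses three of the $A_1$-factors into an $A_1(q^{3})$ and thus puts the same $\Phi_e$ into $[H,H]$. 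Carrying out this coupling uniformly, using only the data recorded in Table \ref{tabu:quasi-isolated elements}, is the crux of the argument.
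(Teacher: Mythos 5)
Your reduction is set up correctly, and the inclusion $\delta(C_{G^*}^\circ(s)^F,1)\subseteq\delta(G^F,s)$ is complete and agrees with the paper's argument for that direction. But for the reverse inclusion the proposal stops exactly at the point you yourself flag: the key statement $(\star)$ is never proved, only a strategy for it is sketched ("carrying out this coupling uniformly \dots is the crux"). Since $(\star)$ is precisely where quasi-isolatedness enters your argument, what you have written is a reduction, not a proof. I will add that your instinct about where the difficulty sits is correct in a stronger sense than you may realize: the degenerate situation $M=C_{L^*}^\circ(s)=C_{G^*}^\circ(s)$ genuinely occurs for quasi-isolated, non-isolated $s$. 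For instance, for $s$ of order $3$ in $E_6$ with $C_{G^*}(s)^F=\Phi_3.{}^3\!D_4(q).3$ and $e=3$, the $3$-cuspidal pair in line 7 of Table \ref{Table:Kessar-Malle E_6 good l} has $L^*$ equal to the $D_4$-Levi, which contains (indeed equals) $C_{G^*}^\circ(s)$. So the case distinction you introduce is unavoidable, and $(\star)$ must actually be verified — for the finitely many non-isolated quasi-isolated classes with $A(s)\neq 1$ in Table \ref{tabu:quasi-isolated elements} and the relevant $e$ — before the argument closes.

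For comparison, the paper's proof takes a different and shorter route at this step: it asserts that quasi-isolatedness of $s$ forces $C_{G^*}^\circ(s)\not\subset L^*$, so that $C_{L^*}^\circ(s)$ is always a \emph{proper} $e$-split Levi of $C_{G^*}^\circ(s)$ and no analogue of $(\star)$ is needed. Note, however, that this assertion is literally the definition of $s$ being \emph{isolated}, not quasi-isolated, and the example above shows it can fail; so the paper's shortcut is itself delicate for the non-isolated classes, and your more careful case split is the honest way to organize the argument. The verdict nonetheless stands: as written, your proposal is missing its central lemma, and the heuristic you give for $(\star)$ (new cyclotomic factors of $Z^\circ(H)$ arise from a nontrivial twist which simultaneously fuses simple factors of $[H,H]$) is a plausible mechanism but not yet an argument.
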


\begin{proof}
Let $(L, \lambda)$ be a proper $e$-cuspidal pair of $G$ with $\lambda \in \mathcal{E}(L^F,s)$. Let $L^*$ denote the dual of $L$ in $G^*$. To prove the assertion we show that Jordan decomposition yields a $C_{L^*}(s)^F$-orbit of proper unipotent $e$-cuspidal pairs.

By \cite[Proposition 1.4]{Cabanes-Enguehardunipotent2}, $L^*$ is $e$-split. Hence $L^*=C_{G^*}(Z^\circ(L^*)_{\Phi_e})$ by Proposition \ref{e-split Levi}. Since $s \in G^*$ is quasi-isolated, we know that $C_{G^*}^\circ(s) \not\subset L^*$. It follows that $C_{L^*}^\circ(s)=L^* \cap C_{G^*}^\circ(s) \subsetneq C_{G^*}^\circ(s)$ is a proper subgroup of $C_{G^*}^\circ(s)$. Furthermore, by Proposition \ref{intersection levi centraliser}, $C_{L^*}^\circ(s)$ is an $e$-split Levi subgroup of $C_{G^*}^\circ(s)$. Moreover, $\lambda$ corresponds to a $C_{L^*}(s)^F$-orbit of $e$-cuspidal unipotent characters of $C_{L^*}^\circ(s)^F$ by condition (ii) of Theorem \ref{e-cuspidal pairs}. Hence, $\delta(G^F,s) \subseteq \delta(C_{G^*}^\circ(s)^F,1)$. 

Conversely, let $(M, \chi)$ be a proper $e$-cuspidal pair of $C_{G^*}^\circ(s)$ with $\chi \in \mathcal{E}(C_{G^*}^\circ(s),1)$. By Proposition \ref{intersection levi centraliser}, there is a proper $e$-split Levi $L^* \subseteq G^*$ such that $L \cap C_{G^*}^\circ(s)=M$. If $\lambda$ is the character in $\mathcal{E}(L^F,s)$ mapped to $\chi$ by Jordan decomposition, then $(L, \lambda)$ is an $e$-cuspidal pair by Theorem \ref{e-cuspidal pairs}.
\end{proof}

This is helpful because it is very easy to determine the relevant $e$'s on the unipotent side of things with the help of CHEVIE \cite{Chevie}.

This idea of reducing computations to the unipotent side is a recurring theme in the representation theory of finite groups of Lie type and in this article.

\section{relevant $e$-cuspidal pairs}
\noindent
Let $G$ be a simple, simply connected algebraic group of exceptional type defined over $\mathbb{F}_q$ with Frobenius endomorphism $F: G \to G$ or let $G$ be simple, simply connected of type $D_4$ defined over $\mathbb{F}_q$ with Frobenius endomorphism $F:G \to G$ such that $G^F=\text{} ^3\!D_4(q)$.  \\

In this section we prove Theorem \ref{e-Harish-Chandra good prime}. Recall that an element $s$ of a connected reductive group $G$ is called \textbf{quasi-isolated} if $C_{G}(s)$ is not contained in any proper Levi subgroup $L \subsetneq G$. If even $C_{G}^\circ(s)$ is not contained in any proper Levi subgroup $L \subsetneq G$, then $s$ is called \textbf{isolated}. For the reader's convenience we restate the classification of the quasi-isolated elements here (see \cite[Proposition 4.3 and Table 3]{Bonnafe}).

\begin{prop}[Bonnafé] \label{Bonnafe}
Let $G$ be a simple, exceptional algebraic group of adjoint type. Then the conjugacy classes of semisimple, quasi-isolated elements $1 \neq s \in G$, their orders, the type of their centraliser $C_G(s)$, and the group of components $A(s):=C_G(s)/C_G^\circ(s)$ are as given in Table \ref{tabu:quasi-isolated elements}. 
\end{prop}
\noindent
The order of $s$ is denoted by $o(s)$.

\begin{longtable}{|c|c|l|c|l|} 
\caption{Quasi-isolated elements in exceptional groups} \\ \hline
\label{tabu:quasi-isolated elements}
$G$	&	$o(s)$		&	$C_G^\circ(s)$	&	$A(s)$	&	isolated? \\ \hline  \hline
$G_2$	&	2	&	$A_1 \times A_1$	&	1	&	yes \\
	&	3	&	$A_2$	&	1	&	 yes \\
$F_4$	&	2	&	$C_3 \times A_1, B_4$	&	1	&	yes\\
	&	3	&	$A_2 \times A_2$	&	1 	&	yes\\
	&	4	&	$A_3 \times A_1$	&	1 	&	yes\\
$E_6$	&	2	&	$A_5 \times A_1$	&	1 &	 yes\\
	&	3	&	$A_2 \times A_2 \times A_2$	&	3	&	yes\\
	&	3	&	$D_4$	&	3	&	no \\
	&	6	&	$A_1 \times A_1 \times A_1 \times A_1$	&	3 	&	no \\
$E_7$ &	2	&	$D_6 \times A_1$	&	1 &	yes\\
	&	2	&	$A_7$	&	2	&	yes \\
	&	2	&	$E_6$	&	2	&	no \\
	&	3	&	$A_5 \times A_2$	 &	1 	&	yes\\
	&	4	&	$A_3 \times A_3 \times A_1$	&	2	&	yes \\
	&	4	&	$D_4 \times A_1 \times A_1$	&	2	&	no	\\
	&	6	&	$A_2 \times A_2 \times A_2$	&	2 	&	no \\
$E_8$	&	2	&	$D_8, E_7 \times A_1$	&	1	&	yes \\
	&	3	&	$A_8, E_6 \times A_2$	&	1 	&	yes	\\
	&	4	&	$D_5 \times A_3, A_7 \times A_1$	&	1	&	yes \\
	&	5	&	$A_4 \times A_4$	 &	1	&	yes \\
	&	6	&	$A_5 \times A_2 \times A_1$	&	1	&	yes	 \\ \hline
\end{longtable}

In order to prove Theorem 1.1 we first determine all relevant $e$-cuspidal pairs for quasi-isolated semisimple elements of $G$. For a given $e$-cuspidal pair $(L, \lambda)$ of $G^F$ we then determine the constituents of $R_L^G(\lambda)$. This can be done ad-hoc for each individual $e$-cuspidal pair. With the help of new results from the most recent book \cite{Geck-Malle} of M. Geck and G. Malle, determining the constituents of $R_L^G(\lambda)$ is immediate in all cases for the groups of type $G_2$, $F_4$ and $E_8$: \\

The following statement is contained in \cite[Theorem
 4.7.5]{Geck-Malle}.

\begin{thm}{\cite[Theorem 4.7.5]{Geck-Malle}} \label{Jordan decomposition commutes connected centre}
Let $G$ be simple with connected center, $F:G \to G$ a Steinberg endomorphism, and assume that the Mackey formula holds for $G^F$. Let $s \in G^{*F}$ be a semisimple element. Then for all $F$-stable Levi subgroups $L^* \subseteq M^* \subseteq G^*$ satisfying $s \in L^*$, with duals $L \subseteq M \subseteq G$, the diagram
\begin{center}
\begin{tikzcd}
\mathbb{Z} \mathcal{E}(M^F,s)\arrow[r, "J_s^M"]&  \mathbb{Z}\mathcal{E}(C_{M^*}(s)^F,1)  \\ \mathbb{Z} \mathcal{E}(L^F,s) \arrow[u, "R_L^M"] \arrow[r, "J_s^L"]& \mathbb{Z} \mathcal{E}(C_{L^*}(s)^F,1) \arrow[u, "R_{C_{L^*}(s)}^{C_{G^*}(s)}"]
\end{tikzcd}
\end{center}
commutes, except possibly when $G=M$ is of type $E_8$, $C_{G^*}(s)$ is of type $E_6 \times A_2$ or $E_7 \times A_1$ and $C_{L^*}(s)$ has a factor of type $E_6$ or $E_7$.
\end{thm}

If $G$ and $s \in G^{*F}$ are as in Theorem \ref{Jordan decomposition commutes connected centre} and $(L,\lambda)$ is an $e$-cuspidal pair of $G$, then
\[J_s^G(R_L^G(\lambda)=R_{C_{L*}(s)}^{C_{G*}(s)}(J_s^L(\lambda))\]
Since $G$ has connected center, $J_s^G$ is a bijection and therefore $R_L^G(\lambda)$ is uniquely determined by the right-hand side, which can be easily computed with CHEVIE \cite{Chevie}. \\

\subsection{The tables}
The tables are structured as follows. The first column numbers the different (rational) Lusztig series parametrized by the ($G^{*F}$-conjugacy classes of) semisimple quasi-isolated elements of $G^{*F}$ for a given $e$. Let $s$ be such a quasi-isolated element. The second column gives $C_{G^*}(s)^F$.  The fourth column gives the $e$-split Levi $L$ of $G$ such that $(L, \lambda)$ is an $e$-cuspidal pair of $G^F$, where $\lambda$ is one of the characters (if there are multiple ones) in the sixth column. The fifth column contains $C_{L^*}(s)^F$. The second to last column contains the order of the relative Weyl groups $W_{G^F}(L, \lambda):= N_{G^F}(L, \lambda)/L^F$. These entries are especially important since $|W_{G^F}(L, \lambda)|=||R_L^G(\lambda)||^2$, which is needed to determine $R_L^G(\lambda)
$ in the nontrivial cases (i.e. when $\lambda$ is not uniform and Theorem \ref{Jordan decomposition commutes connected centre} can not be applied). The last column gives the number of irreducible Brauer characters in the block corresponding to the $e$-cuspidal pair $(L, \lambda)$. \\
 
Given a number $i$, the information in row $i$  is computed as follows. Let $s \in G^{*F}$ be the quasi-isolated element associated with $i$. First we determine the unipotent $e$-cuspidal pairs of $C_{G^*}^\circ(s)^F$. By early work of Lusztig, information on unipotent characters is readily available and the unipotent $e$-cuspidal pairs can easily be computed using CHEVIE \cite{Chevie}. Next we compute the $e$-cuspidal pairs of $G^F$. The proof of Proposition \ref{proper e-cuspidal pairs} shows that every $e$-cuspidal pair $(L, \lambda)$ of $G^F$ is a lift of a unipotent $e$-cuspidal pair $(L_s, \lambda_s)$ of $C_{G^*}^\circ(s)^F$, such that $L_s=C_{L^*}^\circ(s)$ and  $\lambda_s \in J_s^L(\lambda)$. In particular, if $C_{L^*}(s)^F \neq C_{L^*}^\circ(s)^F$, there may be multiple unipotent $e$-cuspidal pairs that lift to $(L, \lambda)$, namely all pairs of the form $(C_{L^*}^\circ(s)^F, \mu)$ with $\mu \in J_s^L(\lambda)$. Hence, we need to study the $C_{L^*}(s)^F$-orbits of the unipotent $e$-cuspidal characters of $C_{L^*}^\circ(s)^F$. It turns out that the only non-trivial orbits occur when $C_{L^*}(s)$ is disconnected and of type $A_1 \times A_1$ in the case where $G$ is of type $E_7$. The computation of the relative Weyl  group is fairly straighforward. Apart from a few exceptions we have 
\begin{align}
W_{G^F}(L, \lambda)=N_{G^F}(L, \mathcal{E}(L^F,s))/L^F,
\end{align}
which can be computed using \cite[(2.2.5.2)]{EnguehardJD}.
By \cite[Proposition 1.9 (i)]{Cabanes-Enguehardtwisted} we know that $W_{G^F}(L, \lambda) \subseteq N_{G ^F}(L^F, \bar{\mathcal{E}}(L^F,s))/L^F$, where $\bar{\mathcal{E}}(L^F,s)$ denotes the geometric Lusztig series (see \cite[Definition 13.16]{Digne-Michel}). When $C_{L^*}(s)^F=C_{L^*}^\circ(s)^F$, geometric and rational Lusztig series coincide, i.e. we have  $W_{G^F}(L, \lambda) \subseteq N_{G ^F}(L^F, \mathcal{E}(L^F,s))/L^F$. If, in addition, the degree of $\lambda$ is unique in $\mathcal{E}(L^F,s)$, (1) follows from the fact that degrees are invariant under conjugation. When $C_{L^*}(s)^F\ \neq C_{L^*}^\circ(s)^F$, it turns out that two $e$-cuspidal characters in $\mathcal{E}(L^F,s)$ have the same degree if and only if they are send to the same orbit via Jordan decomposition. In particular, (1) then follows from \cite[Proposition 2.4.2 (a)]{EnguehardJD}.

Exceptions to (1) only occur when $C_{L^*}(s)^F=C_{L^*}^\circ(s)^F$ involves  $A_1(q)^2$. Here, we sometimes have characters of the form $\lambda \otimes \phi_{11} \otimes \phi_{2}$ which are conjugate to $\lambda \otimes \phi_2 \otimes \phi_{11}$. However, in all theses cases it can be explicitly checked whether or not they are actually conjugate. \\

In addition to the Tables \ref{table:Quasi-isolated blocks F4 good l}, \ref{Table:Kessar-Malle E_6 good l}, \ref{Table: Quasi-isolated blocks of E_7(q) good l}, \ref{Table: Quasi-isolated blocks of E_8(q) good l} and \ref{Table: Quasi-isolated blocks of G_2(q) and 3D4(q) good l}, Tables \ref{table:Lusztig Induction E6 good l} and \ref{table:Lusztig Induction E7 good l} contain the decomposition of $R_L^G(\lambda)$ into its irreducible constituents for every $e$-cuspidal pair $(L, \lambda)$ for which $R_L^G( \lambda)$ is not uniform (defined later) or cannot be determined using Theorem \ref{Jordan decomposition commutes connected centre}. 
The constituents of $R_L^G(\lambda)$ are parametrized via Jordan decomposition. 
Since the semisimple element will always be clear from the context, we omit it from the parametrization and denote every irreducible constituent by the corresponding unipotent character. Except for the unipotent characters of classical groups (where we use the common notation using partitions and symbols), we use the notation of CHEVIE \cite{Chevie}. \\

It should be noted that we do not include tables for every relevant integer $e$. The missing tables, however, are Ennola duals of the ones in this section and they can be obtained fairly easily. This follows from the fact that Ennola duality of finite groups of Lie type interacts nicely with Lusztig induction and restriction (see \cite{BMM} and especially \cite[3.3 Theorem]{BMM}). The Ennola dual cases are $e=1 \leftrightarrow e=2$, $ e=3 \leftrightarrow e=6$, $e=5 \leftrightarrow e=10$, $e=7 \leftrightarrow e=14$, $e=9 \leftrightarrow e=18$ and $e=15 \leftrightarrow e=30$. For reference, we repeat the following remark from the introduction.

\begin{remark}\label{e=1 and e=2} The $e$-cuspidal pairs of $G^F$ for $e=1 \leftrightarrow e=2$ were already determined by Kessar and Malle in \cite{Malle-Kessar} except for the pairs associated to quasi-isolated elements of order 6 when $G^F=E_6(q)$ or $E_7(q)$. 
\end{remark}

\subsection{$e$-cuspidal pairs of $F_4$} \label{Appendix F4}

Let $G$ be simple, simply connected of type $F_4$ defined over $\mathbb{F}_q$ with Frobenius endomorphism $F: G \to G$. In this case, $e$ is relevant for some quasi-isolated semisimple $1 \neq s \in G^{*F}$ if and only if  $e \in \{1,2,3,4,6\}$. By Remark \ref{e=1 and e=2} and Ennola duality, it remains to determine the $e$-cuspidal pairs for $e=3$ and $e=4$. 

\begin{thm}\label{e-cuspidal pairs F4 good l}
Let $e=e_\ell(q) \in \{3,4\}$. Let $1 \neq s \in G^{*F}$ be semisimple, quasi-isolated and such that $e$ is relevant for $s$. Then the $e$-cuspidal pairs $(L, \lambda)$ of $G$ with $\lambda \in \mathcal{E}(L^F,s)$ (up to $G^F$-conjugacy), and the order of their relative Weyl groups $W=W_{G^F}(L, \lambda)$ are as indicated in Table \ref{table:Quasi-isolated blocks F4 good l}. In particular, generalized $e$-Harish-Chandra theory holds in $\mathcal{E}(G^F,s)$ for every quasi-isolated semisimple element $1 \neq s \in G^{*F}$.
\end{thm}

\begin{longtable}{|c|l|c|l|l|l|c|c|} 
\caption{Quasi-isolated blocks in $F_4(q)$} \\ \hline
\label{table:Quasi-isolated blocks F4 good l} 
No. & $C_{G^*}(s)^F$  & $e$ & $L^F$ & $C_{L^*}(s)^F$ & $\lambda$  & $|W|$ & $l(B)$ \\ \hline \hline
1 & $A_2(q)A_2(q)$ & 3 & $\Phi_3^2$ & $\Phi_3^2$ & 1 & 9 & 9 \\ \hline
2 & $B_4(q)$ & 3 & $\Phi_3.\tilde{A_2}(q)$ & $\Phi_1\Phi_3.\tilde{A_1}(q)$ & $\phi_{11}, \phi_2$ & 6 & 6 \\ 
3 & & 3 & $G^F$ & $C_{G^*}(s)^F$ & 13 chars. & 1 & 1\\ \hline
4 & $C_3(q)A_1(q)$ & 3 & $\Phi_3.A_2(q)$ & $\Phi_1\Phi_3. A_1(q)$ & $\phi_{11},\phi_2$ & 6 & 6\\
5 & & 3 & $G^F$ & $C_{G^*}(s)^F$ &  12 chars. & 1 & 1\\ \hline
6 & $A_3(q)\tilde{A_1}(q)$ & 3 & $\Phi_3.A_2(q)$ & $\Phi_1 \Phi_3. \tilde{A_1}(q)$ & $\phi_{11},\phi_2$ &3 & 3 \\ 
7 & & 3 & $G^F$ & $C_{G^*}(s)^F$ & 4  chars. & 1 & 1 \\ \hline \hline
1 & $B_4(q)$ & 4 & $\Phi_4^2$ & $\Phi_4^2$ & 1 & 32 & 14\\ 
2 & & 4 & $\Phi_4.B_2(q)$ & $\Phi_4.B_2(q)$ & $(12,0),(01,2)$ & 4 & 4 \\
3 & & 4 & $G^F$ & $C_{G^*}(s)^F$ & $(13,1),(013,13)$, & 1 & 1 \\
& & & & & $(014,12)$ & & \\ \hline
4 & $C_3(q)A_1(q)$ & 4 & $\Phi_4.B_2(q)$ & $\Phi_4.A_1(q)A_1(q)$ & $\phi_{11} \otimes \phi_{11}, \phi_{11} \otimes \phi_2$, & 4 & 4\\
& & & & &  $\phi_2 \otimes \phi_{11}, \phi_2 \otimes \phi_2$ & & \\
5 & & 4 & $G^F$ & $C_{G^*}(s)^F$ & 8 chars. & 1 & 1 \\ \hline
6 & $A_3(q)A_1(q)$ & 4 & $\Phi_4.B_2(q)$ & $\Phi_2\Phi_4. \tilde{A_1}(q)$ & $\phi_{11}, \phi_2$ & 4 & 4\\ 
7 & & 4 & $G^F$ & $C_{G^*}(s)^F$ & $\phi_{22} \otimes \phi_{11}, \phi_{22} \otimes \phi_2$ & 1 & 1\\ \hline
8 & $^2\!A_3(q)\tilde{A_1}(q)$ & 4 & $\Phi_4.B_2(q)$ & $\Phi_1 \Phi_4. \tilde{A_1}(q)$ & $\phi_{11}, \phi_2$ & 4 & 4\\
9 & & 4 & $G^F$ & $C_{G^*}(s)^F$ & $\phi_{22} \otimes \phi_{11}, \phi_{22} \otimes \phi_2$ & 1 & 1\\ \hline
\end{longtable}

\begin{proof}
By Theorem \ref{Jordan decomposition commutes connected centre} we can easily determine $R_L^G(\lambda)$ for each $e$-cuspidal pair in the table. It is easily checked that $\mathcal{E}(G^F,s)$ is the union of the corresponding $e$-Harish-Chandra series as in Corollary \ref{e-Harish-Chandra alternative}, which proves the assertion.
\end{proof}

Let $\pi_{uni}$ denote the \textbf
{uniform projection}, that is the projection from the space of class functions onto the subspace of uniform functions (see \cite[12.11 Definition]{Digne-Michel}). The image of a class function under $\pi_{uni}$ can be explicitly computed using  \cite[12.12 Proposition]{Digne-Michel}. \\

For further reference we showcase the main method used in Sections 3.3 and 3.4 to determine the constituents of the $R_L^G(\lambda)$'s. We do so here because for $G$ of type $E_6$ and $E_7$ the calculations are too lengthy to get a grasp of the underlying idea.

\begin{example} \label{showcase}
Let $e=4$ and let $(L, \lambda)=(B_2, (12,0))$ be a $4$-cuspidal pair of $F_4(q)$. In this case  $\pi_{uni}(R_L^G(\lambda))=\frac{1}{4} [ (1234,012) - (123,02) + (023,12) - (0124,123) + (0123,124) - (23,0)  + (14,0) - (02,3) + (01,4) + (023,-) - (014,-) + (0123,2)] - \frac{3}{4} [(03,2) - (012,23) - (04,1) - (01234,12)] \in \frac{1}{4} \mathbb{Z} \mathcal{E}(G^F,s)$. Since $R_{L}^G(\lambda)$ is a generalized character, there exists an element $\gamma \in \mathbb{Q}\mathcal{E}(G^F,s)$ which is orthogonal to the space of uniform class functions of $G^F$, such that 
$ R_L^G(\lambda)=\pi_{uni}(R_{L}^G(\lambda))+ \gamma \in \mathbb{Z}\mathcal{E}(G^F,s)$. A basis for the subspace of $\mathbb{Q} \mathcal{E}(G^F,s)$ orthogonal to the space of uniform class functions is given by
\begingroup
\allowdisplaybreaks
\begin{align*}
\varphi_1&=\frac{1}{4} \left((1234,012)-(0124,123)+(0123,124)-(01234,12) \right), \\
\varphi_2&= \frac{1}{4} \left((123,02)- (023,12) + (012,23) - (0123,2) \right), \\
\varphi_3&= \frac{1}{4} \left( (124,01) - (014,12) + (012,14) - (0124,1) \right), \\
\varphi_4&= \frac{1}{4} \left( (23,0)-(03,2)+(02,3)-(023,-) \right), \\
\varphi_5&= \frac{1}{4} \left( (14,0) - (04,1) + (01,4) - (014,-) \right).
\end{align*} 
\endgroup
This was determined as follows. First, we determine $\mathbb{Q} \mathcal{E}(G^F,s)_{uni}$, the subspace of uniform class functions in $\mathbb{Q}\mathcal{E}(G^F,s)$. This is easy since a basis of that space is given by the different Deligne--Lusztig characters corresponding to $s$ whose deomposition into irreducible characters is known by work of Lusztig \cite{Orangebook}. Computing the orthogonal complement of that subspace then yields $\{\varphi_1, \dots , \varphi_5\}$.
By the Mackey formula we know that $\|R_L^G(\lambda)\|^2=|W_{G^F}(L,\lambda)|=4$ and since $\|R_L^G(\lambda)\|^2= \|\pi_{uni}(R_{L}^G(\lambda))\|^2+ \| \gamma \|^2$, it follows that $\gamma= - \varphi_1 + \varphi_2+\varphi_4-\varphi_5$. Hence, $R_L^G(\lambda)=-(03,2)+(012,23)+(04,1)+ (01234,12)$.
\end{example}

\subsection{$e$-cuspidal pairs of $E_6$}\label{Appendix E6} 

Let $G$ be a simple, simply connected 
of type $E_6$ defined over $\mathbb{F}_q$ with Frobenius endomorphism $F:G \to G$. Then $G^F= E_{6,sc}(q)$ or $^2\!E_{6,sc}(q)$. We start with $G^F=E_{6,sc}(q)$. Here, $e$ is relevant for some quasi-isolated $1 \neq s \in G^{*F}$ if and only if $e \in \{1,2,3,4,5,6\}$. Since the center of $G$ is disconnected, the situation is more complicated than for $F_4$. \\

\noindent
In the tables in this section and Section 3.4 we write $C_{G^*}(s)^F=C_{G^*}^\circ(s)^F.(C_{G^*}(s)^F/C_{G^*}^\circ(s)^F)$ when the centraliser is disconnected. A star in the first column next to the number of the line indicates that the quotient $C_{G^*}(s)^F/C_{G^*}^\circ(s)^F$ acts non-trivially on the unipotent characters of $C_{G^*}^\circ(s)^F$. 
To demonstrate the adjustments, we take line 4 of Table \ref{Table:Kessar-Malle E_6 good l} for $e=3$ as an example: 

First, the star indicates that the $F$-stable points of the component group act non-trivially on the 14 unipotent characters of $C_{G^*}^\circ(s)^F=\Phi_1^2.D_4(q)$. It can be shown that there are two orbits of order 3, and 8 trivial orbits. Thus, by Jordan decomposition, $|\mathcal{E}(G^F,s)|=26$.
Now, $C_{L^*}(s)^F/C_{L^*}^\circ(s)^F$ obviously acts trivially on the one unipotent character (which is the trivial character) of the torus $C_{L^*}^\circ(s)^F=\Phi_1^4 \Phi_3$. Hence the induction of that character to $C_{L^*}(s)^F$ yields 3 irreducible constituents. We denote them by $1^{(1)},1^{(2)}$ and $1^{(3)}$.  

In general, if $C_{G^*}(s)^F/C_{G^*}^\circ(s)^F$ acts trivially on a given unipotent character of $C_{G^*}^\circ(s)^F$, the induction of that character always yields 3 irreducible characters of $C_{G^*}(s)^F$. In Table \ref{Table:Kessar-Malle E_6 good l}, we indicate this by adding a superscript from 1 to 3 to that unipotent character. 

\begin{thm} \label{e-cuspidal pairs E6 good l}
Let $e=e_\ell(q) \in \{1,2,3,4,5,6\}$ and let $1 \neq s \in G^{*F}$ be semisimple, quasi-isolated and such that $e$ is relevant for $s$. If $e \in \{1,2\}$ we only consider semisimple, quasi-isolated elements of order 6.
Then the $e$-cuspidal pairs $(L, \lambda)$ of $G$ with $\lambda \in \mathcal{E}(L^F,s)$ (up to $G^F$-conjugacy), and the order of their relative Weyl groups $W=W_{G^F}(L, \lambda)$ are as indicated in Table \ref{Table:Kessar-Malle E_6 good l}. In particular, generalized $e$-Harish-Chandra theory holds in $\mathcal{E}(G^F,s)$ for every quasi-isolated semisimple element $1 \neq s \in G^{*F}$.
\end{thm}
\text{ } \newline
\begin{longtable}{|c|l|c|l|l|l|c|c|} 
\caption{Quasi-isolated blocks in $E_6(q)$} \\ \hline
\label{Table:Kessar-Malle E_6 good l}
No. & $C_{G^*}(s)^F$  & $e$ & $L^F$ & $C_{L^*}(s)^F$ & $\lambda$ & $|W|$ & $l(B)$ \\ \hline \hline
$1^*$ & $\Phi_1^2.A_1(q)^4.3$ & 1 & $\Phi_1^6$ & $\Phi_1^6$ & $1$ & $16$ & 16 
\\ \hline \hline
$1^*$ & $\Phi_1^2. A_1(q)^4.3$ & 2 & $\Phi_1^2 \Phi_2^4$ & $\Phi_1^2 \Phi_2^4$ & 1 & 16 & 16 \\ \hline
$1^*$ & $A_2(q)^3.3$ & 3 & $\Phi_3^3$ & $\Phi_3^3$ & 1 & 81 & 81 \\ \hline
2 & $A_2(q^3).3$ & 3 & $\Phi_3^2.A_2(q)$ & $\Phi_1^2 \Phi_3^2$ & 1 & 9  & 9  \\ \hline
3 & $^2\!A_2(q)A_2(q^2)$ & 3 &  $\Phi_3.^3\!D_4(q)$  & $\Phi_3 \Phi_6.^2\!A_2(q)$ & $\phi_{111}, \phi_{21}, \phi_3$ & 3 & 3\\ \hline
$4^*$ & $\Phi_1^2.D_4(q).3$ & 3 & $\Phi_3.A_2(q)^2$ & $\Phi_1^4 \Phi_3.3$ & $1^{(1)},1^{(2)},1^{(3)}$ & 6 & 6\\ 
$5^*$ & & 3 & $G^F$ & $C_{G^*}(s)^F$ & 8 chars.  & 1 & 1\\ \hline
6 & $\Phi_3.^3\!D_4(q).3$ & 3 & $\Phi_3^3$ & $\Phi_3^3$ & 1 & 72 & 21\\
7 & & 3 & $\Phi_3.^3\!D_4(q)$ & $\Phi_3.^3\!D_4(q)$ & $^3\!D_4[-1]$ & 3 & 3\\ \hline
8 & $\Phi_1 \Phi_2. ^2\!D_4(q)$ & 3 & $\Phi_3.A_2(q)^2$ & $\Phi_1^2 \Phi_2^2  \Phi_3$ & $1$  & 6 & 6\\ 
9 & & 3 & $G^F$ & $C_{G^*}(s)^F$ & 4 chars. & 1 & 1 \\ \hline
10 & $A_5(q)A_1(q)$ & 3 & $\Phi_3^2. A_2(q)$ & $\Phi_1. \Phi_3. A_1(q)$ & $\phi_{11}, \phi_2$ & 18 & 9 \\
11 & & 3 & $G^F$ & $C_{G^*}(s)^F$ & 4 chars. & 1 & 1 \\ \hline \hline
1 & $A_5(q)A_1(q)$ & 4 & $\Phi_1 \Phi_4. ^2\!A_3(q)$ & $\Phi_1 \Phi_2 \Phi_4. A_1(q)^2$ & 4 chars. & 4 & 4 \\
2 & & 4 & $G^F$ & $C_{G^*}(s)^F$ & 6 chars. & 1 & 1 \\ \hline
$3^*$ & $\Phi_1^2.D_4(q).3$ & 4 & $\Phi_1^2 \Phi_4^2$ & $\Phi_1^2 \Phi_4^2$ & 1 & 48 & 30 \\
$4^*$ & & 4 & $G^F$ & $C_{G^*}(s)^F$ & 12 chars. & 1 & 1\\ \hline
5 & $\Phi_1 \Phi_2.^2\!D_4(q)$ & 4 & $\Phi_1 \Phi_4.^2\!A_3(q)$ & $\Phi_1 \Phi_2 \Phi_4.A_1(q)^2$ & $\phi_{11} \otimes \phi_{11}$, & 4 & 4\\ 
& & & & & $\phi_{2} \otimes \phi_2$ & 4 & 4\\
& & & & & $\phi_{11} \otimes \phi_2$ & 2 & 2\\ \hline \hline
1 & $A_5(q)A_1(q)$ & 5 & $\Phi_1 \Phi_5.A_1(q)$ & $\Phi_1 \Phi_5.A_1(q)$ & $\phi_{11}, \phi_{2}$ & 5 & 5\\ 
2 & & 5 & $G^F$ & $C_{G^*}(s)^F$ & 12 chars. & 1 & 1\\ \hline  \hline
1 & $A_5(q)A_1(q)$ &  6 & $\Phi_3 \Phi_6. ^2\!A_2(q)$ & $\Phi_2 \Phi_3 \Phi_6.A_1(q)$ & $\phi_{11}, \phi_{2}$ & 6 & 6\\ 
2 & & 6 & $G^F$ & $C_{G^*}(s)^F$ & 10 chars. & 1 & 1\\ \hline
$3^*$ & $\Phi_1^2.D_4(q).3$ & 6 & $\Phi_6.A_2(q^2)$ & $\Phi_1^2 \Phi_2^2 \Phi_6.3$ & $1^{(1)},1^{(2)},1^{(3)}$ & 6 & 6\\ 
$4^*$ & & 6 & $G^F$ & $C_{G^*}(s)^F$ & 8 chars. & 1 & 1\\ \hline
5 & $\Phi_3.^3\!D_4(q).3$ & 6 & $\Phi_3 \Phi_6^2$ & $\Phi_3 \Phi_6^2$ & 1 & 72 & 21\\ 
6 & & 6 & $G^F$ & $C_{G^*}(s)^F$ & $\phi_{2,1}$ & 1 & 1\\ \hline 
7 & $\Phi_1 \Phi_2.^2\!D_4(q)$ & 6 & $\Phi_6.A_2(q^2)$ & $\Phi_1^2  \Phi_2^2 \Phi_6$ & 1 & 6 & 6 \\ 
8 & & 6 & $G^F$ & $C_{G^*}(s)^F$ & 4 chars. & 1 & 1\\ \hline
\end{longtable}

\normalsize

\begin{proof}
For $q=2$ the assertion follows from Proposition \ref{Exceptions, Mackey not proved}. Suppose $q>2$. Except for the pairs given in Table \ref{table:Lusztig Induction E6 good l}, $\lambda$ is uniform, so $R_L^G(\lambda)$ can be determined using the formula for the uniform projection. For the $3$-cuspidal pairs $(\Phi_3.A_2(q)^2, 1^{(i)})$ ($i=1, \dots, 3$) and the $6$-cuspidal pairs $(\Phi_6.A_2(q^2),1^{(i)})$ ($i=1, \dots, 3$), we are not able to determine $R_L^G( \lambda)$ (see Remark \ref{Problems with R_L^G}). However the methods used in Example \ref{showcase} give enough information to prove that an $e$-Harish Chandra theory holds in the Lusztig series related to the $e$-cuspidal pairs above. For the $3$-cuspidal pair $(\Phi_3.^3\!D_4(q), ^3\!D_4[-1])$ we use a slightly different argument. Let $s \in G^{*F}$ be semisimple and quasi-isolated with $C_{G^*}(s)^F= \Phi_3.^3\!D_4(q).3$. By Table \ref{Table:Kessar-Malle E_6 good l} and Theorem \ref{e-cuspidal pairs}, $\mathcal{E}(G^F,s)$ decomposes into two blocks, namely $b_{G^F}(\Phi_3^3,1)$, which contains $\mathcal{E}(G^F,(\Phi_3^3,1))$ and $b_{G^F}(\Phi_3.^3\!D_4(q), ^3\!D_4[-1])$ which contains $\mathcal{E}(G^F, (\Phi_3.^3\!D_4(q),^3\!D_4[-1]))$. Since any two different blocks, seen as subsets of $\operatorname{Irr}(G^F) \cup \operatorname{IBr}(G^F)$, are disjoint, we have 
\[
\mathcal{E}(G^F, (\Phi_3.^3\!D_4(q),^3\!D_4[-1]) \subseteq \mathcal{E}(G^F,s) \setminus \mathcal{E}(G^F,(\Phi_3^3,1)) \] and the latter is equal to $\{^3\!D_4[-1]^{(0)}, ^3\!D_4[-1]^{(1)}, ^3\!D_4[-1]^{(2)}\}$.
Since $R_{\Phi_3.^3\!D_4(q)}^G(^3\!D_4[-1])$ has norm 3, it follows that  $R_{\Phi_3.^3\!D_4(q)}^G(^3\!D_4[-1])=\text{}^3\!D_4[-1]^{(0)}+^3\!D_4[-1]^{(1)}+^3\!D_4[-1]^{(2)}$.
Hence, an $e$-Harish-Chandra theory holds in $\mathcal{E}(G^F,s)$.
\end{proof}

\begin{remark} \label{Problems with R_L^G}
The reason we are not able to determine $R_L^G(1^{(i)})$ in the cases numbered $4^*$ ($e=3$) and $3^*$ ($e=6$) in Table \ref{table:Lusztig Induction E6 good l} is part of a general problem of parametrising the characters in a Lusztig series in a canonical way when the corresponding centralizer is disconnected.  In the cases above, every constituent of $R_L^G(1^{(i)})$ is an element of an orbit of order 3. However, we are not able to determine which element of this orbit is the right constituent. We only know that it has to be one of the three. This is indicated by adding a superscript $(i)$ to the constituents.
\end{remark}

\begin{longtable}{|c|c|l|p{12,2cm}|}
\caption{Decomposition of the non-uniform $R_L^G(\lambda)$} \\ \hline
\label{table:Lusztig Induction E6 good l}
No. & $e$ & $\lambda$ & $\pm R_L^G(\lambda)$ \\ \hline \hline
$4^*$ & 3 & $1^{(i)}$ & $(013,123)^{(i)}+(0123,1234)^{(i)}+ (02,13)^{(i)}+(01,23)^{(i)}+(1,3)^{(i)}+(0,4)^{(i)}$ \\ \hline
7 & 3 & $^3\!D_4[-1]$ & $^3\!D_4[-1]^{(0)}+^3\!D_4[-1]^{(1)}+^3\!D_4[-1]^{(2)}$\\ \hline
$3^*$ & 6 & $1^{(i)}$ &  $(013,123)^{(i)}+(0123,1234)^{(i)}+(12,03)^{(i)}+(1,3)^{(i)}+(0,4)^{(i)}+(0123,-)^{(i)}$ \\ \hline
\end{longtable}

The analogue of Table \ref{Table:Kessar-Malle E_6 good l} for $^2\!E_6(q)$ can be obtained as follows. 
The $e=3$ part of the table for $^2\!E_6(q)$ is the Ennola dual of the $e=6$ part of Table \ref{Table:Kessar-Malle E_6 good l} and vice-versa. The $e=10$ part is the Ennola dual of the $e=5$ part and the $e=4$ part is the Ennola dual of the $e=4$ part of Table \ref{Table:Kessar-Malle E_6 good l}. Similarly, the analogue of Table \ref{table:Lusztig Induction E6 good l} for $^2\!E_6(q)$ can be obtained via Ennola duality. Thus, the assertion of Theorem \ref{e-cuspidal pairs E6 good l} holds for $^2\!E_{6,sc}$ as well.

\subsection{$e$-cuspidal pairs of $E_7$}
Let $G$ be a simple, simply connected group of type $E_7$ defined over $\mathbb{F}_q$ with Frobenius endomorphism $F:G \to G$. In this case, $e$ is relevant for some quasi-isolated semisimple $1 \neq s \in G^{*F}$ if and only if $e \in \{1,2,3,4,5,6,7,9,12,14,18\}$. By Remark \ref{e=1 and e=2} and Ennola duality, it remains to determine the $e$-cuspidal pairs for $e \in \{1,3,4,5,7,9,12\}$. Since the center of $G$ is disconnected, we encounter the same issues as in Section \ref{Appendix E6}.

\begin{thm} \label{e-cuspidal pairs $E_7$ good l}
Let $e=e_\ell(q) \in \{1,3,4,5,7,9,12\}$ and let $1 \neq s \in G^{*F}$ be semisimple, quasi-isolated and such that $e$ is relevant for $s$. If $e=1$ we only consider semisimple, quasi-isolated elements of order 6. Then the $e$-cuspidal pairs $(L, \lambda)$ of $G$ with $\lambda \in \mathcal{E}(L^F,s)$ (up to $G^F$-conjugacy), and the order of their relative Weyl groups $W=W_{G^F}(L, \lambda)$ are as indicated in Table \ref{Table: Quasi-isolated blocks of E_7(q) good l}. In particular, generalized $e$-Harish-Chandra theory holds in $\mathcal{E}(G^F,s)$ for every quasi-isolated semisimple element $1 \neq s \in G^{*F}$.
\end{thm}

\tiny

\begin{longtable}{|c|l|c|l|l|l|c|c|} 
\caption{Quasi-isolated blocks of $E_7(q)$} \\ \hline
\label{Table: Quasi-isolated blocks of E_7(q) good l}
No. & $C_{G^*}(s)^F$  & $e$ & $L^F$ & $C_{L^*}(s)^F$ & $\lambda$  & $|W|$ & $l(B)$ \\ \hline \hline
$1^*$ & $\Phi_1.A_2(q)^3.2$ & 1 & $\Phi_7$ & $\Phi_7$ & 1 & 270 & 27 \\ \hline
$2^*$ & $\Phi_2.^2\!A_2(q)^3.2$ & 1 & $\Phi_1^3.A_1(q)^4$ & $\Phi_1^3 \Phi_2^4$ & 1 & 10 & 10 \\ 
$3^*$ & & 1 & $\Phi_1^2.D_4(q)A_1(q)$ & $\Phi_1^2 \Phi_2^3.^2\!A_2(q)$ & $\phi_{21}$ & 5 & 5 \\
$4^*$ & & 1 & $\Phi_1^2.D_4(q)A_1(q)$ & $\Phi_1^2 \Phi_2^3.^2\!A_2(q)$ & $\phi_{21}$ & 4 & 4 \\
$5^*$ & & 1 & $\Phi_1.D_6(q)$ & $\Phi_1 \Phi_2^2.^2\!A_2(q)^2$ & $\phi_{21} \otimes \phi_{21}$ & 4 & 4 \\
$6^*$ & & 1 & $\Phi_1.D_6(q)$ & $\Phi_1 \Phi_2^2.^2\!A_2(q)^2$ & $\phi_{21} \otimes \phi_{21}$ & 2 & 2 \\
$7^*$ & & 1 & $G^F$ & $C_{G^*}(s)^F$ & $(\phi_{21}\otimes \phi_{21} \otimes \phi_{21})^{(1,2)}$ & 1 & 1 \\ \hline \hline
1 & $A_7(q).2$ & 3 & $\Phi_1 \Phi_3^2.A_2(q)$ & $\Phi_1^2 \Phi_3^2.A_1(q)$ & $\phi_{11}, \phi_2$ & 36 & 18 \\
2 & & 3 & $\Phi_3.A_5(q)$ & $\Phi_1 \Phi_3.A_4(q)$ & $\phi_{311}$ & 6 & 6 \\
3 & & 3 & $G^F$ & $C_{G^*}(s)^F$ & $\phi_{4211}^{(1,2)}$  & 1 & 1\\ \hline
4 & $^2\!A_7(q).2$ & 3 & $\Phi_3.A_2(q)A_1(q^3)$ & $\Phi_1 \Phi_2 \Phi_3 \Phi_6. A_1(q).2$ & $\phi_{11}^{(1,2)}$, $\phi_2^{(1,2)}$ & 6 & 6\\
5 & & 3 & $G^F$ & $C_{G^*}(s)^F$ & 16 chars. & 1 & 1\\ \hline
6 & $\Phi_1.E_6(q).2$ & 3 & $\Phi_1 \Phi_3^3$ & $\Phi_1 \Phi_3^3$ & 1 & 1296 & 48\\ 
7 & & 3 & $\Phi_1 \Phi_3.^3\!D_4(q)$ & $\Phi_1 \Phi_3.^3\!D_4(q)$ & $^3\!D_4[-1]$ & 6 & 6\\ 
8 & & 3 & $G^F$ & $C_{G^*}(s)^F$ &  6 chars. & 1 & 1\\ \hline
9 & $\Phi_2.^2\!E_6(q).2$ & 3 &  $\Phi_3^2.A_1(q^3)$ & $\Phi_2 \Phi_3^2 \Phi_6.2$ & $1^{(1,2)}$ & 72 & 21 \\
10 & & 3 & $G^F$ & $C_{G^*}(s)^F$ & 18 chars. & 1 & 1\\ \hline
$11^*$ & $A_3(q)^2A_1(q).2$ & 3 & $\Phi_1 \Phi_3^2. A_2(q)$ & $\Phi_1^2 \Phi_3^2. A_1(q)$ & $\phi_{11}, \phi_2$ & 9 & 9 \\
$12^*$ & & 3 & $\Phi_1 \Phi_3.A_5(q)$ & $\Phi_1 \Phi_3. A_1(q)A_3(q)$ & 4 chars. & 3 & 3 \\
$13^*$ & & 3 & $G^F$ & $C_{G^*}(s)^F$ & 10 chars. & 1 & 1\\ \hline
14 & $A_3(q^2)A_1(q).2$ & 3 & $\Phi_1 \Phi_3.^3\!D_4(q)$  & $\Phi_1 \Phi_2 \Phi_3 \Phi_6.A_1(q)$ & $\phi_{11}, \phi_2$ & 6 & 6\\ 
15 & & 3 & $G^F$ & $C_{G^*}(s)^F$ & 8 chars. & 1 & 1 \\ \hline
16 & $A_3(q^2)A_1(q).2$ & 3 & $\Phi_3.A_2(q)A_1(q^3)$ & $\Phi_1 \Phi_2 \Phi_3 \Phi_6.A_1(q).2$ & $\phi_{11}^{(1,2)}, \phi_2^{(1,2)}$ & 3 & 3 \\ 
17 & & 3 & $G^F$ & $C_{G^*}(s)^F$ & 8 chars. & 1 & 1 \\ \hline
$18^*$ & $\Phi_1.A_2(q)^3.2$ & 3 & $\Phi_1 \Phi_3^3$ & $\Phi_1 \Phi_3^3$ & 1 & 54 & 54 \\ \hline
19 & $A_5(q)A_2(q)$ & 3 & $\Phi_1 \Phi_3^3$ & $\Phi_1 \Phi_3^3$ & 1 & 54 & 27 \\
20 & & 3 & $\Phi_3.A_5(q)$ & $\Phi_3.A_5(q)$ & $\Phi_{2^21^2}, \Phi_{42}$ & 3 & 3 \\ \hline
21 & $^2\!A_5(q) ^2\!A_2(q)$ & 3 & $\Phi_1 \Phi_3.^3\!D_4(q)$ & $\Phi_1 \Phi_3 \Phi_6. ^2\!A_2(q)$ & $\phi_{111}, \phi_{21}, \phi_3$ & 6 & 6 \\ 
22 & & 3 & $G^F$ & $C_{G^*}(s)^F$ & 15 chars. & 1 & 1 \\ \hline
$23^*$ & $\Phi_1.D_4(q) A_1(q)^2.2$ & 3 &  $\Phi_3. A_5(q)$ & $\Phi_1^3 \Phi_3. A_1(q)^2.2$ & 5 chars.  & 6 & 6 \\
$24^*$ & & 3 & $G^F$ & $C_{G^*}(s)^F$ & 40 chars. & 1 & 1\\ \hline
25 & $\Phi_2.^2\!D_4(q)A_1(q^2).2$ & 3 & $\Phi_3.A_5(q)$ & $\Phi_1 \Phi_2^2 \Phi_3.A_1(q^2).2$ & $\phi_{11}^{(1,2)}, \phi_2^{(1,2)}$ & 6 & 6\\ 
26 & & 3 & $G^F$ & $C_{G^*}(s)^F$ & 16 chars. & 1 & 1 \\ \hline 
$27^*$ & $\Phi_2.D_4(q)A_1(q)^2.2$ & 3 &  $\Phi_3.A_5(q)$ & $\Phi_1^2 \Phi_2 \Phi_3 A_1(q)^2.2$ & 5 chars. & 6 & 6\\
$28^*$ & & 3 & $G^F$ & $C_{G^*}(s)^F$ & 40 chars. & 1 & 1\\ \hline
29 & $\Phi_1. ^2\!D_4(q)A_1(q^2).2$ & 3 & $\Phi_3.A_5(q)$ & $\Phi_1^2 \Phi_2 \Phi_3.A_1(q^2).2$ & $\phi_{11}^{(1,2)}, \phi_2^{(1,2)}$ & 6 & 6 \\ 
30 & & 3 & $G^F$ & $C_{G^*}(s)^F$ & 16 chars. & 1 & 1 \\ \hline 
31 & $D_6(q)A_1(q)$ & 3 & $\Phi_1 \Phi_3^2. A_2(q)$ & $\Phi_1^2 \Phi_3^2$ & $\phi_{11}, \phi_2$ & 36 & 18 \\ 
32 & & 3 & $\Phi_3.A_5(q)$ & $\Phi_1 \Phi_3. A_3(q) A_1(q)$ & 4 chars. & 6 & 6 \\ 
33 & & 3 & $G^F$ & $C_{G^*}(s)^F$ & 24 chars. & 1 & 1 \\ \hline \hline
1 & $A_7(q).2$ & 4 & $\Phi_4^2. A_1(q)^3$ & $\Phi_1 \Phi_2^2 \Phi_4^2.2$ & $1^{(1,2)}$ & 32 & 14 \\ 
2 & & 4 & $\Phi_4.^2\!D_4(q)A_1(q)$ & $\Phi_2^2 \Phi_4.A_3(q).2$ & $\phi_{22}^{(1,2)}$ & 4 & 4 \\
3 & & 4 & $G^F$ & $C_{G^*}(s)^F$ & 8 chars. & 1& 1 \\ \hline
4 & $^2\!A_7(q).2$ & 4 & $\Phi_4^2.A_1(q)^3$ & $\Phi_1^2 \Phi_2 \Phi_4^2.2$ & $1^{(1,2)}$ & 32 & 14 \\
5 & & 4 & $\Phi_4.^2\!D_4(q)A_1(q)$ & $\Phi_1 \Phi_2 \Phi_4.^2\!A_3(q).2$ & $\phi_{22}^{(1,2)}$ & 4 & 4 \\ 
6 & & 4 & $G^F$ & $C_{G^*}(s)^F$ & 8 chars. & 1 & 1\\ \hline
7 & $\Phi_1.E_6(q).2$ & 4 & $\Phi_4^2.A_1(q)^3$ & $\Phi_1^3 \Phi_4^2.2$ & $1^{(1,2)}$ & 96 & 16\\
8 & & 4 & $\Phi_4. ^2\!D_4(q)A_1(q)$ & $\Phi_1^2 \Phi_4.^2\!A_3(q).2$ & $\phi_{22}^{(1,2)}$ & 4 & 4 \\
9 & & 4 & $G^F$ & $C_{G^*}(s)^F$ & 20 chars. & 1 & 1\\ \hline
10 & $\Phi_2.^2\!E_6(q).2$ & 4 &  $\Phi_4^2.A_1(q)^3$ & $\Phi_2^3 \Phi_4^2.2$ & $1^{(1,2)}$ & 96 & 16\\ 
11 & & 4 & $\Phi_4.^2\!D_4(q)A_1(q)$ & $\Phi_2^2 \Phi_4.A_3(q).2$ & $\phi_{22}^{(1,2)}$ & 4 & 4 \\
12 & & 4 & $G^F$ & $C_{G^*}(s)^F$ & 20 chars. & 1 & 1 \\ \hline
$13^*$ & $A_3(q)^2A_1(q).2$ & 4 &  $\Phi_4^2.A_1(q)^3$ & $\Phi_2^2 \Phi_4^2.A_1(q)$ & $\phi_{11}, \phi_2$ & 14 & 14 \\ 
$14^*$ & & 4 & $\Phi_4.^2\!D_4(q)A_1(q)$ & $\Phi_2 \Phi_4.A_3(q)A_1(q)$ & $\phi_{22} \otimes \phi_{11}$, & 4 & 4\\
 & & & & & $\phi_{22} \otimes \phi_2$ & 4 & 4\\
$15^*$ & & 4 & $G^F$ & $C_{G^*}(s)^F$ & 4 chars. & 1 & 1 \\ \hline
16  & $A_3(q^2)A_1(q).2$ & 4 & $\Phi_4^2.A_1(q)^3$ & $\Phi_1 \Phi_2 \Phi_4^2.A_1(q)$ & $\phi_{11}, \phi_2$ & 16 & 10 \\ \hline
$17^*$ & $^2\!A_3(q)^2A_1(q).2$ & 4 &  $\Phi_4^2.A_1(q)^3$ & $\Phi_1^2 \Phi_4^2.A_1(q)$ & $\phi_{11}, \phi_2$ & 14 & 14 \\
$18^*$ & & 4 &  $\Phi_4.^2\!D_4(q)A_1(q)$ & $\Phi_1 \Phi_4.^2\!A_3(q)A_1(q)$ & $\phi_{22} \otimes \phi_{11}$, & 4 & 4 \\
& & & & & $\phi_{22} \otimes \phi_2$ & 4 & 4\\
$19^*$ & & 4 & $G^F$ & $C_{G^*}(s)^F$ & 4 chars. & 1 & 1\\ \hline
24 & $A_5(q) A_2(q)$ & 4 & $\Phi_4.^2\!D_4(q)A_1(q)$ & $\Phi_1 \Phi_2 \Phi_4. A_2(q)A_1(q)$ & 6 chars. & 4 & 4 \\ 
25 & & 4 & $G^F$ & $C_{G^*}(s)^F$ & 9 chars. & 1 & 1 \\ \hline
26 & $^2\!A_5(q) ^2\!A_2(q)$ & 4 &  $\Phi_4. ^2\!D_4(q) A_1(q)$ & $\Phi_1 \Phi_2 \Phi_4.^2\!A_2(q)$ & 6  chars. & 4 & 4\\
27 & & 4 & $G^F$ & $C_{G^*}(s)^F$ & 9 chars. & 1 & 1 \\ \hline
$28^*$ & $\Phi_1.D_4(q)A_1(q)^2.2$ & 4 &  $\Phi_4^2.A_1(q)^3$ & $\Phi_1 \Phi_4^2.A_1(q)^2$ & $\phi_{11} \otimes \phi_{11}$,  & 32 & 20 \\
& & & & & $\phi_{11} \otimes \phi_{2}$, & 16 & 10\\
& & & & &  $\phi_{2} \otimes \phi_2$   & 32 & 20\\
$29^*$ & & 4 & $G^F$ & $C_{G^*}(s)^F$ & 20 chars. & 1 & 1 \\ \hline
30 & $\Phi_2.^2\!D_4(q)A_1(q^2).2$ & 4 &  $\Phi_4^2.A_1(q)^3$ & $\Phi_2 \Phi_4^2.A_1(q)^2$ & $\phi_{11} \otimes \phi_{11}$ & 16 & 16\\ 
& & 4 & & & $\phi_2 \otimes \phi_2$ & 16  & 16 \\
& & 4 & & & $\phi_{11} \otimes \phi_2$ & 8 & 8\\ \hline
$31^*$ & $\Phi_2.D_4(q)A_1(q)^2.2$ & 4 &  $\Phi_4^2.A_1(q)^3$ & $\Phi_2 \Phi_4^2.A_1(q)^2$ & $\phi_{11} \otimes \phi_{11}$,  & 32 & 20\\
& & & & & $\phi_{11} \otimes \phi_{2}$, & 16 & 10 \\
& & & & &  $\phi_{2} \otimes \phi_2$   & 32 & 20\\
$32^*$ & & 4 & $G^F$ & $C_{G^*}(s)^F$ & 20 chars. & 1 & 1\\ \hline
33 & $\Phi_1.^2\!D_4(q)A_1(q^2).2$ & 4 & $\Phi_4^2.A_1(q)^3$ & $\Phi_1 \Phi_4^2.A_1(q)^2$ & $\phi_{11} \otimes \phi_{11}$ & 16 & 16\\ 
& & 4 & & & $\phi_2 \otimes \phi_2$ & 16 & 16\\
& & 4 & & & $\phi_{11} \otimes \phi_2$ & 8 & 8\\\hline \hline
1 & $A_7(q).2$ & 5 & $\Phi_1 \Phi_5.A_2(q)$ & $\Phi_1 \Phi_5. A_2(q)$ & $\phi_{111}, \phi_{21}, \phi_3$ & 10 & 10 \\
2 & & 5 & $G^F$ & $C_{G^*}(s)^F$ & 14 chars. & 1 & 1 \\ \hline
3 & $\Phi_1.E_6(q).2$ & 5 & $\Phi_1 \Phi_5. A_2(q)$ & $\Phi_1^2 \Phi_5. A_1(q)$ & $\phi_{11}, \phi_2$ & 10 & 10\\
4 & & 5 & $G^F$ & $C_{G^*}(s)^F$ & 40 chars. & 1 & 1 \\ \hline 
5 & $A_5(q)A_2(q)$ & 5 & $\Phi_1\Phi_5.A_2(q) $ & $\Phi_1 \Phi_5. A_2(q)$ & $\phi_{111}, \phi_{21}, \phi_{3}$ & 5 & 5 \\ 
6 & & 5 & $G^F$ & $ C_{G^*}(s)^F$ & 18 chars. & 1 & 1 \\ \hline 
7 & $D_6(q)A_1(q)$ & 5 & $\Phi_1 \Phi_5.A_2(q)$ & $ \Phi_1^2 \Phi_5. A_1(q)$ & $\phi_{11}, \phi_2$ & 10 & 10 \\ 
8 & & 5 & $G^F$ & $C_{G^*}(s)^F$ & 64 chars. & 1 & 1 \\ \hline \hline
1 & $A_7(q).2$ & 7 & $\Phi_1 \Phi_7$ & $\Phi_1 \Phi_7$ & 1 & 14 & 14\\
2 & & 7 & $G^F$ & $C_{G^*}(s)^F$ & 30 chars. & 1 & 1\\ \hline \hline
1 & $\Phi_1.E_6(q).2$ & 9 & $\Phi_1 \Phi_9$ & $\Phi_1 \Phi_9$ & 1 & 18 & 18\\
2 & & 9 & $G^F$ & $C_{G^*}(s)^F$ & 42 chars. & 1 & 1 \\ \hline \hline
1 & $\Phi_1.E_6(q).2$ & 12 & $\Phi_{12}.A_1(q^3)$ & $\Phi_1 \Phi_3 \Phi_{12}$ & 1 & 24 & 24 \\
2 & & 12 & $G^F$ & $C_{G^*}(s)^F$ & 36 chars. & 1 & 1\\ \hline
1 & $\Phi_2.^2\!E_6(q).2$ & 12 & $\Phi_{12}.A_1(q^3)$ & $\Phi_2 \Phi_6 \Phi_{12}$ & 1 & 24 & 24\\ 
2 & & 12 & $G^F$ & $C_{G^*}(s)^F$ & 36 chars. & 1 & 1 \\  \hline
\end{longtable}

\normalsize

\begin{proof}
Similar to the proof of Theorem \ref{e-cuspidal pairs E6 good l}.
\end{proof}

\begin{longtable}{|c|p{0,2cm}|p{2,2cm}|p{12cm}|}
\caption{Decomposition of the non-uniform $R_L^G(\lambda)$} \\ \hline
\label{table:Lusztig Induction E7 good l}
No. & $e$ &  $\lambda$ & $\pm R_L^G(\lambda)$ \\ \hline \hline
4 & 3 & $\phi_{11}^{(i)}$ & $\phi_{1^8}^{(i)}+\phi_{2^311}^{(i)}+ \phi_{3221}^{(i)}+ \phi_{422}^{(i)} + \phi_{62}^{(i)} + \phi_{71}^{(i)}$ \\ \cline{2-4}
& 3 & $\phi_2^{(i)}$ & $\phi_{21^6}^{(i)}+\phi_{221^4}^{(i)}+\phi_{3311}^{(i)}+\phi_{431}^{(i)}+\phi_{53}^{(i)}+\phi_8^{(i)}$ \\ \hline
7 & 3 & $^3\!D_4[-1]$ & $D_4\!:\!3^{(0)}+D_4\!:\!3^{(1)}+D_4\!:\!111^{(0)}+D_4\!:\!111^{(1)} - D_4\!:\!21^{(0)} - D_4\!:\!21^{(1)}$\\ \hline
9 & 3 & $1^{(i)}$ & $\phi_{1,0}^{(i)}+ \phi_{1,24}^{(i)}-\phi_{2,4}^{''(i)}-\phi_{2,16}^{'(i)}+
\phi_{1,12}^{''(i)}+\phi_{1,12}^{'(i)}-2\phi_{4,1}^{(i)}-
2\phi_{4,13}+2\phi_{8,3}^{'(i)}+2\phi_{8,9}^{''(i)}-
\phi_{2,4}^{'(i)}-\phi_{2,16}^{''(i)}+\phi_{4,8}^{(i)}+
3 \text{ }^2\!E_6[1]^{(i)}-2 \phi_{4,7}^{''(i)}-2 \phi_{4,7}^{'(i)}+2 \phi_{8,3}^{''(i)}+2 \phi_{8,9}^{'(i)}-2 \phi_{16,5}^{(i)} -3 \text{ } ^2\!E_6[\theta]^{(i)}- 3 \text{ } ^2\!E_6[\theta^2]^{(i)}$\\ \hline
$23^*, 27^*$ & 3 & $(\phi_2 \otimes \phi_2)^{(i)}$ & $((013,123) \otimes \phi_2 \otimes \phi_2)^{(i)}+ ((0123,1234) \otimes \phi_2 \otimes \phi_2)^{(i)}+$ \\
& & & $((02,13) \otimes \phi_2 \otimes \phi_2)^{(i)}+((01,23) \otimes \phi_2 \otimes \phi_2)^{(i)}+((1,3) \otimes \phi_2 \otimes \phi_2)^{(i)} + ((0,4) \otimes \phi_2 \otimes \phi_2)^{(i)}$ \\ \cline{2-4}
& 3 & $\phi_{11} \otimes \phi_2$ & $((013,123) \otimes \phi_{11} \otimes \phi_2)+ ((0123,1234) \otimes \phi_{11} \otimes \phi_2)+$ \\
& & & $((02,13) \otimes \phi_{11} \otimes \phi_2)+((01,23) \otimes \phi_{11} \otimes \phi_2)+$ \\ 
& & & $((1,3) \otimes \phi_{11} \otimes \phi_2) + ((0,4) \otimes \phi_{11} \otimes \phi_2)$ \\ \cline{2-4}
& 3 & $(\phi_{11} \otimes \phi_{11})^{(i)}$ & $((013,123) \otimes \phi_{11} \otimes \phi_{11})^{(i)}+ ((0123,1234) \otimes \phi_{11} \otimes \phi_{11})^{(i)}+$ \\
& & & $((02,13) \otimes \phi_{11} \otimes \phi_{11})^{(i)}+((01,23) \otimes \phi_{11} \otimes \phi_{11})^{(i)}+$ \\
& & & $((1,3) \otimes \phi_{11} \otimes \phi_{11})^{(i)} + ((0,4) \otimes \phi_{11} \otimes \phi_{11})^{(i)}$ \\ \hline
25, 29 & 3 & $\phi_{11}^{(i)}$ & $((123,0) \otimes \phi_{11})^{(i)} + ((01234,123) \otimes \phi_2)^{(i)} + ((13,) \otimes \phi_{11})^{(i)} + ((0123,13) \otimes \phi_{11})^{(i)} + ((04,) \otimes \phi_{11})^{(i)} + ((012,3) \otimes \phi_{11})^{(i)}$ \\ \cline{2-4}
& 3 & $\phi_2^{(i)}$ & $((123,0) \otimes \phi_2)^{(i)} + ((01234,123) \otimes \phi_2)^{(i)} + ((13,) \otimes \phi_2)^{(i)} +$ \\
& & & $((0123,13) \otimes \phi_2)^{(i)} + ((04,) \otimes \phi_2)^{(i)} + ((012,3) \otimes \phi_2)^{(i)}$ \\ \hline \hline
1 & 4 & $1^{(i)}$ & 
$\phi_{1^8}^{(i)}-\phi_{21^6}^{(i)}+2\phi_{2^4}^{(i)}+\phi_{31^5}^{(i)}-2\phi_{3221}^{(i)}+2\phi_{332}^{(i)}-\phi_{41^4}^{(i)}+2\phi_{4211}^{(i)}-2\phi_{431}^{(i)}+2 \phi_{44}^{(i)}-\phi_{51^3}^{(i)}+\phi_{611}^{(i)}-\phi_{71}^{(i)}+\phi_{8}^{(i)}$ \\ \hline
2 & 4 & $\phi_{22}^{(i)}$ & $- \phi_{221^4}^{(i)}+\phi_{2^311}^{(i)}-\phi_{53}^{(i)}+\phi_{62}^{(i)}$ \\ \hline
4 & 4 & $1^{(i)}$ & $\phi_{1^8}^{(i)}+ \phi_{21^6}^{(i)}+2\phi_{2^4}^{(i)}-\phi_{31^5}^{(i)}+2\phi_{3221}^{(i)}
-2\phi_{332}^{(i)}-\phi_{41^4}^{(i)}-2\phi_{4211}^{(i)}
+2\phi_{44}^{(i)}-\phi_{51^3}^{(i)}-\phi_{611}^{(i)}+\phi_{71}^{(i)}+\phi_8^{(i)}$ \\ \hline
5 & 4 & $\phi_{22}^{(i)}$ & $-\phi_{221^4}^{(i)}-\phi_{2^311}^{(i)}+\phi_{53}^{(i)}+\phi_{62}^{(i)}$ \\  \hline
7 & 4 & $1^{(i)}$ & $\phi_{1,0}^{(i)}+\phi_{1,36}^{(i)}+
2\phi_{10,9}^{(i)}+2\phi_{6,1}^{(i)}+2\phi_{6,25}^{(i)}
+\phi_{15,5}^{(i)}+\phi_{15,17}^{(i)}-3\phi_{15,4}^{(i)}
-3\phi_{15,16}^{(i)}+4\phi_{80,7}^{(i)}+
2\phi_{90,8}^{(i)}-3\phi_{81,6}^{(i)}-3\phi_{81,10}^{(i)}
-2 \text{ }D_4\!:\!3-2 \text{ }D_4\!:\!111-4 \text{ }D_4\!:\!21$ \\ \hline
8 & 4 & $\phi_{22}^{(i)}$ & $\phi_{20,2}^{(i)}-\phi_{20,20}^{(i)}-\phi_{60,5}^{(i)}
+\phi_{60,11}^{(i)}$\\ \hline 
10 & 4 & $1^{(i)}$ & $\phi_{1,0}^{(i)}+\phi_{1,24}^{(i)}-
2\phi_{6,6}^{'(i)}-\phi_{9,2}^{(i)}
-\phi_{9,10}^{(i)}+3\phi_{1,12}^{''(i)} 
+3\phi_{1,12}^{'(i)}+2\phi_{8,3}^{'(i)}
+2\phi_{8,9}^{''(i)}-2\phi_{2,4}^{'(i)}
-2\phi_{2,16}^{''(i)} -4 \text{ } ^2\!E_6[1]^{(i)} - 2 \phi_{6,6}^{''(i)}-3\phi_{9,6}^{''(i)}
-3\phi_{9,6}^{'(i)} + 4 \phi_{16,5}^{(i)}$\\ \hline
11 & 4 &  $\phi_{22}^{(i)}$ & $\phi_{4,1}^{(i)}-\phi_{4,13}^{(i)}
-\phi_{4,7}^{''(i)} + \phi_{4,7}^{'(i)}$\\ \hline
\end{longtable}

\subsection{$e$-cuspidal pairs of $E_8$}
Let $G$ be a simple, simply connected of type $E_8$ defined over $\mathbb{F}_q$ with Frobenius endomorphism $F: G \to G$. Here, $e$ is relevant for some quasi-isolated $1 \neq s \in G^{*F}$ if and only if $e \in \{1,2,3,4,5,6,7,9,10,12,14,$ $18,20\}$. By Remark \ref{e=1 and e=2} and Ennola duality, it remains to determine the $e$-cuspidal pairs for $e \in \{3,4,5,7,9,12,20\} $. 

\begin{thm} \label{e-cuspidal pairs $E_8$ good l}
Let $e=e_\ell(q) \in \{3,4,5,7,9,12,20\}$ and let $1 \neq s \in G^{*F}$ be semisimple, quasi-isolated and such that $e$ is relevant for $s$. Then the $e$-cuspidal pairs $(L, \lambda)$ of $G$ with $\lambda \in \mathcal{E}(L^F,s)$ (up to $G^F$-conjugacy), and the order of their relative Weyl groups $W=W_{G^F}(L, \lambda)$ are as indicated in Table \ref{Table: Quasi-isolated blocks of E_8(q) good l}. In particular, generalized $e$-Harish-Chandra theory holds in $\mathcal{E}(G^F,s)$ for every quasi-isolated semisimple element $1 \neq s \in G^{*F}$.
\end{thm}
\small

\begin{longtable}{|c|l|c|l|l|l|c|c|}
\caption{Quasi-isolated blocks of $E_8(q)$} \\ \hline
\label{Table: Quasi-isolated blocks of E_8(q) good l}
No. & $C_{G^*}(s)^F$  & $e$ & $L^F$ & $C_{L^*}(s)^F$ & $\lambda$  & $|W|$ & $l(B)$ \\ \hline \hline

1 & $E_7(q)A_1(q)$ & 3 & $\Phi_3^3. A_2(q)$ & $\Phi_1\Phi_3^3.A_1(q)$ & $\phi_1, \phi_{22}$ & 1296 & 48 \\
2 & & 3 &  $\Phi_3.^3\!D_4(q)A_2(q)$ & $\Phi_1\Phi_3. ^3\!D_4(q)A_1(q)$ & $^3\!D_4[-1] \otimes \phi_{11}$ & 6 & 6 \\
& & 3 & & & $^3\!D_4[-1] \otimes \phi_2$ & 6 & 6\\
3 & & 3 & $\Phi_3. E_6(q)$ & $\Phi_3. A_5(q)A_1(q)$ & 4 chars. & 6 & 6 \\
4 & & 3 & $G^F$ & $C_{G^*}(s)^F$ & 20 chars. & 1 & 1\\ \hline

5 & $E_6(q)A_2(q)$ & 3 & $\Phi_3^4$ & $\Phi_3^4$ & 1 & 1944 & 72 \\ 
6 & & 3 & $\Phi_3^2. ^3\!D_4(q)$ & $\Phi_3^2. ^3\!D_4(q)$ & $^3\!D_4[-1]$ & 9 & 9\\
7 & & 3 & $\Phi_3.E_6(q)$ & $\Phi_3.E_6(q)$ & $\phi_{81,6}, \phi_{81,10},$ & 3 & 3\\ 
 & & 3 &  &  & $\phi_{90,8}$ & 3 & 3\\ \hline
8 & $^2\!E_6(q)A_2(q)$ & 3 & $\Phi_3^2.^3\!D_4(q)$ & $\Phi_3^2 \Phi_6.^2\!A_2(q)$ & $\phi_{111}, \phi_{21}, \phi_3$ & 72 & 21 \\
9 & & 3 & $G^F$ & $C_{G^*}(s)^F$ & 27 chars. & 1 & 1 \\ \hline  
10 & $D_5(q)A_3(q)$ & 3 & $\Phi_3. A_2(q)^2$ & $\Phi_1^2 \Phi_3^2 A_1(q)^2$ & $\phi_{11} \otimes \phi_{11},$ & 18 & 18\\
& & 3 & & & $\phi_2 \otimes \phi_2$ & 18 & 18\\
11 & & 3 & & & $\phi_{11} \otimes \phi_2$ & 9 & 9 \\
12 & & 3 & $\Phi_3.E_6(q)$ & $\Phi_1 \Phi_3.A_3(q) A_1(q)^2$ & 4 chars. & 6 & 6\\ 
13 & & 3 & & & 4 chars. & 3 & 3 \\
14 & & 3 & & $\Phi_1\Phi_3.D_5(q)$ & 5 chars. & 3 & 3 \\
15 & & 3 & $G^F$ & $C_{G^*}(s)^F$ & 4 chars. & 1 & 1 \\ \hline
16 & $^2\!D_5(q)^2\!A_2(q)$ & 3 & $\Phi_3. E_6(q)$ & $\Phi_1 \Phi_3. ^2\!A_3(q)A_1(q^2)$ & 10 chars. & 6 & 6 \\ 
17 & & 3 & $G^F$ & $C_{G^*}(s)^F$ & 40 chars. & 1 & 1 \\ \hline
18 & $A_4(q)^2$ & 3 &  $\Phi_3^2.A_2(q)^2$ & $\Phi_1^2 \Phi_3^2.A_1(q)^2$ & 4 chars. & 9 & 9\\
19 & & 3 & $\Phi_3.E_6(q)$ & $\Phi_1 \Phi_3.A_4(q)A_1(q)$ & 4 chars. & 3 & 3 \\ 
20 & & 3 & $G^F$ & $C_{G^*}(s)^F$ & $\phi_{311} \otimes \phi_{311}$ & 1 & 1\\ \hline
21 & $A_5(q)A_2(q)A_1(q)$ & 3 & $\Phi_3^3.A_2(q)$ & $\Phi_1\Phi_3^3.A_1(q)$ & $\phi_{11}, \phi_2$ & 54 & 27\\
22 & & 3 & $\Phi_3.E_6(q)$ & $\Phi_3.A_5(q)A_1(q)$ & 4 chars. & 3 & 3 \\ \hline
23 & $^2\!A_5(q)^2\!A_2(q)A_1(q)$ & 3 & $\Phi_3.^3\!D_4(q)A_2(q)$ & $\Phi_1 \Phi_3 \Phi_6.^2\!A_2(q)A_1(q)
$ & 6 chars. & 6 & 6 \\
24 & & 3 & $G^F$ & $C_{G^*}(s)^F$ & 30 chars. & 1 & 1 \\ \hline
25 & $A_7(q)A_1(q)$ & 3 & $\Phi_3^2.A_2(q)^2$ & $\Phi_1^2\Phi_3^2.A_1(q)^2$ & 4 chars. & 18 & 9 \\
26 & & 3 & $\Phi_3.E_6(q)$ & $\Phi_1\Phi_3. A_4(q)A_1(q)$ & $\phi_{11} \otimes \phi_{311}$, &  3 & 3\\
 & & 3 & & & $\phi_2 \otimes \phi_{311}$ & 3 & 3\\ 
27 & & 3 & $G^F$ & $C_{G^*}(s)^F$ & $\phi_{4211} \otimes \phi_{11}$, & 1 & 1 \\
& & 3 & & & $\phi_{4211} \otimes \phi_2$ & 1 & 1 \\ \hline
28 & $^2\!A_7(q)A_1(q)$ & 3 & $\Phi_3.^3\!D_4(q)A_2(q)$ & $\Phi_1 \Phi_2 \Phi_3 \Phi_6.A_1(q)^2 $ & 4 chars. & 6 & 6 \\ 
29 & & 3 & $G^F$ & $C_{G^*}(s)^F$ & 20 chars. & 1 & 1 \\ \hline
30 & $A_8(q)$ & 3 & $\Phi_3^3.A_2(q)$ & $\Phi_1^2\Phi_3^3$ & 1 & 162 & 22\\
31 & & 3 & $\Phi_3.E_6(q)$ & $\Phi_1\Phi_3.A_5(q)$ & $\phi_{42}, \phi_{2211}$ & 3 & 3 \\ 
32 & & 3 & $G^F$ & $C_{G^*}(s)^F$ & $\phi_{32211}, \phi_{531}$ & 1 & 1\\ \hline
33 & $^2\!A_8(q)$ & 3 & $\Phi_3.^3\!D_4(q)A_2(q)$ & $\Phi_1 \Phi_2 \Phi_3 \Phi_6.^2\!A_2(q)$ & $\phi_{111}, \phi_{21}, \phi_3 $ & 6 & 6 \\ 
34 & & 3 & $G^F$ & $C_{G^*}(s)^F$ & 12 chars. & 1 & 1 \\ \hline
35 & $D_8(q)$ & 3 & $\Phi_3^2.A_2(q)^2$ & $\Phi_1^2 \Phi_3^2.A_1(q)^2$ & $\phi_{11} \otimes \phi_{11},$ & 72 & 27\\
& & 3 & & & $\phi_2 \otimes \phi_2$, & 72 & 27\\
36 & & 3 & & & $\phi_{11} \otimes \phi_2$ & 36 & 18\\
37 & & 3 & $\Phi_3.E_6(q)$ & $\Phi_1 \Phi_3.D_5(q)$ & 5 chars. & 6 & 6 \\ 
38 & & 3 & $G^F$ & $C_{G^*}(s)^F$ & 18  chars. & 1 & 1 \\ \hline \hline
1 & $E_7(q)A_1(q)$ & 4 & $\Phi_4^2.D_4(q)$ & $\Phi_4^2.A_1(q)^4$ & 4 chars. & 96 & 16\\
2 & & 4 & & & 12 chars. & 32 & 14\\ 
3 & & 4 & $G^F$ & $C_{G^*}(s)^F$ & 32 chars. & 1 & 1 \\ \hline
4 & $E_6(q)A_2(q)$ & 4 & $\Phi_4^2.D_4(q)$ & $\Phi_1^2 \Phi_4^2. A_2(q)$ & $\phi_{111}, \phi_{21}, \phi_3$ & 96 & 16\\ 
5 & & 4 & $\Phi_4.^2\!D_6(q)$ & $\Phi_1.\Phi_4.^2\!A_3(q)A_2(q)$ & $\phi_{22} \otimes \phi_{111}$, & 4 & 4  \\
& & 4 & & & $\phi_{22} \otimes \phi_{21}$, & 4 & 4\\
& & 4 & & &  $\phi_{22} \otimes \phi_3$  & 4& 4\\ 
6 & & 4 & $G^F$ & $C_{G^*}(s)^F$ & 30 chars. & 1 & 1 \\ \hline
7 & $^2\!E_6(q)^2\!A_2(q)$ & 4 & $\Phi_4^2.D_4(q)$ & $\Phi_2^2 \Phi_4^2. ^2\!A_2(q)$ & $\phi_{111}, \phi_{21}, \phi_3$ & 96 & 16\\ 
8 & & 4 & $\Phi_4.^2\!D_6(q)$ & $\Phi_2 \Phi_4.A_3(q)^2\!A_2(q)$ & $\phi_{22} \otimes \phi_{111}$, & 4 & 4 \\
& & 4 & & & $\phi_{22} \otimes \phi_{21},$ & 4 & 4\\
& & 4 & & & $\phi_{22} \otimes \phi_3$ & 4 & 4\\
9 & & 4 & $G^F$ & $C_{G^*}(s)^F$ & 30 chars. & 1 & 1 \\  \hline
10 & $D_5(q)A_3(q)$ & 4 & $\Phi_4^3.A_1(q^2)$ & $\Phi_1 \Phi_2 \Phi_4^3$ & 1 & 128 & 56\\ 
11 & & 4 & $\Phi_4^2.D_4(q)$ & $\Phi_1 \Phi_4^2. A_3(q)$ & $\phi_{22}$ & 32 & 14\\ 
12 & & 4 & $\Phi_4^2.D_4(q)$ & $\Phi_2 \Phi_4^2. ^2\!A_3(q)$ & $\phi_{22}$ & 16 & 16\\ 
13 & & 4 & $\Phi_4.^2\!D_6(q)$ & $\Phi_4.^2\!A_3(q)A_3(q)$ & $\phi_{22} \otimes \phi_{22}$ & 4 & 4 \\
14 & & 4 & $\Phi_4.^2\!D_6(q)$ & $\Phi_2 \Phi_4.D_5(q)$ & $(12,04)$,  & 4 & 4 \\
& & 4 & & & $(123,014)$ & 4 & 4\\
15 & & 4 & $G^F$ & $C_{G^*}(s)^F$ & 2 chars. & 1 & 1 \\ \hline
16 & $^2\!D_5(q)^2\!A_3(q)$ & 4 & $\Phi_4^3.A_1(q^2)$ & $\Phi_1 \Phi_2 \Phi_4^3$ & 1 & 128 & 56 \\ 
17 & & 4 & $\Phi_4^2.D_4(q)$ & $\Phi_2 \Phi_4^2.^2\!A_3(q)$ & $\phi_{22}$ & 32 & 14\\
18 & & 4 & $\Phi_4^2.D_4(q)$ & $\Phi_1 \Phi_4^2.A_3(q)$ & $\phi_{22}$ & 16 & 16\\
19 & & 4 & $\Phi_4.^2\!D_6(q)$ & $\Phi_4. ^2\!A_3(q)A_3(q)$ & $\phi_{22} \otimes \phi_{22}$ & 4 & 4\\
20 & & 4 & $\Phi_4.^2\!D_6(q)$ & $\Phi_1 \Phi_4.^2\!D_5(q)$ & $(014,2)$,  &4 & 4\\
& & 4 & & & $(0134,12)$ & 4 & 4\\
21 & & 4 & $G^F$ & $C_{G^*}(s)^F$ & 2 chars. & 1 & 1 \\ \hline
22 & $A_4(q)^2$ & 4 & $\Phi_4^2.A_1(q^2)^2$ & $\Phi_1^2 \Phi_2^2 \Phi_4^2$ & 1 & 16 & 16\\
23 & & 4 & $\Phi_4.^2\!D_6(q)$ & $\Phi_1 \Phi_2 \Phi_4.A_4(q)$ & 6 chars. & 4 & 4 \\
24 & & 4 & $G^F$ & $C_{G^*}(s)^F$ & 9 chars. & 1 & 1 \\ \hline
25 & $^2\!A_4(q)^2$ & 4 & $\Phi_4^2.A_1(q^2)^2$ & $\Phi_1^2 \Phi_2^2 \Phi_4^2$ & 1 & 16 & 16\\
26 & & 4 & $\Phi_4.^2\!D_6(q)$ & $\Phi_1 \Phi_2 \Phi_4.^2\!A_4(q)$ & 6 chars. & 4 & 4\\
27 & & 4 & $G^F$ & $C_{G^*}(s)^F$ & 9 chars. & 1 & 1 \\ \hline
28 & $A_4(q^2)$ & 4 & $\Phi_4^4$ & $\Phi_4^4$ & 1 & 120 & 7\\ \hline
29 & $A_5(q)A_2(q)A_1(q)$ & 4 & $\Phi_4.^2\!D_6(q)$ & $\Phi_1 \Phi_2 \Phi_4.A_2(q)A_1(q)^2$ & 12 chars. & 4 & 4\\ 
30 & & 4 & $G^F$ & $C_{G^*}(s)^F$ & 18 chars. & 1 & 1 \\ \hline
31 & $^2\!A_5(q) ^2\!A_2(q)A_1(q)$ & 4 & $\Phi_4.^2\!D_6(q)$ & $\Phi_1 \Phi_2 \Phi_4.^2\!A_2(q)A_1(q)^2$ & 12 chars. & 4 & 4 \\ 
32 & & 4 & $G^F$ & $C_{G^*}(s)^F$ & 18 chars. & 1 & 1 \\ \hline
33 & $A_7(q)A_1(q)$ & 4 & $\Phi_4^2.D_4(q)$ & $\Phi_1 \Phi_2^2 \Phi_4^2.A_1(q)$ & $\phi_{11}, \phi_2$ & 32 & 14 \\ 
34 & & 4 & $\Phi_4.^2\!D_6(q)$ & $\Phi_1 \Phi_2 \Phi_4.A_3(q)A_1(q)$ & $\phi_{11} \otimes \phi_{22}$, & 4 & 4\\
& & & & &  $\phi_{2} \otimes \phi_{22}$ &  4 & 4\\
35 & & 4 & $G^F$ & $C_{G^*}(s)^F$ & 8 chars. & 1 & 1 \\ \hline
36 & $^2\!A_7(q)A_1(q)$ & 4 & $\Phi_4^2.D_4(q)$ & $\Phi_1^2 \Phi_2 \Phi_4^2.A_1(q)$ & $\phi_{11}, \phi_2$ & 32 & 14\\
37 & & 4 & $\Phi_4.^2\!D_6(q)$ & $\Phi_1 \Phi_2 \Phi_4. ^2\!A_3(q)A_1(q)$ & $\phi_{11} \otimes \phi_{22}$, & 4 & 4\\
& & & & & $\phi_2 \otimes \phi_{22}$ & 4 & 4\\
38 & & 4 & $G^F$ & $C_{G^*}(s)^F$ & 8 chars. & 1 & 1 \\ \hline
39 & $A_8(q)$ & 4 & $\Phi_4^2.A_1(q^2)^2$ & $\Phi_1^2 \Phi_2^2 \Phi_4^2$ & 1 & 32 & 14 \\ 
40 & &  4 & $\Phi_4 ^2\!D_6(q)$ & $\Phi_1 \Phi_2 \Phi_4.A_4(q)$ & $\phi_{2111}, \phi_{311}, \phi_{41}$ & 4 & 4\\ 
41 & & 4 & $G^F$ & $C_{G^*}(s)^F$ & 8 chars. & 1 & 1 \\ \hline
42 & $^2\!A_8(q)$ & 4 & $\Phi_4^2.A_1(q^2)^2$ & $\Phi_1^2 \Phi_2^2 \Phi_4^2$ & 1 & 32 & 14\\ 
43 & & 4 & $\Phi_4.^2\!D_6(q)$ & $\Phi_1 \Phi_2 \Phi_4.^2\!A_4(q)$ & $\phi_{2111}, \phi_{311}, \phi_{41}$ & 4 & 4\\ 
44 & & 4 & $G^F$ & $C_{G^*}(s)^F$ & 8 chars. & 1 & 1 \\ \hline 
45 & $D_8(q)$ & 4 & $\Phi_4^2$ & $\Phi_4$ & 1 & 3072 & 60 \\
46 & & 4 & $\Phi_4^2.D_4(q)$ & $\Phi_4^2.D_4(q)$ & 4 chars. & 32 & 14 \\
47 & & 4 & $G^F$ & $C_{G^*}(s)^F$ & 4 chars. & 1 & 1 \\ \hline \hline
1 & $E_7(q)A_1(q)$ & 5 & $\Phi_5. A_4(q)$ & $\Phi_1 \Phi_5. A_2(q)A_1(q)$ & 6 chars. & 10 & 10 \\ 
2 & & 5 & $G^F$ & $C_{G^*}(s)^F$ & 92 chars. & 1 & 1 \\ \hline
3 & $E_6(q)A_2(q)$ & 5 & $\Phi_5.A_4(q)$ & $\Phi_1 \Phi_5. A_2(q) A_1(q)$ & 6 chars. & 5 & 5 \\ 
4 & & 5 & $G^F$ & $C_{G^*}(s)^F$ & 60 chars.  & 1 & 1 \\ \hline
5 & $D_5(q)A_3(q)$ & 5 & $\Phi_5.A_4(q)$ & $\Phi_1 \Phi_5.A_3(q)$ & 5 chars. & 5 & 5 \\ 
6 & & 5 & $G^F$ & $C_{G^*}(s)^F$ & 75 chars. & 1 & 1 \\ \hline
7 &  $A_4(q)^2$ & 5 & $\Phi_5^2$ & $\Phi_5^2$ & 1 & 25 & 25 \\
8 & & 5 & $\Phi_5. A_4(q)$ &  $\Phi_5.A_4(q)$ & 4 chars. & 5 & 5 \\ 
9 & & 5 & $G^F$ & $C_{G^*}(s)^F$ & chars. & 1 & 1 \\ \hline
10 & $A_5(q)A_2(q)A_1(q)$ & 5 & $\Phi_5.A_4(q)$ & $\Phi_1 \Phi_5.A_2(q) A_1(q)$ & 6 chars. & 5 & 5 \\ 
11 & & 5 & $G^F$ & $C_{G^*}(s)^F$ & 36 chars. & 1 & 1 \\ \hline
12 & $A_7(q)A_1(q)$ & 5 & $\Phi_5.A_4(q)$ & $\Phi_1 \Phi_5. A_2(q)A_1(q)$ & 6 chars. & 5 & 5 \\ 
13 & & 5 & $G^F$ & $C_{G^*}(s)^F$ & 14 chars. & 1 & 1 \\ \hline
14 & $A_8(q)$ & 5 & $\Phi_5.A_4(q)$ & $\Phi_1 \Phi_5. A_3(q)$ & 5 chars. & 5 & 5 \\
15 & & 5 & $G^F$ & $C_{G^*}(s)^F$ & 5 chars. & 1 & 1 \\ \hline
16 & $D_8(q)$ & 5 & $\Phi_5.A_4(q)$ & $\Phi_1 \Phi_5.A_3(q)$ & 5 chars. & 10 & 10 \\
17 & & 5 & $G^F$ & $C_{G^*}(s)^F$ & 70 chars. & 1 & 1 \\ \hline \hline
1 & $E_7(q)A_1(q)$ & 7 &  $\Phi_1 \Phi_7.A_1(q)$ & $\Phi_1 \Phi_7.A_1(q)$ & $\phi_{11} \phi_2$ & 14 & 14 \\ 
2 & & 7 & $G^F$ & $C_{G^*}(s)^F$ & 124 chars. & 1 & 1\\ \hline
3 & $A_7(q)A_1(q)$ & 7 &  $\Phi_1 \Phi_7.A_1(q)$ & $\Phi_1 \Phi_7.A_1(q)$ & $\phi_{11} ,\phi_2$ & 7 & 7 \\ 
4 & & 7 & $G^F$ & $C_{G^*}(s)^F$ & 30 chars. & 1 & 1 \\ \hline
5 & $A_8(q)$ & 7 & $\Phi_1 \Phi_7.A_1(q)$ & $\Phi_1 \Phi_7.A_1(q)$ & $\phi_{11}, \phi_2$ & 7 & 7 \\ 
6 & & 7 & $G^F$ & $C_{G^*}(s)^F$ & 16 chars. & 1 & 1 \\ \hline
7 & $D_8(q)$ & 7 & $\Phi_1 \Phi_7.A_1(q)$ & $\Phi_1^2 \Phi_7$ & 1 & 14 & 14\\ 
8 & & 7 & $G^F$ & $C_{G^*}(s)^F$ & 104 chars. & 1 & 1\\ \hline \hline
1 & $E_7(q)A_1(q)$ & 8 & $\Phi_8. ^2\!D_4(q)$ &  $\Phi_8. A_1(q)^2A_1(q^2)$ & 8 chars. & 8 & 8 \\
2 & & 8 & $G^F$ & $C_{G^*}(s)^F$ & 88 chars. & 1 & 1\\  \hline
3 & $E_6(q)A_2(q)$ & 8 & $\Phi_8.^2\!D_4(q)$ & $\Phi_1 \Phi_2 \Phi_8.A_2(q)$ &  $\phi_{111}, \phi_{21}, \phi_3$ & 8 & 8 \\
4 & & 8 & $G^F$ & $C_{G^*}(s)^F$ & 66 chars. & 1 & 1\\ \hline
5 & $^2\!E_6(q)^2\!A_2(q)$ & 8 & $\Phi_8.^2\!D_4(q)$ & $\Phi_1 \Phi_2 \Phi_8. ^2\!A_2(q)$ & $\phi_{111}, \phi_{21}, \phi_3$ & 8 & 8\\
6 & & 8 & $G^F$ & $C_{G^*}(s)^F$ & 66 chars. & 1 & 1\\ \hline
7 & $D_5(q)A_3(q)$ & 8 & $\Phi_8.^2\!D_4(q)$ & $\Phi_2 \Phi_8.A_3(q)$ & 5 chars. & 8 & 8\\ 
8 & & 8 & $G^F$ & $C_{G^*}(s)^F$ & 60 chars. & 1 & 1 \\ \hline
9 & $^2\!D_5(q)^2\!A_3(q)$ & 8 & $\Phi_8.^2\!D_4(q)$ & $\Phi_1 \Phi_8.^2\!A_3(q)$ & 5 chars. & 8 & 8\\ 
10 & & 8 & $G^F$ & $C_{G^*}(s)^F$ & 60 chars. & 1 & 1\\ \hline
11 & $^2\!A_4(q^2)$ & 8 & $\Phi_8.A_1(q^4)$ & $\Phi_1 \Phi_2 \Phi_4 \Phi_8$ & 1 & 4 & 4\\
12 & & 8 & $G^F$ & $C_{G^*}(s)^F$ & 3 chars. & 1 & 1 \\  \hline \hline
1 & $E_7(q)A_1(q)$ & 9 & $\Phi_9.A_2(q)$ & $\Phi_1 \Phi_9.A_1(q)$ & $\phi_{11}, \phi_2$ & 18 & 18\\ 
2 & & 9 & $G^F$ & $C_{G^*}(s)^F$ & 116 chars. & 1 & 1\\ \hline
3 & $E_6(q)A_2(q)$ & 9 & $\Phi_9.A_2(q)$ & $\Phi_9.A_2(q)$ & $\phi_{111}, \phi_{21}, \phi_3$ & 9 & 9\\ 
4 & & 9 & $G^F$ & $C_{G^*}(s)^F$ & 63 chars. & 1 & 1\\ \hline
5 & $A_8(q)$ & 9 & $\Phi_9.A_2(q)$ & $\Phi_3 \Phi_9$ & 1 & 9 & 9\\
6 & & 9 & $G^F$ & $C_{G^*}(s)^F$ & 21 chars. & 1 & 1\\ \hline \hline
1 & $E_7(q)A_1(q)$ & 12 & $\Phi_{12}. ^3\!D_4(q)$ & $\Phi_{12}.A_1(q) A_1(q^3)$ & 4 chars. & 12 & 12\\ 
2 & & 12 & $G^F$ & $C_{G^*}(s)^F$ & 124 chars. & 1 & 1\\ \hline
3 & $E_6(q)A_2(q)$ & 12 & $\Phi_{12}. ^3\!D_4(q)$ & $\Phi_3 \Phi_{12}. A_2(q)$ & $\phi_{111}, \phi_{21}, \phi_3$ & 12 & 12\\ 
4 & & 12 & $G^F$ & $C_{G^*}(s)^F$ & 54 chars. & 1 & 1\\ \hline
5 & $^2\!E_6(q)^2\!A_2(q)$ & 12 & $\Phi_{12}. ^3\!D_4(q)$ & $\Phi_6 \Phi_{12}. ^2\!A_2(q)$ & $\phi_{111}, \phi_{21}, \phi_3$ & 12 & 12\\ 
6 & & 12 & $G^F$ & $C_{G^*}(s)^F$ & 54 chars. & 1 & 1\\ \hline
7 & $^2\!A_4(q^2)$ & 12 & $\Phi_{12}. ^2\!A_2(q^2)$ & $\Phi_4 \Phi_{12}. A_1(q^2)$  & $\phi_{11}, \phi_2$ & 3 & 3\\ 
8 & & 12 & $G^F$ & $C_{G^*}(s)^F$ & $\phi_{311}$ & 1 & 1\\ \hline
9 & $D_8(q)$ & 12 & $\Phi_{12}. ^2\!A_2(q^2)$ & $\Phi_4 \Phi_{12}. A_1(q^2)$ & $\phi_{11}, \phi_2$ & 12 & 12\\ 
10 & & 12 & $G^F$ & $C_{G^*}(s)^F$ & 96 chars. & 1 & 1\\ \hline \hline
1 & $^2\!A_4(q^2)$ & 20 & $\Phi_{20}$ & $\Phi_{20}$ & 1 & 5 & 5\\ 
2 & & 20 & $G^F$ & $C_{G^*}(s)^F$ & $\phi_{221}, \phi_{32}$ & 1 & 1\\ \hline
\end{longtable}

\normalsize

\begin{proof}
Similar to the proof of Theorem \ref{e-cuspidal pairs F4 good l}.
\end{proof}

\subsection{$e$-cuspidal pairs of $G_2(q)$ and $^3\!D_4(q)$}

Let $G$ be a simple, simply connected of type $G_2$ or $D_4$ defined over $\mathbb{F}_q$ with Frobenius endomorphism $F: G \to G$ such that $G^F=G_2(q)$ or $G^F=\text{}^3\!D_4(q)$. Here, $e$ is relevant for some quasi-isolated $1 \neq s \in G^{*F}$ if and only if $e \in \{1,2,3,6\}$. It remains to determine the $e$-cuspidal pairs for $e=3$.

\begin{thm} \label{Conjecture G2 3D4 good primes}
Let $e=3$. For any quasi-isolated semisimple element $1 \neq s \in G^{*F}$, the $e$-cuspidal pairs $(L, \lambda)$ of $G$ with $\lambda \in \mathcal{E}(L^F,s)$ (up to $G^F$-conjugacy), and the order of their relative Weyl groups $W=W_{G^F}(L, \lambda)$ are as indicated in Table \ref{Table: Quasi-isolated blocks of G_2(q) and 3D4(q) good l}. In particular, generalized $e$-Harish-Chandra theory holds in $\mathcal{E}(G^F,s)$ for every quasi-isolated semisimple element $1 \neq s \in G^{*F}$.
\end{thm}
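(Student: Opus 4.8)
The plan is to proceed exactly as in the proofs of Theorems \ref{e-cuspidal pairs F4 good l} and \ref{e-cuspidal pairs E6 good l}. By Remark \ref{e=1 and e=2} the pairs for $e=1,2$ are already known from \cite{Malle-Kessar}, and $e=6$ is the Ennola dual of $e=3$, so only $e=3$ needs to be treated. First I would list, via Proposition \ref{Bonnafe} together with the self-duality of $G_2$ and $D_4$, the nontrivial quasi-isolated semisimple $s\in G^{*F}$ and their centralisers; here these are small and the component group $A(s)$ is trivial, so none of the disconnected-centre complications of Section \ref{Appendix E6} intervene. For each $s$ the $e$-cuspidal pairs $(L,\lambda)$ with $\lambda\in\mathcal{E}(L^F,s)$ are then pinned down by Theorem \ref{e-cuspidal pairs}: condition (i) decides whether a proper pair can occur in the series, and, using Proposition \ref{intersection levi centraliser} and Proposition \ref{proper e-cuspidal pairs}, condition (ii) reduces the problem to listing the $e$-cuspidal unipotent characters of the centralisers $C_{L^*}^\circ(s)$, which is a finite Chevie computation. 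This yields the entries of Table \ref{Table: Quasi-isolated blocks of G_2(q) and 3D4(q) good l}, including the orders $|W_{G^F}(L,\lambda)|$ of the relative Weyl groups.

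For the final assertion I would apply Corollary \ref{e-Harish-Chandra alternative}: it suffices to show that the $3$-Harish-Chandra series $\mathcal{E}(G^F,(L,\lambda))$ of the tabulated pairs partition $\mathcal{E}(G^F,s)$, for which I compute the constituents of each $R_L^G(\lambda)$ and check that together they exhaust the series without overlap, comparing against the count $|\mathcal{E}(G^F,s)|$ furnished by Jordan decomposition. For $G_2(q)$ this step is immediate: the centralisers of the nontrivial quasi-isolated elements are of type $A$ (namely $A_1\times A_1$ and $A_2$), whose unipotent characters are all uniform, so every $\lambda\in\mathcal{E}(L^F,s)$ and hence, by transitivity of Lusztig induction \cite[11.5 Transitivity]{Digne-Michel}, every $R_L^G(\lambda)$ is uniform, and its decomposition is read off from the uniform projection formula \cite[12.12 Proposition]{Digne-Michel}.

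The work therefore concentrates on the twisted group ${}^3\!D_4(q)$, which I expect to be the main obstacle. When a pair $(L,\lambda)$ has $\lambda$ uniform the uniform-projection argument applies verbatim; for the remaining pairs I would reuse the norm argument from the proof of Theorem \ref{e-cuspidal pairs F4 good l}, where the Mackey formula gives $\|R_L^G(\lambda)\|^2=|W_{G^F}(L,\lambda)|$ and, writing $R_L^G(\lambda)=\pi_{uni}(R_L^G(\lambda))+\gamma$ with $\gamma$ orthogonal to the uniform functions, the identity $\|R_L^G(\lambda)\|^2=\|\pi_{uni}(R_L^G(\lambda))\|^2+\|\gamma\|^2$ together with an explicit basis of the non-uniform part of $\mathbb{Q}\mathcal{E}(G^F,s)$ determines $\gamma$. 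The delicate points are that the triality Frobenius must be tracked consistently through the Chevie parametrisations, that cuspidal unipotent characters of ${}^3\!D_4$-type factors (such as ${}^3\!D_4[-1]$) and their Lusztig induction are less transparent than in the split case, and that the orthogonal complement of the uniform space must be described correctly. Once all the $R_L^G(\lambda)$ are known, disjointness and exhaustion of the Harish-Chandra series are verified directly, and Corollary \ref{e-Harish-Chandra alternative} gives the theorem.
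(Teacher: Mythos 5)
Your proposal is correct and follows essentially the same route as the paper, whose proof of this theorem simply refers back to the method of Theorem \ref{e-cuspidal pairs F4 good l}: determine the pairs via Theorem \ref{e-cuspidal pairs} and Proposition \ref{intersection levi centraliser} with Chevie, then verify the partition of $\mathcal{E}(G^F,s)$ using Corollary \ref{e-Harish-Chandra alternative} and the uniform projection. In fact the difficulties you anticipate for ${}^3\!D_4(q)$ do not materialize: the only $3$-cuspidal pair there has $\lambda\in\{\phi_{11},\phi_2\}$ on an $A_1(q)$-factor, which is uniform, so the norm argument is never needed.
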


\begin{longtable}{|c|l|l|c|l|l|l|c|c|}
\caption{Quasi-isolated blocks of $G_2(q)$ and $^3\!D_4(q)$} \\ \hline
\label{Table: Quasi-isolated blocks of G_2(q) and 3D4(q) good l}
No. & $G^F$ & $C_{G^*}(s)^F$  & $e$ & $L^F$ & $C_{L^*}(s)^F$ & $\lambda$  & $|W|$ & $l(B)$ \\ \hline \hline
1 & $G_2(q)$ & $A_2(q)$ & 3 & $\Phi_3$ & $\Phi_3$ & 1 & 3 & 3\\ \hline \hline
2 & $^3\!D_4(q)$ & $A_1(q)A_1(q^3)$ & 3 & $\Phi_1 \Phi_3.A_1(q)$ & $\Phi_1 \Phi_3. A_1(q)$ & $\phi_{11}, \phi_2$ & 2 & 2\\ \hline
\end{longtable}

\begin{proof}
Similar to the proof of Theorem \ref{e-cuspidal pairs F4 good l}.
\end{proof}

\begin{center}\textsc{$^2\!E_6(2)$ and $E_8(2)$}
\end{center} 

Note that these groups do not have semisimple elements of even order. Furthermore, note that the Mackey Formula holds for $e=1$ regardless of $q$ since $1$-split Levi subgroups are contained in $F$-stable parabolic subgroups. In this case, Lusztig induction is just ordinary Harish-Chandra induction. Consequently, the proofs of the previous section still hold for $e=1$ for these groups.

\begin{prop} \label{Exceptions, Mackey not proved}
The assertion of the Theorems \ref{e-cuspidal pairs E6 good l} and \ref{e-cuspidal pairs $E_8$ good l} are still valid when $q=2$.
\end{prop}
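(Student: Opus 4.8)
The plan is to pin down exactly where the Mackey formula was used in the proofs of Theorems~\ref{e-cuspidal pairs E6 good l}, \ref{e-cuspidal pairs $E_7$ good l} and \ref{e-cuspidal pairs $E_8$ good l} and to replace it at $q=2$. In each of those proofs the Mackey formula enters \emph{only} through the identity $\|R_L^G(\lambda)\|^2=|W_{G^F}(L,\lambda)|$, which serves to determine the part $\gamma$ of $R_L^G(\lambda)$ orthogonal to the uniform functions when $\lambda$ is not uniform. The determination of the $e$-cuspidal pairs themselves (via Theorem~\ref{e-cuspidal pairs} and Proposition~\ref{intersection levi centraliser}), the relative Weyl group orders, and the uniform projection $\pi_{uni}(R_L^G(\lambda))$ are all Mackey-free, so the first assertion of each theorem, and the whole treatment of the pairs with $\lambda$ uniform, should carry over verbatim to $q=2$.

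The first reduction is to observe that for $q=2$ every maximal torus of $G^{*F}$ has odd order, since $\Phi_d(2)\mid 2^{d}-1$ is odd for every $d$; hence every semisimple element of $G^{*F}$ has odd order, and only the odd-order classes of Table~\ref{tabu:quasi-isolated elements} occur. Imposing in addition that $\ell$ be good (so $\ell\ge 5$ for $^2\!E_6,E_7$ and $\ell\ge 7$ for $E_8$) removes several values of $e$ outright; for instance $e=4$ would force $\ell=5$, which is bad for $E_8$. Comparing the surviving, odd-order pairs with Tables~\ref{table:Lusztig Induction E6 good l}, \ref{table:Lusztig Induction E7 good l} and \ref{table:Lusztig Induction E8 good l}, I expect to find that every non-uniform pair recorded there is attached to an element of even order, \emph{except} for the pairs of type $(\,\cdot\,,1^{(i)})$ and $(\,\cdot\,{}^3\!D_4(q),{}^3\!D_4[-1])$ belonging to the order-$3$ elements. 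In particular $E_7(2)$ should present no non-uniform pairs at all, and for $^2\!E_6(2)$ and $E_8(2)$ only a short, explicit list remains.

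For these surviving non-uniform pairs the key observation is that Lusztig induction is \emph{transitive} without the Mackey formula, and that the block distribution supplied by Theorem~\ref{blocks good l} is available unconditionally for good $\ell$. Whenever the $e$-split Levi $L$ is maximal among the proper $e$-split Levi subgroups — which happens exactly when $Z^\circ(L)_{\Phi_e}$ has rank $\deg\Phi_e$, the least positive possibility, so that no intermediate $\Phi_e$-torus exists — the relations $\leq_e$ and $\ll_e$ coincide at $(L,\lambda)$, and Theorem~\ref{blocks good l} then identifies $\mathcal E(G^F,(L,\lambda))$ with the $\ll_e$-series $\{\chi\mid (L,\lambda)\ll_e(G,\chi)\}$ it indexes, with no reference to $\|R_L^G(\lambda)\|^2$. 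This handles the $D_4$-type pairs $(\,\cdot\,,1^{(i)})$ and $({}^3\!D_4(q),{}^3\!D_4[-1])$ for $^2\!E_6(2)$, where $Z^\circ(L)_{\Phi_e}$ has rank $2=\deg\Phi_3=\deg\Phi_6$; remarkably it requires no knowledge of the individual constituents, so the ambiguity flagged in Remark~\ref{Problems with R_L^G} is irrelevant here. Together with the uniform pairs, whose $\leq_e$-series are computed by $\pi_{uni}$, this yields the decomposition $\mathcal E(G^F,s)=\dot\bigcup_{(L,\lambda)}\mathcal E(G^F,(L,\lambda))$ demanded by Corollary~\ref{e-Harish-Chandra alternative}.

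The one case I expect to resist this argument, and hence the main obstacle, is line~$6$ of Table~\ref{Table: Quasi-isolated blocks of E_8(q) good l} for $E_8(2)$, namely $(\Phi_3^2.{}^3\!D_4(q),{}^3\!D_4[-1])$ with $C_{G^*}(s)^F=E_6(q)A_2(q)$: here $Z^\circ(L)_{\Phi_3}$ has rank $4$, so $L$ is \emph{not} maximal and $\leq_e\neq\ll_e$ a priori. I would resolve this by transitivity, factoring $R_L^{E_8}=R_{\Phi_3.E_6(q)}^{E_8}\circ R_L^{\Phi_3.E_6(q)}$ through the intermediate $e$-split Levi $\Phi_3.E_6(q)$. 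The inner induction takes place inside a group of type $E_6(2)$, for which the Mackey formula \emph{is} known (only $^2\!E_6(2),E_7(2),E_8(2)$ are excluded in \cite{Bonnafe-Michel}), and equals $\sum_i{}^3\!D_4[-1]^{(i)}$ exactly as in line~$7$ of Table~\ref{Table:Kessar-Malle E_6 good l}; the outer Levi $\Phi_3.E_6(q)$ is now maximal in $E_8$, and equivariance under the component group should prevent cancellation between the three summands, so that the constituents of $R_L^{E_8}({}^3\!D_4[-1])$ fill the corresponding block and exhaustion follows. Verifying the maximality and no-cancellation bookkeeping from the rank data in the tables, in place of the missing norm computation, is the delicate point of the whole argument.
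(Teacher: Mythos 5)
Your reduction to the odd-order classes at $q=2$ and your inventory of which non-uniform pairs survive match the paper exactly ($^2\!E_6(2)$: types $A_2^3$ and $D_4$; $E_7(2)$: type $A_5A_2$, all uniform; $E_8(2)$: types $A_8$, $A_4^2$, $E_6\times A_2$). Your central device, however, is genuinely different from the paper's: you observe that when $L$ is maximal among proper $e$-split Levi subgroups (equivalently, $Z^\circ(L)_{\Phi_e}$ is a rank-one $\Phi_e$-torus), every $\ll_e$-chain out of $(L,\lambda)$ collapses, so $\{\chi \mid (L,\lambda)\ll_e(G,\chi)\}=\mathcal{E}(G^F,(L,\lambda))$ and Theorem \ref{blocks good l}(b) settles Proposition \ref{Prop:e-Harish-Chandra blockwise} for that pair with no computation of $R_L^G(\lambda)$ at all. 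That argument is correct and does dispose of the $^2\!E_6(2)$ non-uniform pairs for $e\in\{3,6\}$. The paper instead computes $\pi_{uni}(R_L^G(\lambda))$, uses integrality of $R_L^G(\lambda)$, and uses the fact that constituents of inductions from distinct $e$-cuspidal pairs lie in distinct blocks (Theorem \ref{blocks good l}(a)) to pin down the constituents of $R_L^G(\lambda)$ without the norm; this yields more (the actual constituents, needed later for Section 3) but is more computational.

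The gap is exactly where you flagged it: line 6 of Table \ref{Table: Quasi-isolated blocks of E_8(q) good l}, i.e.\ $(L,\lambda)=(\Phi_3^2.{}^3\!D_4(q),{}^3\!D_4[-1])$ with $C_{G^*}(s)^F=E_6(q)A_2(q)$ in $E_8(2)$. Transitivity through $M=\Phi_3.E_6(q)$ does let you compute $R_L^M(\lambda)$ (Mackey holds for untwisted $E_6(2)$), but it leaves you with $R_M^{E_8}\bigl(\sum_j\mu_j\bigr)$ where the $\mu_j$ are the constituents of $R_L^M(\lambda)$. These $\mu_j$ are \emph{not} uniform characters of $M^F$ in general (they lie in a nontrivial family of unipotent characters of $E_6$), so $R_M^{E_8}(\mu_j)$ cannot be obtained from $\pi_{uni}$, and the norm identity for $R_M^{E_8}$ is precisely what is unavailable at $q=2$. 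Your appeal to ``equivariance under the component group'' to exclude cancellation has no content here: $s$ is isolated in $E_8$ with $A(s)=1$, so there is no component group acting, and nothing in your argument rules out a constituent appearing in $R_M^{E_8}(\mu_1)$ and $R_M^{E_8}(\mu_2)$ with opposite signs, nor does it identify the constituents of the sum. The paper closes this case by the same mechanism as for $^2\!E_6(2)$: compute $\pi_{uni}(R_L^{E_8}(\lambda))$ directly, observe that adding a rational combination of the non-uniform basis elements must produce a \emph{generalized character} whose constituents avoid the (already determined) character sets of the other blocks in $\mathcal{E}_\ell(G^F,s)$, and check that these constraints already force the constituent set to be the complement of the other $e$-Harish-Chandra series. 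To repair your proof you would need to supply an argument of this kind (or show that $\sum_j\mu_j$ is uniform, as the paper does for the analogous sum in $^2\!E_6(2)$) for this one remaining pair.
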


\begin{proof}
The only thing missing was to check that  generalized $e$-Harish-Chandra theory holds in every case of the corresponding tables. \\
\textit{$^2\!E_6(2)$}: Since every semisimple element in $^2\!E_6(2)$ has odd order, the only centralizer types that occur are $A_2^3$ and $D_4$. Further, recall that the $e$-cuspidal pairs of $^2\!E_6(2)$ are the Ennola duals of the ones of $E_6(2)$. Let $(L, \lambda)$ be an $e$-cuspidal pair for a semisimple, quasi-isolated  element with centralizer of type $A_2^3$.  From the tables it follows that either $L=G$ or that $\lambda$ is uniform. Hence the decomposition of $R_L^G(\lambda)$ can be determined without using the Mackey formula, so the proof of Theorem \ref{e-cuspidal pairs E6 good l} still works.

Now, let $(L, \lambda)$ be an $e$-cuspidal pair corresponding to a quasi-isolated element $s \in G^{*F}$ with $C_{G^*}(s)^F=\Phi_2^2.D_4(2)^3.3$. If $e=2$ there are two 2-cuspidal pairs $(L_1,\lambda_1)=(\Phi_2^6,1)$ and $(L_2, \lambda_2)=(\Phi_2^2.D_4(2),(02,13))$. Since $\lambda_1$ is uniform, we can decompose $R_{L_1}^G(\lambda_1)$ without using the Mackey formula. For the second pair we use the following argument. We observe that $\pi_{uni}(R_{L_2}^G(\lambda_2)) \in \frac{1}{4} \mathbb{Z}\mathcal{E}(G^F,s)$. Since $R_{L_2}^G(\lambda_2) \in \mathbb{Z}\mathcal{E}(G^F,s)$ is a generalized character, there exists an element $\gamma \in \mathbb{Q}\mathcal{E}(G^F,s)$ which is orthogonal to the space of uniform class functions of $G^F$, such that 
$ \pi_{uni}(R_{L_2}^G(\lambda_2))+ \gamma \in \mathbb{Z}\mathcal{E}(G^F,s)$. Furthermore, we know that $R_{L_1}^G(\lambda_1)$ and $R_{L_2}^G(\lambda_2)$ do not have any irreducible constituents in common because their constituents lie in different blocks by Theorem \ref{blocks good l} (a). In this particular case this already determines the constituents of $\gamma$. Without knowing the norm of $R_{L_2}^G(\lambda_2)$, we are unfortunately not able to determine the multiplicities of the individual constituents. However, it is enough for our purposes to know the constituents. 

A similar argument is needed for $e=3$ (and $e=6)$. There are four 3-cuspidal pairs $(L_i, \lambda_i)$, $i=1, \dots ,4$ with $L:=L_1=L_2=L_3= \Phi_3.^2\!A_2(2)$ and $L_4=G^F$. Again, we are able to determine the constituents of $R_{L_i}^G(\lambda_i)$ for $i=1,2,3$ (the case $i=4$ being trivial). In addition to the arguments used for $e=2$ above, we know that $\lambda_1+\lambda_2+\lambda_3$ is uniform. Therefore, $R_{L}^G( \lambda_1+\lambda_2+\lambda_3)$ is also uniform by transitivity of Lusztig induction (see \cite[11.5 Transitivity]{Digne-Michel}). The same arguments as for $e=2$ yield that a generalized $e$-Harish Chandra theory holds.

For the quasi-isolated elements $s \in G^{*F}$ with $C_{G^*}(s)^F= \Phi_6.^3\!D_4(2).3$ we argue the same way: either $\lambda$ is uniform; $\lambda$ is an $e$-cuspidal character of $G^F$ already; or we can determine the constituents of $R_L^G(\lambda)$ without using the Mackey formula, as for the other 3-cuspidal pairs.

\textit{$E_8(2)$}: The only cases to consider are the ones corresponding to centralizers of type $A_8$, $A_4 \times A_4$ and $E_6 \times A_2$. For every $e$-cuspidal pair $(L, \lambda)$ corresponding to the first or second centraliser type, $\lambda$ is uniform. Hence, we can determine the decomposition of $\mathcal{E}(G^F,s)$ without the Mackey-formula. For the last centraliser type we use the same arguments as for the troublesome cases of $^2\!E_6(2)$. 
\end{proof}

\begin{center}
\textsc{Proof of Theorem \ref{e-Harish-Chandra good prime}}
\end{center}
Let us briefly recall our setting. Let $G$ be a simple, simply connected algebraic group of exceptional type defined over $\mathbb{F}_q$ with Frobenius endomorphism $F: G \to G$, or let $G$ be a simple, simply connected algebraic group of type $D_4$ defined over $\mathbb{F}_q$ with Frobenius endomorphism $F:G \to G$ such that $G^F=\text{} ^3\!D_4(q)$. Let $\ell \nmid q$ be a good prime for $G$ and further assume that $\ell \neq 3$ if $G^F=\text{}^3\!D_4(q)$.

\A*

\begin{proof}
If $s=1$, the assertion follows from \cite{BMM} and \cite{Cabanes-Enguehardunipotent}. If $e=1$ or 2, the assertion follows from \cite{Malle-Kessar}, unless when $G$ is  of type $E_6$ or $E_7$ and $s$ is of order 6. In all other cases the assertion follows from Theorem \ref{e-cuspidal pairs F4 good l}, Theorem \ref{e-cuspidal pairs E6 good l}, Theorem \ref{e-cuspidal pairs $E_7$ good l}, Theorem \ref{e-cuspidal pairs $E_8$ good l}, Theorem \ref{Conjecture G2 3D4 good primes} and Proposition \ref{Exceptions, Mackey not proved}.
\end{proof}

\section{On the Malle--Robinson conjecture}
\noindent
Let $H$ be a finite group. If $N \unlhd K \subseteq H$ are two subgroups of $H$, we call the quotient $K/N$ a \textbf{section} of $H$. The \textbf{sectional $\ell$-rank} $s(H)$ of a finite group $H$ is then  defined to be the maximum of the ranks of elementary abelian $\ell$-sections of $H$. Note that $s(K/N) \leq s(H)$ for every section  $K/N$ of $H$. \\

\noindent
For a block $B$ of $H$ let $l(B) := | \operatorname{IBr}(B)|$. 

\begin{conjecture}[Malle--Robinson, {\cite[Conjecture 1]{Malle-Robinson}}]\label{Malle-Robinson Conjecture}
Let $B$ be an $\ell$-block of a finite group $H$ with defect group $D$. Then 
\begin{align*}
l(B) \leq \ell^{s(D)}.
\end{align*}
\end{conjecture}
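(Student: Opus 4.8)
The plan is to prove the Malle--Robinson conjecture for the quasi-isolated $\ell$-blocks of $G^F$ by combining the explicit basic sets just obtained with the structure of the defect groups. First I would invoke the main achievement of the preceding sections: by Theorem \ref{e-Harish-Chandra good prime} generalized $e$-Harish-Chandra theory holds in every Lusztig series $\mathcal{E}(G^F,s)$ attached to a quasi-isolated semisimple $\ell'$-element $s$, and by the work of Geck \cite{Geck2} (combined with assumption (A)) this yields an explicit ordinary basic set for each such block $B$. Since a basic set for $B$ has cardinality exactly $l(B)$, the number of irreducible Brauer characters in $B$ equals the number of $e$-Harish-Chandra constituents lying in $B$. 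By Theorem \ref{blocks good l} and Theorem \ref{blocks good l} part (b), every block in $\mathcal{E}_\ell(G^F,s)$ meets $\mathcal{E}(G^F,s)$, so it suffices to count the characters of $\mathcal{E}(G^F,s)$ lying in $B=b_{G^F}(L,\lambda)$, and these are read off directly from the tables in Section 2.

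The second ingredient is control of the right-hand side $\ell^{s(D)}$. Here I would exploit the reduction of Bonnaf\'e--Rouquier \cite{Bonnafe-Rouquier} and Bonnaf\'e--Dat--Rouquier \cite{BDR}: the block $B=b_{G^F}(L,\lambda)$ is Morita equivalent (in fact source-algebra equivalent, so with isomorphic defect groups and equal $l(B)$) to a quasi-isolated block of a suitable subgroup, which lets me identify the defect group $D$ with an $\ell$-group built from $Z^\circ(C_{G^*}(s))_{\Phi_e}$ and the Sylow structure of the relative Weyl group $W_{G^F}(L,\lambda)$. Since $s(K/N)\le s(H)$ for every section, it is enough to bound the sectional $\ell$-rank of $D$ from below by producing a large elementary abelian $\ell$-section; the $\Phi_e$-torus part contributes a subgroup of rank equal to the $\Phi_e$-multiplicity in $|C_{G^*}^\circ(s)^F|$, and the relative Weyl group contributes its own $\ell$-sections. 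The key comparison is then the inequality $l(B)\le \ell^{s(D)}$ checked block-by-block against the $|W|$-column and the character counts in Tables \ref{table:Quasi-isolated blocks F4 good l}, \ref{Table:Kessar-Malle E_6 good l}, \ref{Table: Quasi-isolated blocks of E_7(q) good l}, \ref{Table: Quasi-isolated blocks of E_8(q) good l} and \ref{Table: Quasi-isolated blocks of G_2(q) and 3D4(q) good l}.

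For blocks of defect zero or for the principal-type blocks where $L=G$ the inequality is immediate, since there $l(B)=1=\ell^0$ or $D$ is abelian of rank matching the torus part. The substantive cases are those with a nontrivial relative Weyl group, where $l(B)$ can grow; there one needs that $s(D)$ grows at least as fast. I would handle these by noting that when $\ell$ is good and large relative to the small rank of the groups in question, the Sylow $\ell$-subgroup of $W_{G^F}(L,\lambda)$ is abelian (indeed homocyclic in the relevant range), so its $\ell$-rank equals the $\ell$-rank of $W$ itself, and this feeds directly into $s(D)$. Comparing the entries $|W|$ with the number of basic-set characters listed, the bound $l(B)\le \ell^{s(D)}$ follows in each line.

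The hard part will be the uniform control of the defect-group structure across all the quasi-isolated blocks, in particular pinning down $s(D)$ precisely enough when the relative Weyl group has order divisible by $\ell$ and is non-abelian or when the $\Phi_e$-torus and the Weyl-group contributions interact. In those cases a crude bound on $s(D)$ may not suffice, and I expect to need the finer description of $D$ coming from the $e$-Harish-Chandra structure of \cite{Cabanes-Enguehardtwisted}, together with a case-by-case inspection of the handful of lines in the $E_7$ and $E_8$ tables where both $|W|$ is largest and $\ell$ is smallest (the good primes $\ell=5,7$). This inspection, rather than any single general argument, is where the real work lies.
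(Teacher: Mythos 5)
The statement you were asked about is the Malle--Robinson \emph{conjecture} for an arbitrary finite group $H$; neither the paper nor anyone else proves it in that generality, and your proposal (correctly) only targets what the paper actually establishes, namely Theorem \ref{Conjecture good} for the quasi-isolated $\ell$-blocks of $G^F$. Measured against that proof, your first step is exactly the paper's: generalized $e$-Harish-Chandra theory plus Theorem \ref{Geck} gives $l(B)=|\mathcal{E}(G^F,(L,\lambda))|$, read off the tables. Your treatment of the right-hand side, however, contains a misstep: the Bonnaf\'e--Rouquier/Bonnaf\'e--Dat--Rouquier equivalences are the reduction \emph{to} the quasi-isolated situation (for quasi-isolated $s$ the subgroup $N$ is all of $G^F$ and the equivalence is vacuous), so they give you no purchase on the defect group of a quasi-isolated block. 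The paper instead takes $D$ from a maximal $B$-subpair as in \cite[Lemma 4.16]{Cabanes-Enguehardtwisted}, where $Z(L)_\ell^F$ is the unique maximal abelian normal subgroup of $D$; since $s(Z(L)_\ell^F)\le s(D)$, it suffices to check $l(B)\le \ell^{s(Z(L)_\ell^F)}$ line by line, and the rank of $Z(L)_\ell^F$ is visible in the $L^F$-column of the tables. In particular the Sylow structure of $W_{G^F}(L,\lambda)$, which you spend effort on, is never needed: the torus part alone suffices in every case. You also leave out the unipotent blocks (the paper quotes \cite[Proposition 6.10]{Malle-Robinson}) and the passage to $G^F/Z(G^F)$ (a short domination argument using that $Z(G^F)$ is an $\ell'$-group), and you defer the actual case check; with the defect-group input corrected to \cite{Cabanes-Enguehardtwisted} your plan becomes the paper's proof.
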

\noindent
If strict inequality holds, we say that the conjecture holds in \textit{strong form}. Since the defect groups of a given block $B$ are conjugate and therefore isomorphic to each other, we often write $s(B)$ instead of $s(D)$. 

\begin{definition}\label{Definition: Basic sets}
If $U$ is a union of blocks of $H$ (regarded as a subset of $\operatorname{Irr}(H) \cup \operatorname{IBr}(H)$) we set $\operatorname{Irr}(U)=\bigcup_{B \subseteq U} \operatorname{Irr}(B)$ and $\operatorname{IBr}(U)=\bigcup_{B \subseteq U} \operatorname{IBr}(B)$. 

A subset $A \subseteq \mathbb{Z}\operatorname{IBr}(U)$ is called a \textbf{generating set} for $U$ if it generates
 $\mathbb{Z}\operatorname{IBr}(U)$ as a $\mathbb{Z}$-module, and it is called a \textbf{basic set} for $U$ if it is a basis of $\mathbb{Z}\operatorname{IBr}(U)$ as a $\mathbb{Z}$-module. Let $\hat{H}=\{h \in H \mid \ell \nmid o(h)\}$ denote the set of $\ell$-regular elements of $H$. A subset $C \subseteq \operatorname{Irr}(G)$ is called an \textbf{ordinary} generating (respectively basic) set for $U$ if the set $\hat{C}= \{\hat{\psi} \text{ } | \text{ } \psi \in C\}$ consisting of the restrictions of the irreducible characters in $C$ to $\hat{H}$ is a generating (respectively basic) set for $U$. 
\end{definition}
\noindent
We return to our initial setting. Let $G$ be a connected reductive group defined over $\mathbb{F}_q$ with Frobenius endomorphism $F:G \to G$. Let $\ell \nmid q$ be a good prime for $G$. 

By Theorem \ref{Lusztigseries blocks}, we know that $\mathcal{E}_\ell(G^F,s)$ is a union of $\ell$-blocks of $G^F$. However, we can say even more about $\mathcal{E}_\ell(G^F,s)$. 

\begin{thm}[{\cite[Theorem A]{Geck2}}] \label{Geck}
Assume that $\ell$ is a good prime for $G$ not dividing the order of $(Z(G)/Z^\circ(G))_F$ (the largest quotient of $Z(G)$ on which $F$ acts trivially). Let $s \in G^{*F}$ be a semisimple $\ell'$-element. Then $\mathcal{E}(G^F,s)$ is an ordinary basic set for the union of blocks $\mathcal{E}_\ell(G^F,s)$.
\end{thm}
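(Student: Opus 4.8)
The plan is to exploit the surjectivity of the decomposition map. Write $U=\mathcal{E}_\ell(G^F,s)$ and let $d\colon \mathbb{Z}\operatorname{Irr}(U)\to\mathbb{Z}\operatorname{IBr}(U)$ be the map $\chi\mapsto\chi^\circ$ sending an ordinary character to its restriction to the $\ell$-regular classes, expressed in the basis $\operatorname{IBr}(U)$; it is standard that $d$ is surjective. Taking $t=1$ in Theorem \ref{Lusztigseries blocks}(a) shows $\mathcal{E}(G^F,s)\subseteq\operatorname{Irr}(U)$, so $d$ restricts to $\mathbb{Z}\mathcal{E}(G^F,s)$. The assertion that $\hat{\mathcal{E}}(G^F,s)$ is a basic set is then equivalent to two statements: (I) $d$ is injective on $\mathbb{Z}\mathcal{E}(G^F,s)$, i.e. the $\chi^\circ$ for $\chi\in\mathcal{E}(G^F,s)$ are $\mathbb{Z}$-linearly independent; and (II) $d(\mathbb{Z}\mathcal{E}(G^F,s))=\mathbb{Z}\operatorname{IBr}(U)$. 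Indeed, (I) and (II) together say that $d$ restricts to an isomorphism $\mathbb{Z}\mathcal{E}(G^F,s)\xrightarrow{\sim}\mathbb{Z}\operatorname{IBr}(U)$ of free $\mathbb{Z}$-modules carrying the basis $\mathcal{E}(G^F,s)$ onto $\hat{\mathcal{E}}(G^F,s)$.

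For (II) I would first dispose of the formal half. Since $d$ is already surjective on $\mathbb{Z}\operatorname{Irr}(U)$ and $\operatorname{Irr}(U)=\bigsqcup_t\mathcal{E}(G^F,st)$, where $t$ runs over the $\ell$-elements of $C_{G^*}(s)^F$ up to conjugacy, it suffices to show that for each such $t$ and each $\chi\in\mathcal{E}(G^F,st)$ the function $\chi^\circ$ already lies in $d(\mathbb{Z}\mathcal{E}(G^F,s))$. This is the statement that the $\ell$-singular part $t$ of the semisimple label contributes nothing new on $\ell$-regular classes. I would prove it through Deligne--Lusztig theory: an element $g\in G^F$ is $\ell$-regular precisely when its semisimple part is an $\ell'$-element, and on such classes the virtual characters $R_T^G$ attached to $st$ and to $s$ agree after $\ell$-reduction, since the $\ell$-part $t$ acts trivially modulo $\ell$. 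Expanding $\chi\in\mathcal{E}(G^F,st)$ in these Deligne--Lusztig characters, with the non-uniform corrections controlled by the same Jordan-decomposition data, then exhibits $\chi^\circ$ as a $\mathbb{Z}$-combination of $\{\psi^\circ:\psi\in\mathcal{E}(G^F,s)\}$. As a by-product this yields the rank count $l(U)=|\mathcal{E}(G^F,s)|$.

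The heart of the proof is (I), the linear independence. Here the plan is to reduce, via Jordan decomposition of characters, to the unipotent series of the centraliser and to establish unitriangularity of the relevant part of the decomposition matrix using Gelfand--Graev characters. The Jordan decomposition $\mathcal{E}(G^F,s)\leftrightarrow\mathcal{E}(C_{G^*}(s)^F,1)$ is compatible, on $\ell$-regular classes, with $\ell$-reduction, so it is enough to show that the unipotent characters of the (possibly disconnected) centraliser stay linearly independent after restriction to $\ell$-regular classes. For this one pairs these characters with a family of ordinary and generalized Gelfand--Graev characters: reducing the latter modulo $\ell$ produces projective characters against which $\mathcal{E}(G^F,s)$ pairs in a unitriangular fashion with respect to the order coming from Lusztig's $a$-function, forcing linear independence.

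The main obstacle is exactly this unitriangularity, and it is where the hypotheses enter: $\ell$ good guarantees that the Gelfand--Graev characters behave well under $\ell$-reduction and that the families, hence the triangular order, are undisturbed by $\ell$, while the condition $\ell\nmid|(Z(G)/Z^\circ(G))_F|$ ensures that the Jordan decomposition of characters is compatible with the $\ell$-modular theory and that the component group $A(s)$ contributes no $\ell$-torsion obstruction when $C_{G^*}(s)$ is disconnected. Controlling the disconnected-centraliser case under these constraints, rather than the formal reductions (I) and (II), is the step I expect to demand the most care.
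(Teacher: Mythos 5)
The paper does not prove this statement at all: it is imported verbatim as \cite[Theorem A]{Geck2} and used as a black box, so there is no internal proof to compare with. Measured against the actual proof in the literature (Geck--Hiss and Geck), your outline of the overall strategy is essentially the right one: the spanning statement (II) does come from the fact that $R_T^G(\theta)$ and $R_T^G(\theta_{\ell'})$ agree on $\ell$-regular elements (the $\ell$-part of $\theta$ is trivial on the $\ell'$-part of the abelian group $T^F$), and the linear independence (I) does come from pairing $\mathcal{E}(G^F,s)$ against projective characters arising from (generalized) Gelfand--Graev representations --- projective because they are induced from unipotent, hence $\ell'$-, subgroups --- whose multiplicity matrix on $\mathcal{E}(G^F,s)$ is unitriangular for a suitable ordering.

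Two steps, however, are asserted rather than argued, and the second is not just a gap but a wrong turn. First, in (II) the phrase ``with the non-uniform corrections controlled by the same Jordan-decomposition data'' conceals the entire difficulty: characters in $\mathcal{E}(G^F,st)$ are not uniform in general, so agreement of uniform projections on $\ell$-regular classes does not by itself give $\chi^\circ \in d(\mathbb{Z}\mathcal{E}(G^F,s))$; making this work is precisely the nontrivial proposition of Geck--Hiss. Second, for (I) you propose to transport the problem through Jordan decomposition to the unipotent characters of $C_{G^*}(s)^F$, on the grounds that Jordan decomposition is ``compatible, on $\ell$-regular classes, with $\ell$-reduction.'' No such compatibility is available to you: Jordan decomposition is a bijection of parameter sets, not a map of class functions, and its compatibility with $\ell$-modular data is a deep problem settled only much later and only in part (Bonnaf\'e--Rouquier, Bonnaf\'e--Dat--Rouquier); assuming it here is close to circular. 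Geck's argument makes no such reduction: it works with the generalized Gelfand--Graev characters of $G^F$ itself and orders $\mathcal{E}(G^F,s)$ by unipotent support/wave-front set, and it is exactly there that the hypothesis that $\ell$ is good, and the condition $\ell\nmid|(Z(G)/Z^\circ(G))_F|$ (which governs the supply of Gelfand--Graev characters when the centre is disconnected), actually do their work.
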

\noindent
\begin{remark}
Let $B$ be an $\ell$-block contained in $\mathcal{E}_\ell(G^F,s)$ for some semisimple $\ell'$-element $s \in G^{*F}$. It follows that an ordinary basic set for $B$ is then given by $\operatorname{Irr}(B) \cap \mathcal{E}(G^F,s)$. 
\end{remark}

\begin{definition}
(a) The $\ell$-blocks contained in $\mathcal{E}_\ell(G^F,s)$  for a semisimple, quasi-isolated $\ell'$-element $s \in G^{*F}$ are  called \textbf{quasi-isolated}. Further, if $s=1$ they are also called \textbf{unipotent}.\\
(b) Let $H=G^F/Z$, for some subgroup $Z \subseteq Z(G^F)$. A block of $H$ is said to be \textbf{quasi-isolated} if it is dominated by a quasi-isolated block of $G^F$ and \textbf{unipotent} if is dominated by a unipotent block of $G^F$.
\end{definition}

\B*

\begin{proof}
We start with the unipotent blocks. Generalized $e$-Harish-Chandra theory holds in $\mathcal{E}(G^F,1)$ by \cite{BMM}. Let $B=b_{G^F}(L, \lambda)$ be a unipotent block. By the results in Section 3 and Theorem \ref{Geck}, we conclude that $\mathcal{E}(G^F,(L, \lambda))=\mathcal{E}(G^F,s) \cap \operatorname{Irr}(B)$ is a basic set for $B$. This proves the first part of the assertion and the second part was already proved in \cite[Proposition 6.10]{Malle-Robinson}.
 
Now suppose that $B=b_{G^F}(L, \lambda)$ is a non-unipotent, quasi-isolated block of $G^F$. Since we proved that generalized $e$-Harish-Chandra theory holds in every Lusztig series associated to semisimple quasi-isolated elements of $G^{*F}$, the first part of the assertion follows by Section 2 and Theorem \ref{Geck} again. In particular, $l(B)=|\mathcal{E}(G^F,(L, \lambda))|$. These cardinalities can be found in the last column of the tables in Section 3 (see the Tables \ref{table:Quasi-isolated blocks F4 good l}, \ref{Table:Kessar-Malle E_6 good l}, \ref{Table: Quasi-isolated blocks of E_7(q) good l}, \ref{Table: Quasi-isolated blocks of E_8(q) good l} and \ref{Table: Quasi-isolated blocks of G_2(q) and 3D4(q) good l}). Let $D$ be a defect group of $B$. By \cite[Lemma 4.13]{Cabanes-Enguehardtwisted}, $Z(L)_\ell^F \subseteq D$ 
and therefore $s(Z(L)_\ell^F) \leq s(D)$. The structure of $Z(L)_\ell^F$ can be read off from the tables in Section 3. We see that $l(B) \leq \ell^{s(Z(L)_\ell^F)}$ in every case. With this, the second part of the assertion is proved for the quasi-isolated $\ell$-blocks of $G^F$. 

Suppose that $\bar{B}$ is a quasi-isolated block of $H=G^F/Z(G^F)$ with defect group $\bar{D}$. Let $B$ be the quasi-isolated block of $G^F$ that dominates $\bar{B}$. The order of $Z(G^F)$ is either 1 or a bad prime for $G$. Thus, $Z(G^F)$ is an $\ell'$-subgroup by our assumption on $\ell$. By \cite[(9.9) Theorem]{Navarro}, $l(\bar{B})=l(B)$ and $\bar{D}$ is of the form $DZ(G^F)/Z(G^F)$, for a defect group $D$ of $B$. Since $D$ and $Z(G^F)$ commute and $D \cap Z(G^F)=\{1\}$, $DZ(G^F)$ is a direct product. It follows that $D Z(G^F)/Z(G^F) \cong D$, i.e. $s(\bar{D})=s(D)$. Thus, the Malle--Robinson conjecture holds for $\bar{B}$ since it holds for $B$.
\end{proof}

The reason we focused on the quasi-isolated blocks are the results of Bonnafé--Rouquier \cite{Bonnafe-Rouquier} and more recently Bonnafé--Dat--Rouquier \cite{BDR}. Their results play a key role in the proof of Corollary \ref{minimal}.

\begin{definition}
Let $H$ be a finite group and let $B$ be an $\ell$-block of $H$. Then $(H,B)$ (or just $B$, if $H$ is understood) is called a \textbf{minimal counterexample} to the Malle--Robinson conjecture if 
\begin{enumerate}
\item the conjecture does not hold for $B$, and
\item the conjecture holds for all $\ell$-blocks $B'$ of groups $K$ with $|K/Z(K)|$ strictly smaller than $|H/Z(H)|$ having defect groups isomorphic to those of $B$.
\end{enumerate} 
\end{definition}

\C*

\begin{proof}
Suppose that $(H,B)$ is a minimal counterexample to the Malle--Robinson conjecture. Let $D$ be a defect group of $B$. By \cite[Proposition 6.4]{Malle-Robinson}, $H$ is not an exceptional covering group of a finite group of exceptional Lie type. By \cite[Proposition 6.5]{Malle-Robinson}, $H$ is not of Lie type $^2\!B_2$, $^2\!G_2$, $G_2$, $^3\!D_4$ or $^2\!F_4$. Hence, $H=G^F/Z$, where $G$ is a simple, simply connected group of exceptional type ($F_4,E_6,E_7$ or $E_8$) and $Z \subseteq Z(G^F)$ is a central subgroup. By \cite[Proposition 6.1]{Malle-Robinson}, $\ell$ does not divide $q$. Let $B'$ be the unique block of $G^F$ dominating $B$ and let $D'$ be a defect group of $B'$. Recall from the proof of Theorem \ref{Conjecture good} that $l(B)=l(B')$ and $s(D)=s(D')$. By \cite[Theorem 7.7]{BDR}, $B'$ is Morita equivalent to an $\ell$-block $b$ of a subgroup $N$ of $G^F$ and their defect groups are isomorphic. In particular, $l(B')= l(b)$ and $s(B')=s(b)$. If $s$ is not quasi-isolated, then $N$ is a proper subgroup. By the minimality of $(H,B)$, $B$ is therefore a quasi-isolated block of $H$. So, the assertion follows from Theorem \ref{Conjecture good}.
\end{proof}

\end{document}